\newcommand{\fWAP}{\mathsf{WAP}}
\newcommand{\fSAP}{\mathsf{SAP}}
\newcommand{\cMS}{\mathcal{S}_{\widehat{\mathcal{M}}}}
\newcommand\reallywidehat[1]{%
\savestack{\tmpbox}{\stretchto{%
  \scaleto{%
    \scalerel*[\widthof{\ensuremath{#1}}]{\kern-.6pt\bigwedge\kern-.6pt}%
    {\rule[-\textheight/2]{1ex}{\textheight}}%WIDTH-LIMITED BIG WEDGE
  }{\textheight}%
}{0.5ex}}%
\stackon[1pt]{#1}{\tmpbox}%
}
\newcommand\reallywidecheck[1]{%
\savestack{\tmpbox}{\stretchto{%
  \scaleto{%
    \scalerel*[\widthof{\ensuremath{#1}}]{\kern-.6pt\bigwedge\kern-.6pt}%
    {\rule[-\textheight/2]{1ex}{\textheight}}%WIDTH-LIMITED BIG WEDGE
  }{\textheight}%
}{0.5ex}}%
\stackon[1pt]{#1}{\scalebox{-1}{\tmpbox}}%
}
\numberwithin{equation}{section}
\newcommand{\Z}{{\mathbb Z}}
\newcommand{\R}{{\mathbb R}}
\newcommand{\N}{{\mathbb N}}
\newcommand{\C}{{\mathbb C}}
\newcommand{\im}{{\mathrm{i}}}
\newcommand{\supp}{{\operatorname{supp}}}
\newcommand{\e}{\operatorname{e}}
\newcommand{\dd}{\mbox{d}}
\newcommand{\cM}{{\mathcal M}}
\newcommand{\cF}{{\mathcal F}}
\newcommand{\cS}{{\textsf S}}
\newcommand{\WAP}{\mathcal{W}\hspace*{-1pt}\mathcal{AP}}
\newcommand{\SAP}{\mathcal{S}\hspace*{-2pt}\mathcal{AP}}
\newcommand{\lm}{\ensuremath{\lambda\!\!\!\lambda}}
\newcommand{\Cu}{C_{\mathsf{u}}}
\newcommand{\Cc}{C_{\mathsf{c}}}
\newcommand{\Cz}{C^{}_{0}}
\theoremstyle{plain}
\newtheorem{theorem}{Theorem}[section]
\newtheorem{proposition}[theorem]{Proposition}
\newtheorem{lemma}[theorem]{Lemma}
\newtheorem{corollary}[theorem]{Corollary}
\newtheorem{fact}[theorem]{Fact}
\theoremstyle{definition}
\newtheorem{definition}[theorem]{Definition}
\newtheorem{remark}[theorem]{Remark}
\newtheorem{example}[theorem]{Example}
\newtheorem{question}[theorem]{Question}
\begin{document}
\title[Translation bounded Fourier transform]{Tempered distributions with translation bounded measure as Fourier transform and the generalized Eberlein decomposition}

\author{Timo Spindeler}
\address{Fakult\"at f\"ur Mathematik, Universit\"at Bielefeld, \newline
\hspace*{\parindent} Postfach 100131, 33501 Bielefeld, Germany}
\email{tspindel@math.uni-bielefeld.de}

\author{Nicolae Strungaru}
\address{Department of Mathematical Sciences, MacEwan University \newline
\hspace*{\parindent} 10700 -- 104 Avenue, Edmonton, AB, T5J 4S2, Canada\\
and \\
Institute of Mathematics ``Simon Stoilow''\newline
\hspace*{\parindent}Bucharest, Romania}
\email{strungarun@macewan.ca}
\urladdr{http://academic.macewan.ca/strungarun/}

\begin{abstract}
In this paper, we study the class of tempered distributions whose Fourier transform is a translation bounded measure and show that each such distribution in $\R^d$ has order at most $2d$.
We show the existence of the generalized Eberlein decomposition within this class of distributions, and its compatibility with
all previous Eberlein decompositions. The generalized Eberlein decomposition for Fourier transformable measures and properties of its components are discussed. Lastly, we take a closer look at the absolutely continuous spectrum of measures supported on Meyer sets.
\end{abstract}

\keywords{Fourier Transform of measures, Almost periodic measures, Lebesgue decomposition}

\subjclass[2010]{43A05, 43A25, 52C23}

\maketitle

\section{Introduction}

Diffraction is one of the main techniques used by physicists to study the atomic structure of materials. It is used in material sciences to determine the arrangements of atoms in
metals and crystals and in biology to determine the structure of organic compounds.

Mathematically, the process of diffraction is modeled as follows: starting with a set $\Lambda \subset \R^d$, representing the position of atoms in the structure, or more generally a (translation bounded) measure $\mu$, one can construct a measure $\gamma$ called the autocorrelation (or 2-point correlation) of $\Lambda$ or $\mu$ (see \cite{TAO} for the general background). The measure $\gamma$ is positive definite and hence Fourier transformable \cite{ARMA1,BF,MoSt}, and its Fourier transform $\widehat{\gamma}$ is a positive measure \cite{ARMA1,BF,MoSt}. It is this measure $\widehat{\gamma}$ which models the outcome of a physical diffraction experiment, and for this reason it is called the diffraction measure of $\Lambda$ (or $\mu$). It has a decomposition
\[
\widehat{\gamma}=\left(\widehat{\gamma}\right)_{\operatorname{pp}}+\left(\widehat{\gamma}\right)_{\operatorname{ac}}+\left(\widehat{\gamma}\right)_{\operatorname{sc}} \,,
\]
into positive pure point, absolutely continuous and singular continuous measures, respectively.

The pure point spectrum $\left(\widehat{\gamma}\right)_{\operatorname{pp}}$ is usually associated to the order present in the structure. Structures with pure point diffraction, meaning $\widehat{\gamma}=\left(\widehat{\gamma}\right)_{\operatorname{pp}}$, have been of special interest to physicists and mathematicians. While for a log time pure point diffraction has been associated with periodicity, the discovery of quasicrystals \cite{She} has led to a paradigm shift and showed that there is much we do not understand about physical diffraction. It is now known that pure point diffraction is equivalent to the mean almost periodicity of the structure \cite{LSS,LSS2}.

Over the years, it has become clear that structures with non-trivial continuous spectrum can also have interesting properties, and hence are of interest. While the case of structures with pure point diffraction is very well understood now \cite{LSS,LSS2}, the study of systems with continuous or mixed spectrum is just in its infancy. The singular continuous diffraction spectrum $\left(\widehat{\gamma}\right)_{\operatorname{sc}}$ in particular is very mysterious and very little is known about it in general.

\smallskip

One of the most useful tools in the study of the components of the diffraction is the Eberlein decomposition (see below for definition and properties). Given a measure $\mu$ with autocorrelation $\gamma$, there exists a unique decomposition (see Theorem~\ref{prop ebe} and Theorem~\ref{t1} below)
\[
\gamma=\gamma_{\operatorname{s}}+\gamma_0 \,,
\]
called the Eberlein decomposition, with the property that both $\gamma_{\operatorname{s}}$ and $\gamma_0$ are positive definite measures and
\[
\widehat{\,\gamma_{\operatorname{s}}\,}=\left(\widehat{\gamma}\right)_{\operatorname{pp}} \qquad \text{ and } \qquad \widehat{\,\gamma_{0}\,}=\left(\widehat{\gamma}\right)_{\operatorname{c}} \,.
\]
This allows us to study the pure point diffraction spectrum $(\widehat{\gamma})_{\operatorname{pp}}$ and continuous diffraction spectrum $(\widehat{\gamma})_{\operatorname{c}}$ of $\mu$, respectively, in the Fourier dual space by studying instead the components $\gamma_{\operatorname{s}}$ and $\gamma_0$, respectively, of the autocorrelation measure $\gamma$ in the real space. This approach has proven effective in the study of diffraction of measures with Meyer set support \cite{NS1,NS5,NS11,NS20a,NS21} and in the study of compatible random substitution \cite{BSS}.
Because of this, one would like to further decompose
\[
\gamma_{0}= \gamma_{0a}+\gamma_{0s} \,,
\]
such that
\[
\reallywidehat{\gamma_{\operatorname{0s}}}= \left( \widehat{\gamma} \right)_{\operatorname{sc}}  \qquad \text{ and } \qquad  \reallywidehat{\gamma_{\operatorname{0a}}}= \left( \widehat{\gamma} \right)_{\operatorname{ac}}  \,.
\]
Whenever this is possible, we will refer to
\begin{equation}\label{EQged}
\gamma =\gamma_{\operatorname{s}}+\gamma_{0a}+\gamma_{0s}
\end{equation}
as the complete (or generalized) Eberlein decomposition of $\gamma$.

\smallskip

The generalized Eberlein decomposition exists for Fourier transformable measures with Meyer set support \cite{NS20a,NS21}. This result has important consequences for one-dimensional Pisot substitutions, see for example \cite{BaGa,BG2}, and we expect that establishing the existence of the generalized Eberlein decomposition may have important consequences in general. This makes it an important open problem for diffraction theory. Let us state this explicitly.

\begin{question}
Given a positive definite (or more generally a Fourier transformable) measure $\gamma$ on $\R^d$, can one find a decomposition as in \eqref{EQged} which is the Fourier dual to the Lebesgue decomposition of $\widehat{\gamma}\, $?
\end{question}

It is the main goal of the this paper to show that each Fourier transformable measure $\gamma$ on $\R^d$ has a generalized Eberlein decomposition \eqref{EQged} with $\gamma_{0a}, \gamma_{0s}$ tempered distributions of order $2d$.

Our approach is as follows. Since for a Fourier transformable measure $\gamma$, the measure $\widehat{\gamma}$ is translation bounded \cite[Thm. 4.9.23]{MoSt}, we restrict our attention to the class $\mathcal{DTBM}(\R^d)$ of tempered distributions  whose Fourier transform is a translation bounded measure. The Fourier transform then becomes a bijection between $\mathcal{DTBM}(\R^d)$ and $\cM^\infty(\R^d)$. So, we will first study the properties of the space $\mathcal{DTBM}(\R^d)$. In the first main result of the paper, Theorem~\ref{Prop 2}, we show that for each $\omega \in \mathcal{DTBM}(\R^d)$ there exists some $f \in \Cu(\R^d)$ such that
\[
\omega=\left((\partial_{x_1})^{2d}+\ldots+ (\partial_{x_d})^{2d}+(-1)^d\right) (f \lm)
\]
in the sense of distributions. In particular, it follows that each element $\omega \in \mathcal{DTBM}(\R^d)$ is a distribution of order (at most) $2d$.

Next, we prove in Proposition~\ref{gen ebe dist} that each distribution $\omega \in \mathcal{DTBM}(\R^d)$ admits a generalized Eberlein decomposition
\[
\omega=\omega_{\operatorname{s}}+\omega_{\operatorname{0a}}+ \omega_{\operatorname{0s}} \,,
\]
with $\omega_{\operatorname{s}},\omega_{\operatorname{0a}}, \omega_{\operatorname{0s}}\in \mathcal{DTBM}(\R^d)$, which is in Fourier duality with the Lebesgue decomposition.
By combining these results, we prove in Theorem~\ref{coro:mainrd} that each Fourier transformable measure $\gamma$ on $\R^d$ admits a generalized Eberlein decomposition
\[
\gamma=\gamma_{\operatorname{s}}+\omega_{\operatorname{0a}}+ \omega_{\operatorname{0s}} \,,
\]
where $\gamma_{\operatorname{s}}$ and $\gamma_{0}:=\omega_{\operatorname{0a}}+ \omega_{\operatorname{0s}}$ are Fourier transformable measures and $\omega_{\operatorname{0a}}$, $\omega_{\operatorname{0s}}$ are tempered distributions of order (at most) 2d.

We complete the paper by taking a closer look in Section~\ref{sect last} at those distributions in $\mathcal{DTBM}(\R^d)$ whose Fourier transform is pure point, continuous or absolutely continuous measure, respectively.

In general, given a positive definite measure $\gamma$, one would hope all three components of the generalized Eberlein decomposition are measures. It is not clear
if this is the case. More importantly, one can define the diffraction of more general processes (see for  example \cite{LM} and \cite[Sect.~1.4 and Thm.~3.28]{LSS}) in which case the autocorrelation is not necessarily a measure but the diffraction is a translation bounded measure. In this case, the autocorrelation can be viewed as an element in $\mathcal{DTBM}(\R^d)$, and hence our approach can be used in the study of such more general objects.

\smallskip

Lastly, let us mention here that some of the results this paper can be extended to the space $\cMS(\R^d)$ of tempered distributions with measure Fourier transform, which was introduced and studied in \cite{ST}. However, in diffraction theory we only deal with measures whose Fourier transform is a translation bounded measure, and as translation boundedness gives more properties and simplifies many proofs (see for example Theorem~\ref{Prop 2}), we will restrict to $\mathcal{DTBM}(\R^d)$ in this article.

\section{Preliminaries}

Throughout this paper, we will work in the $d$-dimensional Euclidean space $\R^d$. The Lebesgue measure will be denoted by $\lm$ or simply $\dd x$. We will denote by $\Cu(\R^d)$, $\Cc(\R^d)$, $\cS(\R^d)$ and $\Cc^{\infty}(\R^d)$, the spaces of uniformly continuous and bounded functions, compactly supported continuous functions, Schwartz functions and  compactly supported functions which are arbitrarily often differentiable, respectively. For any function $g$ on $\R^d$ and $t \in \R^d$, the functions $T_tg, \widetilde{g}$ and $g^{\dagger}$ are defined by
\[
(T_tg)(x):=g(x-t)\,, \qquad \widetilde{g}(x):=\overline{g(-x)} \qquad \text{ and } \qquad g^{\dagger}(x):=g(-x) \,.
\]

A \emph{measure} $\mu$ on $\R^d$ is a linear functional on $C_{\text{c}}(\R^d)$ such that, for every compact subset $K\subset \R^d$, there is a constant $a_K>0$ with
\[
|\mu(\varphi)| \leqslant a_{K}\, \|\varphi\|_{\infty}
\]
for all $\varphi\in C_{\text{c}}(\R^d)$ with $\supp(\varphi) \subseteq K$. Here, $\|\varphi\|_{\infty}$ denotes the supremum norm of $\varphi$. By Riesz' representation theorem, this definition is equivalent to the classical measure theory concept of regular Radon measure.

Similar to functions, for a measure $\mu$ on $\R^d$ and $t \in \R^d$, we define $T_t\mu, \widetilde{\mu}$ and $\mu^{\dagger}$ by
\[
(T_t\mu)(\varphi):= \mu(T_{-t}\varphi)\,, \qquad \widetilde{\mu}(\varphi):=\overline{ \mu (\widetilde{\varphi})} \qquad  \text{ and } \quad
\mu^{\dagger}(\varphi):= \mu(\varphi^{\dagger}).
\]

\smallskip
Given a measure $\mu$, there exists a positive measure $| \mu|$ such that, for all $\psi \in \Cc(\R^d)$ with $\psi \geqslant 0$, we have \cite{Ped} (compare \cite[Appendix]{CRS2})
\[
| \mu| (\psi)= \sup \{ \left| \mu (\varphi) \right| \ :\ \varphi \in \Cc(\R^d),\,  |\varphi| \leqslant \psi \} \,.
\]
The measure $| \mu|$ is called the \emph{total variation of} $\mu$.

The measure $\mu$ is called \emph{translation bounded} if for all compact sets $K \subset \R^d$ we have
\[
\| \mu \|_{K}:=\sup_{t \in \R^d} \left| \mu \right|(t+K) < \infty \,.
\]
We will denote the space of translation bounded measures by $\cM^\infty(\R^d)$.

\begin{remark}
\begin{itemize}
\item[(a)] A measure $\mu$ is translation bounded if and only if for all $\varphi \in \Cc(\R^d)$ we have \cite{ARMA1,MoSt} $\varphi*\mu \in \Cu(\R^d)$.
Here
\[
(\varphi*\mu)(x):= \int_{\R^d} \varphi(x-y)\ \dd \mu(y) \,.
\]
\item[(b)] A measure $\mu$ is translation bounded if and only if $\| \mu \|_{U} < \infty$ for a single pre-compact Borel set with non-empty interior \cite{BM,SS2}. Moreover, in this case, two different pre-compact Borel sets with non-empty interior define equivalent norms.
\end{itemize}
\end{remark}

Since $\Cc^\infty(\R^d) \subseteq \Cc(\R^d)$, each measure is a distribution of order 0. A measure $\mu$ is called a \emph{tempered measure} if $\mu : \Cc^\infty(\R^d) \to \R^d$ is the restriction to $\Cc^\infty(\R^d)$ of a tempered distribution $\omega : \cS(\R^d) \to \C$. It is easy to see that this is equivalent to $\mu : \Cc^\infty(\R^d) \to \C$ being continuous with respect to the Schwartz topology induced by the embedding $\Cc^\infty(\R^d) \hookrightarrow \cS(\R^d)$ (see for example \cite{ARMA1}).

\begin{remark}
A positive measure is tempered if and only if it is slowly increasing \cite[p.~242]{sch} \cite[Thm.~2.1]{Kaba}, meaning that there exists a polynomial $P \in \R[X_1,\ldots,X_d]$ such that
\[
\int_{\R^d} \frac{1}{1+|P(x)|}\ \dd  \mu(x) < \infty \,.
\]
If $\mu$ is a measure such that $|\mu|$ is slowly increasing, then $\mu$ is tempered \cite[p. 47]{ARMA1}. In this case, we say that $\mu$ is tempered in strong sense.

Note that there is an example of a tempered measure $\mu$ such that $|\mu|$ is not tempered \cite{ARMA1}. In particular, by the above, this measure is tempered but not tempered in strong sense.
\end{remark}

Any translation bounded measure is slowly increasing and hence tempered \cite[Thm.~7.1]{ARMA1}.

\smallskip

The basic tool we will use in this project is the following estimate. As this estimate gives an alternate proof that every translation bounded measure is slowly increasing, we include it here.

\begin{lemma} \label{lem:distr_2}
Let $\nu\in\mathcal{M}^{\infty}(\R^d)$. Then, there is a constant $c>0$ such that
\[
\int_{\R^d} \frac{1}{(2\pi x_1)^{2d}+\ldots+(2\pi x_d)^{2d}+1}\, \dd|\nu|(x) \leqslant c\, \|\nu\|_{[-\frac{1}{2},\frac{1}{2}]^d} < \infty \,.
\]
In particular,
\[
\frac{1}{(2\pi\im (\cdot)_1)^{2d}+\ldots+(2\pi\im (\cdot)_d)^{2d}+(-1)^d}\, \nu
\]
is a finite measure on $\R^d$.
\end{lemma}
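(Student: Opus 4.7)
The plan is to tile $\R^d$ by the unit cubes $C_k := k+[-\tfrac12,\tfrac12]^d$ for $k\in\Z^d$, split the integral along this tiling, and on each cube control the integrand from above by a constant depending only on $k$, while controlling $|\nu|(C_k)$ uniformly by $\|\nu\|_{[-1/2,1/2]^d}$.

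First I would observe that for $x\in C_k$ and each coordinate $i$ with $|k_i|\geqslant 1$ one has $|x_i|\geqslant |k_i|-\tfrac12\geqslant \tfrac12 |k_i|$, hence $(2\pi x_i)^{2d}\geqslant \pi^{2d}|k_i|^{2d}$. Therefore, on $C_k$,
\[
(2\pi x_1)^{2d}+\ldots+(2\pi x_d)^{2d}+1\;\geqslant\; 1+\pi^{2d}\sum_{i=1}^{d}|k_i|^{2d}\;=:\;D_k.
\]
Combined with translation boundedness $|\nu|(C_k)\leqslant \|\nu\|_{[-1/2,1/2]^d}$, this gives
\[
\int_{\R^d}\frac{\dd|\nu|(x)}{(2\pi x_1)^{2d}+\ldots+(2\pi x_d)^{2d}+1}\;\leqslant\;\|\nu\|_{[-1/2,1/2]^d}\sum_{k\in\Z^d}\frac{1}{D_k}.
\]

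Next I would verify that $S:=\sum_{k\in\Z^d}D_k^{-1}<\infty$. Grouping by $n:=\max_i|k_i|$, the number of lattice points with $\max_i|k_i|=n$ is $O(n^{d-1})$, while $D_k\geqslant 1+\pi^{2d}n^{2d}$. Hence
\[
S\;\leqslant\;1+C\sum_{n\geqslant 1}\frac{n^{d-1}}{1+\pi^{2d}n^{2d}}\;=\;1+O\Bigl(\sum_{n\geqslant 1}n^{-d-1}\Bigr)\;<\;\infty,
\]
since $d\geqslant 1$. Setting $c:=S$ then yields the asserted estimate.

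For the ``in particular'' assertion, I would rewrite the denominator using $(2\pi\im x_j)^{2d}=(\im)^{2d}(2\pi x_j)^{2d}=(-1)^d(2\pi x_j)^{2d}$, so that
\[
(2\pi\im x_1)^{2d}+\ldots+(2\pi\im x_d)^{2d}+(-1)^d\;=\;(-1)^d\bigl((2\pi x_1)^{2d}+\ldots+(2\pi x_d)^{2d}+1\bigr).
\]
The absolute value of its reciprocal is precisely the integrand in the first display, and finiteness of that integral means exactly that the indicated product is a finite (complex) measure on $\R^d$. I do not anticipate a serious obstacle here; the only delicate point is justifying the cube-by-cube upper bound and the convergence exponent, and both follow from the elementary coordinatewise estimate above together with the standard lattice point count.
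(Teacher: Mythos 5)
Your proof is correct, and it follows the same skeleton as the paper's: tile $\R^d$ by the unit cubes $k+[-\tfrac12,\tfrac12]^d$, use translation boundedness to bound $|\nu|$ of each cube uniformly by $\|\nu\|_{[-1/2,1/2]^d}$, and reduce everything to the convergence of the deterministic series $\sum_{k\in\Z^d}\sup_{x\in C_k}\bigl((2\pi x_1)^{2d}+\ldots+(2\pi x_d)^{2d}+1\bigr)^{-1}$. Where you differ is in how that series is summed. The paper splits $\Z^d$ into the finitely many indices with some $k_j\leqslant 1$ and the rest, applies H\"older's inequality to get $\bigl(\sum_j(2\pi x_j)^2\bigr)^d\leqslant d^{d-1}\sum_j(2\pi x_j)^{2d}$, and then dominates the remaining sum by a product of $d$ one-dimensional convergent series. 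You instead drop all but the largest coordinate, bounding the denominator below by $1+\pi^{2d}\bigl(\max_i|k_i|\bigr)^{2d}$, and conclude by the shell count $O(n^{d-1})$ for $\max_i|k_i|=n$, giving a series $O\bigl(\sum_n n^{-d-1}\bigr)$. Your route avoids the H\"older step entirely and is arguably the more elementary of the two; the paper's product decomposition has the mild advantage of yielding a fully explicit constant $c_1+c_2$ without invoking a lattice-point count. Both arguments handle the ``in particular'' claim identically, via $(2\pi\im x_j)^{2d}=(-1)^d(2\pi x_j)^{2d}$, so the two denominators agree up to the sign $(-1)^d$ and the finiteness of the integral is exactly the finiteness of the rescaled measure.
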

\begin{proof}
We first note that
\begin{equation} \label{eq:c_n}
\begin{aligned}
{}
    &\int_{\R^d} \frac{1}{(2\pi x_1)^{2d}+\ldots+(2\pi x_d)^{2d}+1}
       \, \dd|\nu|(x)  \\
    &\phantom{XXXXXX}=\sum_{k\in\Z^d} \int_{k+[-\frac{1}{2},\frac{1}{2}]^d}
       \frac{1}{(2\pi x_1)^{2d}+\ldots+(2\pi x_d)^{2d}+1}\, \dd|\nu|(x)   \\
    &\phantom{XXXXXX}\leqslant \|\nu\|_{[-\frac{1}{2},\frac{1}{2}]^d}
      \sum_{k\in\Z^d} \sup_{x\in k+[-\frac{1}{2},\frac{1}{2}]^d}
      \frac{1}{(2\pi x_1)^{2d}+\ldots+(2\pi x_d)^{2d}+1}   \,.
\end{aligned}
\end{equation}
Now, let $M:=\{k\in\Z^d\,:\, k_j\geqslant 2 \text{ for all } 1\leqslant j\leqslant d\}$. On the one hand, since $\Z^d\setminus M$ is finite, one has
\begin{equation} \label{eq:c_1}
c_1:=\sum_{k\in\Z^d\setminus M} \sup_{x\in k+[-\frac{1}{2},\frac{1}{2}]^d}
 \frac{1}{(2\pi x_1)^{2d}+\ldots+(2\pi x_d)^{2d}+1}  <\infty \,.
\end{equation}
Also note that for $d >1$, by H\"older inequality, we have
\[
\sum_{j=1}^d (2 \pi x_j)^2 \cdot 1 \leqslant \left( \sum_{j=1}^d (2 \pi x_j)^{2d} \right)^\frac{1}{d}  \left( \sum_{j=1}^d 1^q \right)^\frac{1}{q}  \,,
\]
where $q=\frac{d}{d-1}$ is the conjugate of $d$.
Therefore, one has
\begin{equation}\label{eq2}
\left( (2 \pi x_1)^2+ (2 \pi x_2)^2+ \ldots + (2 \pi x_d)^2 \right)^d \leqslant \left( (2 \pi x_1)^{2d}+ (2 \pi x_2)^{2d}+ \ldots + (2 \pi x_d)^{2d} \right) d^{d-1} \,.
\end{equation}
Moreover, \eqref{eq2} trivially holds when $d=1$.
Therefore, we obtain
\begin{equation} \label{eq:c_2}
\begin{aligned}
c_2
    &:=\sum_{k\in M} \sup_{x\in k+[-\frac{1}{2},\frac{1}{2}]^d}
        \frac{1}{(2\pi x_1)^{2d}+\ldots+(2\pi x_d)^{2d}+1}  \\
    &\overset{\eqref{eq2}}{\leqslant} \sum_{k\in M}
     \sup_{x\in k+[-\frac{1}{2},\frac{1}{2}]^d}
        \frac{d^{d-1}}{((2\pi x_1)^{2}+\ldots+(2\pi x_d)^{2})^d}  \\
    &\leqslant \sum_{|k_1|\geqslant 2}\cdots \sum_{|k_d|\geqslant 2}
      \left(\frac{d}{(2\pi (|k_1|-1))^{2}+\ldots+(2\pi (|k_d|-1))^{2}}
      \right)^d      \\
    &\leqslant \left(\sum_{|k_1|\geqslant 1} \frac{d}{(2\pi k_1)^2} \right) \cdot
       \ldots \cdot \left(\sum_{|k_d|\geqslant 1} \frac{d}{(2\pi k_d)^2} \right)
       < \infty \,.
\end{aligned}
\end{equation}
Finally, Eqs.~\eqref{eq:c_n}, \eqref{eq:c_1} and \eqref{eq:c_2} give
\[
\int_{\R^d} \frac{1}{(2\pi x_1)^{2d}+\ldots+(2\pi x_d)^{2d}+1}\, \dd|\nu|(x) \leqslant \|\nu\|_{[-\frac{1}{2},\frac{1}{2}]^d}\, (c_1+ c_2) \,.
\]
The claim follows with $c:=c_1+ c_2$.
\end{proof}

As immediate consequences of Lemma~\ref{lem:distr_2}, we obtain the following results.

\begin{corollary}
Let $\nu\in\mathcal{M}^{\infty}(\R^d)$. Then,
\[
\frac{1}{\left( (2\pi\im (\cdot)_1)^{2}+\ldots+(2\pi\im (\cdot)_d)^{2}\right)^d+(-1)^d}\, \nu
\]
is a finite measure on $\R^d$.
\end{corollary}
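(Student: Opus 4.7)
The plan is to reduce this statement to Lemma~\ref{lem:distr_2} by comparing the two denominators. First I would simplify the imaginary expression. Since $(2\pi\im x_j)^2=-(2\pi x_j)^2$, one has
\[
\bigl((2\pi\im x_1)^{2}+\ldots+(2\pi\im x_d)^{2}\bigr)^d+(-1)^d
 =(-1)^d\Bigl[\bigl((2\pi x_1)^2+\ldots+(2\pi x_d)^2\bigr)^d+1\Bigr],
\]
so its modulus equals $((2\pi x_1)^2+\ldots+(2\pi x_d)^2)^d+1$, and finiteness of the claimed measure is equivalent to finiteness of the integral of $1/(((2\pi x_1)^2+\ldots+(2\pi x_d)^2)^d+1)$ against $|\nu|$.

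Next I would establish the elementary inequality: for $u_1,\ldots,u_d\geqslant 0$,
\[
u_1^d+\ldots+u_d^d \;\leqslant\; \bigl(u_1+\ldots+u_d\bigr)^d,
\]
which is immediate from the multinomial expansion of the right-hand side, whose cross terms are all nonnegative and whose diagonal terms are precisely $u_j^d$. Applying this with $u_j=(2\pi x_j)^2$ gives
\[
(2\pi x_1)^{2d}+\ldots+(2\pi x_d)^{2d}+1 \;\leqslant\; \bigl((2\pi x_1)^2+\ldots+(2\pi x_d)^2\bigr)^d+1,
\]
hence the pointwise bound
\[
\frac{1}{\bigl((2\pi x_1)^2+\ldots+(2\pi x_d)^2\bigr)^d+1}
 \;\leqslant\;
\frac{1}{(2\pi x_1)^{2d}+\ldots+(2\pi x_d)^{2d}+1}.
\]

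Finally I would integrate this inequality against $|\nu|$ and invoke Lemma~\ref{lem:distr_2}, which guarantees that the right-hand side has finite integral bounded by $c\,\|\nu\|_{[-\frac12,\frac12]^d}$. This shows the measure in question has finite total variation, completing the proof. There is no genuine obstacle here: the only subtlety is noticing that the comparison between the two denominators goes in the right direction, which is handled by the multinomial-expansion inequality above (essentially the reverse direction of inequality \eqref{eq2} used in the lemma).
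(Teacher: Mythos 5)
Your argument is correct and follows the same route as the paper: the paper also deduces the corollary from Lemma~\ref{lem:distr_2} together with the pointwise inequality
\[
\frac{1}{\left( (2\pi x_1)^{2}+\ldots+(2\pi x_d)^{2}\right)^d+1} \leqslant  \frac{1}{(2\pi x_1)^{2d}+\ldots+(2\pi x_d)^{2d}+1}\,,
\]
which it states without proof. Your multinomial-expansion justification of that inequality and the explicit handling of the $(-1)^d$ factor are correct additions of detail, not a different method.
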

\begin{proof}
This follows immediately from Lemma~\ref{lem:distr_2} and the inequality
\[
\frac{1}{\left( (2\pi x_1)^{2}+\ldots+(2\pi x_d)^{2}\right)^d+1} \leqslant  \frac{1}{(2\pi x_1)^{2d}+\ldots+(2\pi x_d)^{2d}+1}  \,.
\]
\end{proof}

\begin{corollary}\label{cor tb implies tempered}\cite[Thm.~7.1]{ARMA1}
Let $\mu \in \cM^\infty(\R^d)$. Then, for all $f \in \cS(\R^d)$, we have
\[
\int_{\R^d} |f(s)|\ \dd |\mu|(s)  < \infty \,.
\]
Moreover, the mapping $f \mapsto \int_{\R^d} f(s)\ \dd\mu(s)$ is a tempered distribution.
\end{corollary}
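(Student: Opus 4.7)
The plan is to deduce both assertions directly from Lemma~\ref{lem:distr_2} by exploiting the decay of Schwartz functions.

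First I would fix $f \in \cS(\R^d)$ and use the Schwartz property to control $f$ against the weight that appears in Lemma~\ref{lem:distr_2}. Since $f$ is a Schwartz function, the quantity
\[
M_f := \sup_{x \in \R^d}\,\bigl((2\pi x_1)^{2d}+\ldots+(2\pi x_d)^{2d}+1\bigr)\,|f(x)|
\]
is finite, and in fact $M_f$ is bounded by a finite linear combination of standard Schwartz seminorms of $f$ (multiplying out the polynomial gives a sum of $\sup_x |x^\alpha f(x)|$). This is the only place where the Schwartz decay is used.

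Next I would split the integrand as $|f(x)| = \bigl((2\pi x_1)^{2d}+\ldots+(2\pi x_d)^{2d}+1\bigr)|f(x)| \cdot \bigl((2\pi x_1)^{2d}+\ldots+(2\pi x_d)^{2d}+1\bigr)^{-1}$ and estimate
\[
\int_{\R^d} |f(s)|\,\dd|\mu|(s) \,\leqslant\, M_f \int_{\R^d} \frac{1}{(2\pi s_1)^{2d}+\ldots+(2\pi s_d)^{2d}+1}\,\dd|\mu|(s) \,\leqslant\, c\,M_f\, \|\mu\|_{[-\frac{1}{2},\frac{1}{2}]^d},
\]
where the last inequality is exactly Lemma~\ref{lem:distr_2}. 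This proves the first claim, and in particular justifies defining $T_\mu(f) := \int_{\R^d} f(s)\,\dd\mu(s)$ for every $f \in \cS(\R^d)$.

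Finally, to conclude that $T_\mu$ is a tempered distribution, I would observe that $T_\mu$ is plainly linear, and that the estimate above yields
\[
|T_\mu(f)| \,\leqslant\, \int_{\R^d}|f(s)|\,\dd|\mu|(s) \,\leqslant\, c\,\|\mu\|_{[-\frac{1}{2},\frac{1}{2}]^d}\,M_f,
\]
which is continuity in the Schwartz topology because $f \mapsto M_f$ is dominated by finitely many Schwartz seminorms. There is no real obstacle here; the only mildly delicate point is recognising $M_f$ as such a seminorm, but this is immediate by expanding the polynomial weight.
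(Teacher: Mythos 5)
Your proposal is correct and follows essentially the same route as the paper: both split $|f|$ against the weight $(2\pi x_1)^{2d}+\ldots+(2\pi x_d)^{2d}+1$, invoke Lemma~\ref{lem:distr_2} to bound the resulting integral, and observe that the surviving factor $\|(1+P)f\|_\infty$ (your $M_f$) is controlled by finitely many Schwartz seminorms. You merely make the final continuity argument more explicit than the paper's ``The claim follows.''
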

\begin{proof}
By the above with $P(x_1,\ldots,x_d):=(2\pi x_1)^{2d}+\ldots+(2\pi x_d)^{2d}$, we have
\begin{align*}
\int_{\R^d} |f(x)|\ \dd| \mu|(x)
    &\leqslant  \int_{\R^d} |f(x) (1+ P(x))|\ \dd \Big(\frac{1}{1+P} |\mu|\Big)(x)  \\
    &\leqslant C \| (1+P)f \|_\infty
\end{align*}
for all $f \in \cS(\R^d)$, where
\[
C= \int_{\R^d} \frac{1}{(2\pi x_1)^{2d}+\ldots+(2\pi x_d)^{2d}+1}\, \dd|\nu|(x) < \infty  \,.
\]
The claim follows.
\end{proof}

\medskip

Finally, let us introduce some basic concepts about point sets which we will meet in the paper.

\begin{definition}
A point set $\Lambda\subset\R^d$ is called \emph{relatively dense} if there is a compact set $K\subset \R^d$ such that $\Lambda+K=\R^d$.

The set $\Lambda$ is called \emph{uniformly discrete} if there is an open neighborhood $U$ of $0$ such that $(x+U)\cap (y+U)=\varnothing$ for all distinct $x,y\in \Lambda$.

The set $\Lambda$ is called a \emph{Meyer set} if it is relatively dense and $\Lambda-\Lambda$ is uniformly discrete. Here, $\Lambda - \Lambda$ denotes the \emph{Minkowski difference}
\[
\Lambda - \Lambda := \{ x-y\, :\, x,y \in \Lambda \} \,.
\]
\end{definition}

For more properties of Meyer sets and their full classification, we refer the reader to \cite{LAG,Meyer,MOO}.

\subsection{Fourier transformability}

Next, we briefly review the notion of Fourier transformability for measures and the connection to the Fourier transform of tempered distributions.
Let us recall that the Fourier transform and inverse Fourier transform, respectively, of a function $f \in L^1(\R^d)$ are denoted by $\widehat{f}$ and $\reallywidecheck{f}$. They are defined via
\[
\widehat{f}(x)= \int_{\R^d} e^{-2 \pi i x \cdot y} f(y)\ \dd y  \qquad \text{ and } \qquad
\reallywidecheck{f}(x) = \widehat{f}(-x) \,.
\]

Next, let us review the definition of Fourier transformability for measures.

\begin{definition}
A measure $\mu$ on $\R^d$ is called \emph{Fourier transformable as measure} if there exists a measure $\widehat{\mu}$ on $\R^d$ such that
\[
\reallywidecheck{\varphi}\in L^2(|\widehat{\mu}|) \qquad \text{ and } \qquad
\left\langle \mu\, , \, \varphi*\widetilde{\varphi} \right\rangle =
\left\langle \widehat{\mu}\, , \, |\reallywidecheck{\varphi}|^2 \right\rangle
\]
for all $\varphi \in\Cc(\R^d)$.
In this case, $\widehat{\mu}$ is called the \emph{measure Fourier transform} of $\mu$.
\end{definition}

We will often use the following result.

\begin{theorem}\label{FT measure dist}\cite[Thm.~5.2]{NS20b}
Let $\mu$ be a measure on $\R^d$. Then, $\mu$ is Fourier transformable as a measure if and only if $\mu$ is tempered as a distribution and its Fourier transform as a tempered distribution is a translation bounded measure. Moreover, in this case, the Fourier transform of $\mu$ in the measure and distribution sense coincide.  \qed
\end{theorem}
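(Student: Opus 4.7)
The plan is to prove the two implications separately.

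\emph{Sufficiency.} Assume $\mu$ is tempered with distributional Fourier transform $\nu\in\cM^\infty(\R^d)$. The key observation is that for any $\varphi\in\Cc(\R^d)$ with support in a fixed compact set, a local-global estimate for band-limited $L^2$-functions (applied to $\reallywidecheck{\varphi}$, whose Fourier transform $\varphi$ has compact support) gives
\[
\int|\reallywidecheck{\varphi}|^2\,\dd|\nu|\leqslant C(\supp\varphi,\|\nu\|_{[-1/2,1/2]^d})\cdot\|\varphi\|_2^2,
\]
so $\reallywidecheck{\varphi}\in L^2(|\nu|)$. For the Plancherel identity, first verify it on $\varphi\in\Cc^\infty(\R^d)$, where Paley-Wiener makes $\reallywidecheck{\varphi}$ Schwartz: the functional identity $\widehat{|\reallywidecheck{\varphi}|^2}=\varphi*\widetilde{\varphi}$, which follows from $\reallywidecheck{\varphi*\widetilde{\varphi}}=|\reallywidecheck{\varphi}|^2$ by Fourier inversion, combined with the definition of the distributional Fourier transform, yields
\[
\langle\mu,\varphi*\widetilde{\varphi}\rangle=\langle\mu,\widehat{|\reallywidecheck{\varphi}|^2}\rangle=\langle\nu,|\reallywidecheck{\varphi}|^2\rangle.
\]
Extension to $\varphi\in\Cc(\R^d)$ is by $L^2$-continuity: mollify $\varphi_n=\rho_{1/n}*\varphi\in\Cc^\infty$ with common compact support and pass to the limit on both sides, the LHS by uniform convergence with common support and the RHS by the $L^2(|\nu|)$-estimate above.

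\emph{Necessity.} Assume $\mu$ is Fourier transformable as a measure with measure transform $\nu_m$. The main obstacle, and the heart of the proof, is establishing $\nu_m\in\cM^\infty(\R^d)$. Apply the modulation trick: fix a nonnegative bump $\psi\in\Cc^\infty(\R^d)$ with $|\reallywidecheck{\psi}|^2\geqslant c>0$ on a ball $B_r(0)$; for $\xi\in\R^d$, the Plancherel identity with $\varphi=M_\xi\psi$ reads, using $\reallywidecheck{M_\xi\psi}=T_{-\xi}\reallywidecheck{\psi}$,
\[
\int T_{-\xi}|\reallywidecheck{\psi}|^2\,\dd\nu_m=\int M_\xi(\psi*\widetilde{\psi})\,\dd\mu,
\]
whose right-hand side is bounded uniformly in $\xi$ by $\|\psi*\widetilde{\psi}\|_\infty\cdot|\mu|(\supp(\psi*\widetilde{\psi}))$. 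Upgrading this signed-integral bound to $|\nu_m|(-\xi+B_r(0))\leqslant C$ independently of $\xi$ --- which is precisely what translation boundedness requires --- uses the Argabright-Gil de Lamadrid polarization argument, involving complex-valued test functions and a Cauchy-Schwarz estimate in $L^2(|\nu_m|)$.

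\emph{Temperedness and coincidence.} With $\nu_m$ translation bounded, Corollary~\ref{cor tb implies tempered} gives $\cS(\R^d)\subset L^1(|\nu_m|)$, so $\tilde\mu(f):=\int\widehat{f}\,\dd\nu_m$ defines a tempered distribution whose distributional Fourier transform is $\nu_m$. Verifying that $\tilde\mu$ restricted to $\Cc^\infty(\R^d)$ equals $\mu$ via the polarized Plancherel identity over finite linear combinations of convolutions $\varphi_1*\widetilde{\varphi_2}$, which are dense in $\Cc^\infty$ in a suitable sense, simultaneously establishes that $\mu$ is tempered and that the two Fourier transforms coincide.
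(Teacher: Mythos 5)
The paper does not actually prove this statement: it is imported wholesale as \cite[Thm.~5.2]{NS20b} (note the \verb|\qed| with no argument), so there is no internal proof to compare yours against. Judged on its own, your sketch follows the standard Argabright--Gil de Lamadrid route, and the sufficiency direction is essentially complete: the band-limited estimate $\int_{\R^d}|\reallywidecheck{\varphi}|^2\,\dd|\nu|\leqslant C\,\|\varphi\|_2^2$ for $\supp(\varphi)\subseteq K$ does hold (write $\varphi=\varphi\chi$ for a plateau function $\chi\in\Cc^\infty(\R^d)$ equal to $1$ on $K$, so $\reallywidecheck{\varphi}=\reallywidecheck{\varphi}*\reallywidecheck{\chi}$, apply Cauchy--Schwarz and use that $|\reallywidecheck{\chi}|*|\nu|$ is bounded because $\nu$ is translation bounded), and the mollification step is routine.

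The necessity direction, however, contains a genuine gap exactly where you write that upgrading the modulation bound to $|\nu_m|(-\xi+B_r(0))\leqslant C$ ``uses the Argabright--Gil de Lamadrid polarization argument.'' That upgrade \emph{is} the theorem that the Fourier transform of a transformable measure is translation bounded (\cite[Thm.~2.5]{ARMA1}, \cite[Thm.~4.9.23]{MoSt}) --- the single hardest ingredient of the whole statement --- and naming the tool is not supplying the proof: for a complex measure $\nu_m$, a uniform bound on the signed integrals $\int T_{-\xi}|\reallywidecheck{\psi}|^2\,\dd\nu_m$ controls neither the real and imaginary parts nor the total variation without substantial further work. Two smaller points. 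First, with the paper's convention $\widehat{\omega}(f)=\omega(\widehat{f}\,)$, your candidate should be $\tilde\mu(f):=\int_{\R^d}\reallywidecheck{f}\,\dd\nu_m$ rather than $\int_{\R^d}\widehat{f}\,\dd\nu_m$ if you want $\widehat{\tilde\mu}=\nu_m$. Second, the closing density step needs to be made precise: polarization gives $\mu=\tilde\mu$ only on $K_2(\R^d)=\operatorname{Span}\{\psi*\phi\,:\,\psi,\phi\in\Cc(\R^d)\}$, and one then approximates $f\in\Cc^\infty(\R^d)$ by $f*\rho_n$ for mollifiers $\rho_n\in\Cc^\infty(\R^d)$, using that $f*\rho_n\to f$ both uniformly with common compact support (to pass to the limit in $\mu$) and in the Schwartz topology (to pass to the limit in the tempered distribution $\tilde\mu$); only this yields simultaneously that $\mu$ is tempered and that $\widehat{\mu}=\nu_m$. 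With the translation-boundedness step either proved or explicitly cited, and the density argument filled in, your outline is correct.
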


The following result is easy to prove and will be useful later in the paper. Note that the definition of the Fourier transformability of
measures guarantees that \eqref{eq1} below holds for all Fourier transformable measures $\mu$ and all $\varphi \in K_2(\R^d)=\mbox{Span} \{ \psi * \phi : \psi, \phi \in \Cc(\R^d) \}$. Nevertheless, there are many Fourier transformable measures for which \eqref{eq1} fails for functions $\varphi \in \Cc(\R^d) \backslash K_2(\R^d)$. The fact that $\rho$ is a finite measure is crucial in Proposition~\ref{P1}.

\begin{proposition} \label{P1}
Let $\rho$ be a finite measure on $\R^d$, let $h= \reallywidecheck{\rho}$ and let $\mu=h \lm$. Then, for all
$\varphi \in \Cc(\R^d)$, we have
\begin{equation}\label{eq1}
\mu(\varphi)= \rho(\reallywidecheck{\varphi}) \,.
\end{equation}
\end{proposition}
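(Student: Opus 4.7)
The plan is to prove this by a direct Fubini computation. First I would unpack the definition of $h$: by hypothesis,
\[
h(x)=\reallywidecheck{\rho}(x)=\int_{\R^d} e^{2\pi\im x\cdot y}\,\dd\rho(y),
\]
which makes sense pointwise because $\rho$ is finite (so the integrand, of modulus $1$, is $\rho$-integrable for every fixed $x$). Then for $\varphi\in\Cc(\R^d)$ I would write
\[
\mu(\varphi)=\int_{\R^d} h(x)\varphi(x)\,\dd x
 =\int_{\R^d}\!\!\int_{\R^d} e^{2\pi\im x\cdot y}\varphi(x)\,\dd\rho(y)\,\dd x,
\]
and aim to swap the order of integration.

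The key step is the Fubini--Tonelli justification, and this is the one place where finiteness of $\rho$ is essential (as the statement emphasizes). Since $\varphi\in\Cc(\R^d)$, it is bounded with compact support, so
\[
\int_{\R^d}\!\!\int_{\R^d}\bigl|e^{2\pi\im x\cdot y}\varphi(x)\bigr|\,\dd|\rho|(y)\,\dd x
 \;\leqslant\; \|\varphi\|_\infty\,\lm(\supp\varphi)\,|\rho|(\R^d)\;<\;\infty,
\]
which licenses the exchange. After swapping, the inner integral becomes
\[
\int_{\R^d} e^{2\pi\im x\cdot y}\varphi(x)\,\dd x=\widehat{\varphi}(-y)=\reallywidecheck{\varphi}(y),
\]
so
\[
\mu(\varphi)=\int_{\R^d}\reallywidecheck{\varphi}(y)\,\dd\rho(y)=\rho(\reallywidecheck{\varphi}),
\]
which is the claim.

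There is no real obstacle here; the only subtlety worth flagging is that $\reallywidecheck{\varphi}$ need not have compact support (indeed, it is typically only a Schwartz-type function on $\R^d$), so without finiteness of $\rho$ the right-hand side $\rho(\reallywidecheck{\varphi})$ would not even be defined in general. Finiteness of $\rho$ both guarantees integrability of $\reallywidecheck{\varphi}$ against $\rho$ (because $\reallywidecheck{\varphi}$ is bounded) and provides the dominating bound needed for Fubini. This explains the remark in the statement that, for general Fourier transformable $\mu$, the identity \eqref{eq1} may fail on $\Cc(\R^d)\setminus K_2(\R^d)$: it is precisely the finiteness assumption that lets us extend it from $K_2(\R^d)$ to all of $\Cc(\R^d)$.
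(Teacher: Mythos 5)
Your proof is correct and follows essentially the same route as the paper's: a direct Fubini computation, swapping the order of integration after unpacking $h=\reallywidecheck{\rho}$. You are in fact slightly more careful than the paper in spelling out the dominating bound $\|\varphi\|_\infty\,\lm(\supp\varphi)\,|\rho|(\R^d)<\infty$ that licenses the exchange.
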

\begin{proof}
Note here that since $\nu$ is finite, all integrals below are finite and hence, by Fubini's theorem,
\begin{align*}
\mu(f)
    &= \int_{\R^d} \varphi(x)\, h(x) \, \dd x
     = \int_{\R^d} \varphi(x) \int_{\R^d}
       \e^{2\pi\im x\cdot y} \, \dd\rho(y)\, \dd x  \\
    &= \int_{\R^d} \int_{\R^d} \varphi(x)\, \e^{2\pi\im x\cdot y}\, \dd x\,
      \, \dd\rho(y)
     = \int_{\R^d} \reallywidecheck{\varphi}(y)\, \dd \rho(y) = \rho(\reallywidecheck{\varphi})  \,,
\end{align*}
which establishes the desired equality.
\end{proof}

\subsection{Almost periodicity}

Here, we briefly review the concepts of almost periodicity for functions, measures and tempered distributions. For more general overviews, we recommend the review \cite{MoSt} as well as \cite{Eb,LS2,LSS,LSS2,ST}, just to name a few.

\begin{definition}
A function $f\in\Cu(\R^d)$ is \emph{strongly (or Bohr) almost periodic}, if the closure of $\{T_tf\, :\, t\in \R^d\}$ in $(\Cu(\R^d), \| \cdot \|_\infty)$ is compact. $f\in\Cu(\R^d)$ is \emph{weakly almost periodic}, if the closure of $\{T_tf\, :\, t\in \R^d\}$ in the weak topology of the Banach space $(\Cu(\R^d), \| \cdot \|_\infty)$ is compact.
The spaces of strongly and weakly almost periodic functions on $\R^d$ are denoted by $S\hspace*{-1pt}AP(\R^d)$ and $W\hspace*{-2pt}AP(\R^d)$, respectively.
\end{definition}

For every $f\in W\hspace*{-2pt}AP(\R^d)$ the \emph{mean}
\[
M(f):=\lim_{n\to\infty} \frac{1}{(2n)^d} \int_{s+[-n,n]^d} f(x)\ \dd x
\]
exists uniformly in $s\in \R^d$, see \cite[Thm.~14.1]{Eb}, \cite[Prop.~4.5.9]{MoSt}. Moreover, we also have $|f| \in W\hspace*{-2pt}AP(\R^d)$ \cite[Thm.~11.2]{Eb}, \cite[Prop.~4.3.11]{MoSt}.

\begin{definition}
A function $f\in W\hspace*{-2pt}AP(\R^d)$ is \emph{null weakly almost periodic} if $M(|f|)=0$. We denote the space of null weakly almost periodic functions by $WAP_0(\R^d)$.
\end{definition}

In the spirit of \cite{ARMA}, these notions carry over to measures and tempered distributions, respectively, via convolutions with functions in $\Cc(\R^d)$ or $\cS(\R^d)$, respectively.

\begin{definition}
A measure $\mu\in\cM^{\infty}(\R^d)$ is \emph{strongly}, \emph{weakly} or \emph{null weakly almost periodic} if $\varphi*\mu$ is a strongly, weakly or null weakly almost periodic function, for all $\varphi \in\Cc(\R^d)$. We will denote by
$\SAP(\R^d)$, $\WAP(\R^d)$, and $\WAP_0(\R^d)$ the spaces of strongly, weakly and null
weakly almost periodic measures.

Analogously, a tempered distribution $\omega\in\cS'(\R^d)$ is called respectively \emph{weakly, strongly} or \emph{null weakly almost periodic} if, for all $f \in \cS(\R^d)$, the function $f*\omega$ is  respectively weakly, strongly or null weakly almost periodic. Here,
\[
(f*\omega)(x)= \omega( f(x- \cdot)) \in C^\infty(\R^d) \,.
\]
We shall denote the corresponding spaces of tempered distributions by $\fWAP(\R^d), \fSAP(\R^d)$ and $\fWAP_0(\R^d)$ respectively.
\end{definition}

At the end of this section, we introduce the \emph{Eberlein decomposition} for measures and distributions, and discus its relevance to diffraction theory. Let us first remind the reader of the following results.

\begin{proposition}\label{prop ebe}\cite[Thm.~4.10.10]{MoSt}, \cite[Thm.~5.2]{ST}
\begin{itemize}
  \item[(a)] $\displaystyle \WAP(\R^d) = \SAP(\R^d)\oplus \WAP_0(\R^d)$.
  \item[(b)] $\displaystyle \fWAP(\R^d) = \fSAP(\R^d)\oplus \fWAP_0(\R^d)$.
\end{itemize}
\end{proposition}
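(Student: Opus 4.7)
The plan is to bootstrap the function-level Eberlein decomposition up to measures and then to tempered distributions. The starting point is Eberlein's classical theorem for scalar functions: every $f\in WAP(\R^d)$ admits a unique decomposition $f=f_s+f_0$ with $f_s\in SAP(\R^d)$ and $f_0\in WAP_0(\R^d)$ (this can be proved either via the Ryll-Nardzewski fixed point theorem applied to the unique invariant mean on $WAP$, or by showing that $SAP$ is a complemented closed subalgebra of the $C^*$-algebra $WAP$ via the mean as an ergodic projection). The key additional fact I would lean on is that the projection $P_s\colon WAP(\R^d)\to SAP(\R^d)$ is translation invariant and continuous on $(\Cu(\R^d),\|\cdot\|_\infty)$; hence it commutes with convolution by any fixed function, i.e.\ $P_s(\varphi*f)=\varphi*P_s(f)$ for $\varphi\in\Cc(\R^d)$, and similarly for the complementary projection onto $WAP_0(\R^d)$.

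\textbf{Part (a).} Given $\mu\in\WAP(\R^d)$, for each $\varphi\in\Cc(\R^d)$ the function $\varphi*\mu$ lies in $WAP(\R^d)$, so it splits uniquely as $(\varphi*\mu)_s+(\varphi*\mu)_0$. I would \emph{define} $\mu_s$ and $\mu_0$ by prescribing their convolutions: require $\varphi*\mu_s:=(\varphi*\mu)_s$ and $\varphi*\mu_0:=(\varphi*\mu)_0$ for all $\varphi\in\Cc(\R^d)$. For the associativity
\[
\psi*(\varphi*\mu)_s \;=\; (\psi*\varphi*\mu)_s \;=\; \varphi*(\psi*\mu)_s
\]
(which follows from translation invariance of $P_s$) ensures the family $\{(\varphi*\mu)_s\}_\varphi$ is compatible, and hence extracts a unique linear functional on $\Kt(\R^d):=\mathrm{Span}\{\psi*\phi:\psi,\phi\in\Cc(\R^d)\}$. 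Choosing an approximate identity $(\phi_n)\subset\Cc^\infty(\R^d)$ and using the uniform bound $\|(\varphi*\mu)_s\|_\infty\leqslant\|\varphi*\mu\|_\infty\leqslant\|\varphi\|_\infty\,\|\mu\|_K$ on compacts $K$, I would extend this functional to $\Cc(\R^d)$ and show the extension is a translation bounded measure. By construction $\mu_s\in\SAP(\R^d)$ and $\mu_0:=\mu-\mu_s\in\WAP_0(\R^d)$, and uniqueness is immediate from the function-level uniqueness after convolving with any $\varphi$.

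\textbf{Part (b).} The same scheme transports to tempered distributions. For $\omega\in\fWAP(\R^d)$ and $f\in\cS(\R^d)$, $f*\omega\in WAP(\R^d)$ splits as $(f*\omega)_s+(f*\omega)_0$, and I prescribe $f*\omega_s:=(f*\omega)_s$, $f*\omega_0:=(f*\omega)_0$. Equivalently, I would define $\omega_s(f):=(f^\dagger*\omega)_s(0)$ and check, using translation invariance of $P_s$, that this is well-defined and linear in $f$. Continuity of $\omega_s$ with respect to the Schwartz topology follows from continuity of $\omega$ combined with the contraction property of $P_s$. The identity $f*\omega=f*\omega_s+f*\omega_0$ for all $f\in\cS(\R^d)$, together with the fact that convolution with a Schwartz approximate identity recovers a tempered distribution, gives $\omega=\omega_s+\omega_0$ in $\cS'(\R^d)$; uniqueness again descends from the function case.

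\textbf{Main obstacle.} The essential difficulty in both parts is the \emph{assembly step}: checking that the pointwise data $\{(\varphi*\mu)_s\}_{\varphi}$, respectively $\{(f*\omega)_s\}_f$, really comes from a single translation bounded measure (resp.\ tempered distribution). The compatibility identities from translation invariance of $P_s$ plus the uniform bounds on convolution are what make this work, but one must be careful that the approximation argument lands inside $\cM^\infty(\R^d)$ (resp.\ $\cS'(\R^d)$) rather than only in some larger dual. Once $\mu_s$ (resp.\ $\omega_s$) is constructed, membership in $\SAP$ and $\WAP_0$ is essentially tautological, and uniqueness is formal.
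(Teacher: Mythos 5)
This proposition is imported by citation in the paper (\cite[Thm.~4.10.10]{MoSt} for (a) and \cite[Thm.~5.2]{ST} for (b)); no proof is given in the text. Your bootstrap argument --- starting from Eberlein's scalar decomposition, using that the projection $P_{\operatorname{s}}$ onto $SAP(\R^d)$ is translation invariant and contractive so that it commutes with convolution, and then defining $\mu_{\operatorname{s}}(\varphi):=(\varphi^{\dagger}*\mu)_{\operatorname{s}}(0)$ (resp.\ $\omega_{\operatorname{s}}(f):=(f^{\dagger}*\omega)_{\operatorname{s}}(0)$) and verifying the assembly --- is exactly the approach of the cited references, and it is correct; the only step deserving explicit justification is the continuity of $f\mapsto \|f*\omega\|_{\infty}$ on $\cS(\R^d)$, which is what guarantees $\omega_{\operatorname{s}}\in\cS'(\R^d)$ and is supplied in \cite{ST} by the translation boundedness of $\omega$.
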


\medskip

Proposition~\ref{prop ebe} says that each $\mu \in \WAP(\R^d)$ has a unique Eberlein decomposition
\begin{equation}\label{EQ1}
\mu =\mu_{\operatorname{s}}+ \mu_0 \,,
\end{equation}
with $\mu_{\operatorname{s}} \in \SAP(\R^d)$ and $\mu_0 \in \WAP_0(\R^d)$. Similarly, each $\omega \in \fWAP(\R^d)$ has a unique Eberlein decomposition
\begin{equation}\label{EQ2}
\omega =\omega_{\operatorname{s}}+ \omega_0 \,,
\end{equation}
with $\omega_{\operatorname{s}} \in \fSAP(\R^d)$ and $\omega_0 \in \fWAP_0(\R^d)$.

Furthermore, by \cite[Thm.~5.3]{ST}, we have $\WAP(\R^d) \subseteq \fWAP(\R^d)$, $\SAP(\R^d) \subseteq \fSAP(\R^d)$ and $\WAP_0(\R^d) \subseteq \fWAP_0(\R^d)$. In particular, for a measure $\mu \in \WAP(\R^d) \subseteq \fWAP(\R^d)$, the decompositions of \eqref{EQ1} and \eqref{EQ2} coincide.

\medskip

The importance of the Eberlein decomposition for diffraction theory is given by the following two results.

\begin{theorem}\label{t1}\cite[Thm.~4.10.4 and Thm.~4.10.12]{MoSt}
Let $\mu \in \cM^\infty(\R^d)$ be Fourier transformable as measure. Then, $\mu \in \WAP(\R^d)$, the measures $\mu_{\operatorname{s}}$, $\mu_0$ are Fourier transformable and
\[
\reallywidehat{\mu_{\operatorname{s}}}= \left( \widehat{\mu} \right)_{\operatorname{pp}} \qquad \text{ and } \qquad  \reallywidehat{\mu_{0}}= \left( \widehat{\mu} \right)_{\operatorname{c}}  \,.
\]
\end{theorem}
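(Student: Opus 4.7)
The plan is to proceed in three stages: first, show that $\mu \in \WAP(\R^d)$, which allows application of Proposition~\ref{prop ebe} to obtain the Eberlein decomposition $\mu = \mu_{\operatorname{s}} + \mu_0$; second, establish that both components $\mu_{\operatorname{s}}$ and $\mu_0$ are individually Fourier transformable as measures; third, identify their Fourier transforms with the pure point and continuous parts of $\widehat{\mu}$.

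For the first stage, I take $\varphi \in K_2(\R^d)$, so by polarization $\varphi$ is a finite linear combination of terms of the form $\psi \ast \widetilde{\psi}$ with $\psi \in \Cc(\R^d)$. For each such term, $\widehat{\psi \ast \widetilde{\psi}} = |\reallywidecheck{\psi}|^2$ lies in $L^1(|\widehat{\mu}|)$ directly by the definition of Fourier transformability applied to $\psi$. Hence $\rho := \widehat{\varphi}\cdot \widehat{\mu}$ is a finite measure on $\R^d$, and Proposition~\ref{P1} identifies $\varphi \ast \mu = \reallywidecheck{\rho}$ as continuous functions on $\R^d$. Since the inverse Fourier transform of a finite measure is weakly almost periodic (classical Bochner--Eberlein), $\varphi \ast \mu \in W\hspace*{-2pt}AP(\R^d)$ for all $\varphi \in K_2(\R^d)$. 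To promote this to all $\varphi \in \Cc(\R^d)$, I fix an approximate identity $(\eta_\eps) \subset K_2(\R^d)$ with compact supports shrinking to $\{0\}$; then $\varphi \ast \eta_\eps \in K_2(\R^d)$ and $(\varphi \ast \eta_\eps) \ast \mu = (\varphi \ast \mu) \ast \eta_\eps$ converges uniformly on $\R^d$ to $\varphi \ast \mu$, using that $\varphi \ast \mu \in \Cu(\R^d)$ (since $\mu$ is translation bounded). Closure of $W\hspace*{-2pt}AP(\R^d)$ under uniform limits then gives $\varphi \ast \mu \in W\hspace*{-2pt}AP(\R^d)$, so $\mu \in \WAP(\R^d)$.

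For the remaining stages, I write the Lebesgue decomposition $\widehat{\mu} = (\widehat{\mu})_{\operatorname{pp}} + (\widehat{\mu})_{\operatorname{c}}$, with both components translation bounded since their total variations are dominated by $|\widehat{\mu}|$. By Corollary~\ref{cor tb implies tempered}, both components are tempered distributions, and I define $\nu_{\operatorname{s}}, \nu_0 \in \cS'(\R^d)$ as their inverse distributional Fourier transforms, so that $\nu_{\operatorname{s}} + \nu_0 = \mu$ by Fourier inversion. For any $\varphi \in K_2(\R^d)$, the product $\widehat{\varphi} \cdot (\widehat{\mu})_{\operatorname{pp}}$ is a finite pure point measure and $\widehat{\varphi}\cdot (\widehat{\mu})_{\operatorname{c}}$ is a finite continuous measure, so by Proposition~\ref{P1} applied in reverse one obtains $\varphi \ast \nu_{\operatorname{s}} = \reallywidecheck{\widehat{\varphi}\cdot(\widehat{\mu})_{\operatorname{pp}}}$ as a convergent Bohr--Fourier series, putting it in $S\hspace*{-1pt}AP(\R^d)$, while $\varphi \ast \nu_0 = \reallywidecheck{\widehat{\varphi}\cdot(\widehat{\mu})_{\operatorname{c}}}$ lies in $W\hspace*{-2pt}AP_0(\R^d)$ because a finite continuous measure has vanishing point masses and hence vanishing Bohr--Fourier coefficients. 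Mollifying as in the first stage extends this to all $\varphi \in \Cc(\R^d)$, so $\nu_{\operatorname{s}} \in \SAP(\R^d)$ and $\nu_0 \in \WAP_0(\R^d)$. Uniqueness of the Eberlein decomposition in Proposition~\ref{prop ebe} then forces $\nu_{\operatorname{s}} = \mu_{\operatorname{s}}$ and $\nu_0 = \mu_0$, and by construction $\reallywidehat{\mu_{\operatorname{s}}} = (\widehat{\mu})_{\operatorname{pp}}$ and $\reallywidehat{\mu_0} = (\widehat{\mu})_{\operatorname{c}}$.

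The main obstacle is the implicit step in the third stage that $\nu_{\operatorname{s}}$ and $\nu_0$ are genuine translation bounded measures (and thus Fourier transformable as measures via Theorem~\ref{FT measure dist}), rather than just distributions whose convolutions with Schwartz functions are almost periodic. This requires a dual characterization: from uniform bounds on the family $\{\varphi \ast \nu_\bullet : \varphi \in \Cc(\R^d),\ \supp\varphi \subseteq K,\ \|\varphi\|_\infty \leqslant 1\}$, which follow from the corresponding bounds for $\mu$, one must recover translation boundedness of $\nu_\bullet$. This is precisely where the technology of \cite[Thm.~4.10.12]{MoSt} intervenes, via careful estimates on the mean $M$ and the continuity of the map $\widehat{\mu} \mapsto \widehat{\mu}(\{\chi\})$ through convolution with $\varphi \in K_2(\R^d)$.
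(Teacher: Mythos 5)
The paper does not actually prove Theorem~\ref{t1}: it is imported verbatim from \cite[Thm.~4.10.4 and Thm.~4.10.12]{MoSt}, so your attempt can only be judged against that source and against the internal consistency of the paper's toolkit. Your first stage is sound and is essentially the standard argument: for $\varphi\in K_2(\R^d)$ the measure $\widehat{\varphi}\,\widehat{\mu}$ is finite by the defining $L^2(|\widehat{\mu}|)$ condition plus polarization, $\varphi*\mu$ is its inverse Fourier--Stieltjes transform and hence weakly almost periodic, and the approximate-identity step legitimately upgrades this to all $\varphi\in\Cc(\R^d)$ because $W\hspace*{-2pt}AP(\R^d)$ is norm-closed in $\Cu(\R^d)$.

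The genuine gap is the one you concede at the end: your $\nu_{\operatorname{s}},\nu_0$ are constructed only as tempered distributions, the expression $\varphi*\nu_\bullet$ for non-smooth $\varphi\in\Cc(\R^d)$ (indeed already for general $\varphi\in K_2(\R^d)$) is not defined until you know $\nu_\bullet$ is a measure, and you discharge exactly this point to ``the technology of \cite[Thm.~4.10.12]{MoSt}'' --- that is, to the theorem being proved. Since the measure-valuedness and Fourier transformability of $\mu_{\operatorname{s}},\mu_0$ is the substantive content (the purely distributional statement is already Theorem~\ref{t2}), this cannot be waved through. The gap does close with tools the paper quotes, and without any dual recovery of translation boundedness: once $\mu\in\WAP(\R^d)$ is established, Proposition~\ref{prop ebe}(a) hands you $\mu_{\operatorname{s}}\in\SAP(\R^d)$ and $\mu_0\in\WAP_0(\R^d)$ as translation bounded measures from the outset; your Fourier-side computation, restricted to Schwartz test functions where $f*\nu_\bullet$ is unambiguous, shows $\nu_{\operatorname{s}}\in\fSAP(\R^d)$ and $\nu_0\in\fWAP_0(\R^d)$; the inclusions $\SAP(\R^d)\subseteq\fSAP(\R^d)$, $\WAP_0(\R^d)\subseteq\fWAP_0(\R^d)$ together with uniqueness in Proposition~\ref{prop ebe}(b) then force $\mu_{\operatorname{s}}=\nu_{\operatorname{s}}$ and $\mu_0=\nu_0$, so each component is a translation bounded measure whose distributional Fourier transform is the translation bounded measure $(\widehat{\mu})_{\operatorname{pp}}$ resp.\ $(\widehat{\mu})_{\operatorname{c}}$, and Theorem~\ref{FT measure dist} converts this into Fourier transformability as a measure. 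Two smaller inaccuracies: Proposition~\ref{P1} does not literally give $\varphi*\mu=\reallywidecheck{\rho}$ (it gives a pairing identity; the convolution identity for $\varphi\in K_2(\R^d)$ needs its own short argument), and the claim that $\reallywidecheck{\sigma}\in W\hspace*{-2pt}AP_0(\R^d)$ for a finite continuous measure $\sigma$ should be justified via Wiener's lemma $M(|\reallywidecheck{\sigma}|^2)=\sum_\chi|\sigma(\{\chi\})|^2$, or via the fact that a weakly almost periodic function with vanishing Fourier--Bohr coefficients is null weakly almost periodic, not merely by ``vanishing point masses''.
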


\begin{theorem}\label{t2}\cite[Thm.~6.1]{ST}
Let $\omega \in \cS'(\R^d)$. If $\widehat{\omega}$ is a measure then $\omega \in \fWAP(\R^d)$ and
\[
\reallywidehat{\omega_{\operatorname{s}}}= \left( \widehat{\omega} \right)_{\operatorname{pp}} \qquad \text{ and } \qquad \reallywidehat{\omega_{0}}= \left( \widehat{\omega} \right)_{\operatorname{c}}  \,.
\]
\end{theorem}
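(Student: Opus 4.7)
The plan is to first establish $\omega\in\fWAP(\R^d)$, then identify the two Eberlein summands as the inverse Fourier transforms of the pure-point and continuous parts of $\widehat\omega$, and finally invoke uniqueness of the decomposition (Proposition~\ref{prop ebe}(b)) to conclude.

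To show $\omega\in\fWAP(\R^d)$, I would pick $f\in\cS(\R^d)$ and prove that $f*\omega$ is a weakly almost periodic function. On the Fourier side, $\widehat{f*\omega}=\widehat f\cdot\widehat\omega$. Since $\widehat\omega$ is a measure arising as a tempered distribution, an estimate analogous to Corollary~\ref{cor tb implies tempered} shows that the Schwartz function $\widehat f$ is $|\widehat\omega|$-integrable, so $\widehat f\cdot\widehat\omega$ is a \emph{finite} complex measure on $\R^d$. Therefore $f*\omega=\reallywidecheck{\bigl(\widehat f\cdot\widehat\omega\bigr)}$ is the inverse Fourier transform of a finite measure, and the classical theorem of Eberlein ensures that such an inverse transform is weakly almost periodic.

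Next, perform the Lebesgue decomposition $\widehat\omega=(\widehat\omega)_{\operatorname{pp}}+(\widehat\omega)_{\operatorname{c}}$ of the measure $\widehat\omega$. Both pieces remain tempered as distributions, so their inverse Fourier transforms define tempered distributions $\sigma:=\reallywidecheck{(\widehat\omega)_{\operatorname{pp}}}$ and $\tau:=\reallywidecheck{(\widehat\omega)_{\operatorname{c}}}$ with $\omega=\sigma+\tau$. I would then verify $\sigma\in\fSAP(\R^d)$ and $\tau\in\fWAP_0(\R^d)$. For $\sigma$, writing $(\widehat\omega)_{\operatorname{pp}}=\sum_x c_x\delta_x$, the convolution takes the form $f*\sigma=\sum_x c_x\widehat f(x)\,\e^{2\pi\im x\cdot(\cdot)}$, an absolutely sup-norm convergent sum of characters (rapid decay of $\widehat f$ beats the sub-polynomial growth of the partial sums $\sum_{|x|\leqslant R}|c_x|$), giving a Bohr almost periodic function. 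For $\tau$, the finite continuous measure $\widehat f\cdot(\widehat\omega)_{\operatorname{c}}$ has no atoms, so Wiener's mean-value theorem applied to $f*\tau=\reallywidecheck{\bigl(\widehat f\cdot(\widehat\omega)_{\operatorname{c}}\bigr)}$ yields
\[
M\bigl(|f*\tau|^{2}\bigr)=\sum_{x\in\R^d}\bigl|\widehat f(x)\,(\widehat\omega)_{\operatorname{c}}(\{x\})\bigr|^{2}=0 \,,
\]
and Cauchy--Schwarz then gives $M(|f*\tau|)=0$, so $f*\tau$ is null weakly almost periodic.

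Uniqueness of the Eberlein decomposition in Proposition~\ref{prop ebe}(b) then forces $\sigma=\omega_{\operatorname{s}}$ and $\tau=\omega_{0}$, yielding $\reallywidehat{\omega_{\operatorname{s}}}=(\widehat\omega)_{\operatorname{pp}}$ and $\reallywidehat{\omega_{0}}=(\widehat\omega)_{\operatorname{c}}$. The main obstacle is the verification of the two membership statements in Step 3: the pure-point case requires controlling possibly infinite character sums uniformly, while the continuous case requires Wiener's formula in the multidimensional distributional setting. Both difficulties dissolve once one observes that multiplication by the Schwartz factor $\widehat f$ tames $\widehat\omega$ (and each of its Lebesgue components) to a finite measure, bringing everything into the classical finite-measure framework where Eberlein's and Wiener's theorems apply.
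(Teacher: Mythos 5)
The paper does not actually prove Theorem~\ref{t2}: it is imported wholesale from \cite[Thm.~6.1]{ST}, so there is no in-text argument to compare yours against. That said, your outline follows what is essentially the standard (and, up to the distributional packaging, the cited) route, mirroring the classical measure case of Theorem~\ref{t1}: multiply $\widehat\omega$ and its Lebesgue components by the Schwartz factor $\widehat f$ to land in the finite-measure world, invoke Eberlein's theorem that inverse transforms of finite measures are weakly almost periodic, use uniform convergence of $\sum_x c_x\widehat f(x)\e^{2\pi\im x\cdot t}$ for the pure point part and Wiener's lemma plus Cauchy--Schwarz for the continuous part, and close with the uniqueness in Proposition~\ref{prop ebe}(b). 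All of these individual steps are sound, and the final appeal to uniqueness is exactly right.

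The one step you should not wave through is the assertion that $\widehat f\cdot\widehat\omega$ is a \emph{finite} measure. Corollary~\ref{cor tb implies tempered} (via Lemma~\ref{lem:distr_2}) gives $\int|\widehat f|\,\dd|\widehat\omega|<\infty$ only when $\widehat\omega$ is translation bounded, whereas the hypothesis of Theorem~\ref{t2} is merely that $\widehat\omega$ is a measure which happens to be a tempered distribution. As the paper itself remarks, there exist tempered measures $\mu$ whose total variation $|\mu|$ is not tempered, and for such $\mu$ one can have $\int|\widehat f|\,\dd|\mu|=\infty$ for suitable Schwartz $f$; the same pathology threatens the temperedness of $(\widehat\omega)_{\operatorname{pp}}$ and $(\widehat\omega)_{\operatorname{c}}$ separately, which you need before $\sigma$ and $\tau$ are even defined. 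So your argument as written is complete only under an additional integrability assumption on $|\widehat\omega|$ (for instance, that it is slowly increasing). This is harmless for every use of Theorem~\ref{t2} in this paper, since there $\widehat\omega\in\cM^\infty(\R^d)$ and Lemma~\ref{lem:distr_2} supplies exactly the missing estimate, but you should either state that restriction or justify the finiteness of $\widehat f\cdot\widehat\omega$ from whatever hypotheses \cite{ST} actually imposes, rather than from an ``estimate analogous to Corollary~\ref{cor tb implies tempered}'' that is a translation-boundedness result.
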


As we mentioned in the introduction, it is our goal in this paper to show the existence of the generalized Eberlein decomposition, and study some of its properties. To do this, we will work with the larger class $\mathcal{DTBM}(\R^d)$ of tempered distributions whose Fourier transform is a translation bounded measure. This class contains all measures which are Fourier transformable (as measure).

\section{Properties of $\mathcal{DTBM}(\R^d)$}

As emphasized in the previous section, the following space will play the central role in this section
\[
\mathcal{DTBM}(\R^d):= \{ \omega \in \cS'(\R^d)\, :\, \widehat{\omega} \in \cM^\infty(\R^d) \}  \,.
\]
Note that by Theorem~\ref{t2} we have $\mathcal{DTBM}(\R^d) \subseteq \fWAP(\R^d)$. Moreover, by Theorem~\ref{FT measure dist},
if $\mu$ is a measure on $\R^d$ which is Fourier transformable as measure, then
$\mu \in \mathcal{DTBM}(\R^d)$.

\smallskip

Let us note that every measure $\mu \in \cM^\infty(\R^d)$ is a tempered measure by Corollary~\ref{cor tb implies tempered}, and hence the Fourier transform of some $\omega \in \cS'(\R^d)$, which by definition is in $\mathcal{DTBM}(\R^d)$. Therefore, we get the following simple fact.

\begin{fact}\label{Fact 1}
The Fourier transform is a bijection from $\mathcal{DTBM}(\R^d)$ to $\cM^\infty(\R^d)$.
\end{fact}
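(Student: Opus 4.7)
The plan is to verify injectivity and surjectivity separately, each being essentially immediate from the setup. Injectivity is inherited directly from the fact that the Fourier transform is a bijection on the space $\cS'(\R^d)$ of tempered distributions: if $\omega_1, \omega_2 \in \mathcal{DTBM}(\R^d)$ satisfy $\widehat{\omega_1} = \widehat{\omega_2}$, then as elements of $\cS'(\R^d)$ they must coincide, since one can apply the inverse Fourier transform on $\cS'(\R^d)$.

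For surjectivity, I would start with an arbitrary $\mu \in \cM^\infty(\R^d)$. By Corollary~\ref{cor tb implies tempered}, $\mu$ is tempered in the strong sense and hence defines a tempered distribution. Consequently, $\omega := \reallywidecheck{\mu} \in \cS'(\R^d)$ is well-defined via the inverse Fourier transform on $\cS'(\R^d)$, and by Fourier inversion on tempered distributions we have $\widehat{\omega} = \mu \in \cM^\infty(\R^d)$. By the definition of $\mathcal{DTBM}(\R^d)$, this means $\omega \in \mathcal{DTBM}(\R^d)$, and so $\mu$ lies in the image of the Fourier transform restricted to $\mathcal{DTBM}(\R^d)$.

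Finally, the fact that the map $\omega \mapsto \widehat{\omega}$ lands in $\cM^\infty(\R^d)$ when restricted to $\mathcal{DTBM}(\R^d)$ is tautological from the definition of $\mathcal{DTBM}(\R^d)$. Thus there is no substantial obstacle here; the statement is really a packaging result collecting Corollary~\ref{cor tb implies tempered} (tempered-ness of translation bounded measures, which is the nontrivial input, and which was already proved above using Lemma~\ref{lem:distr_2}) together with the standard bijectivity of the Fourier transform on $\cS'(\R^d)$.
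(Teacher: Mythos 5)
Your proof is correct and follows essentially the same route as the paper: the paper also deduces surjectivity from Corollary~\ref{cor tb implies tempered} (translation bounded measures are tempered, hence are Fourier transforms of tempered distributions, which by definition lie in $\mathcal{DTBM}(\R^d)$), with injectivity inherited from the bijectivity of the Fourier transform on $\cS'(\R^d)$. No further comment is needed.
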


Since this space will play a fundamental role in remainder of the paper, we will characterize it in the following. Let us start with a simple characterization for the positive definite tempered distributions in this space. First, recall that $\omega \in \cS'(\R^d)$ is called \emph{positive definite} if for all $f \in \cS(\R^d)$ we have
\[
\omega (f*\widetilde{f}) \geqslant 0 \,.
\]
By the Bochner--Schwartz theorem \cite[Thm. IX.10]{ReSi}, a tempered distribution is positive definite if and only if its Fourier transform is a positive tempered measure.

\medskip

Recall that given $\omega \in \cS'(\R^d)$ and $f \in \cS(\R^d)$, the convolution $f*\omega$
is an infinitely many times differentiable function, which is not necessarily bounded. If $f*\omega$ is bounded for all $f \in \cS(\R^d)$, we say that $\omega$ is a \emph{translation bounded} tempered distribution (see \cite{ST} for properties of these tempered distributions). The space of translation bounded tempered distributions is denoted by $\cS'_{\infty}(\R^d)$.

\medskip

We can now prove the following result.

\begin{proposition}
Let $\omega \in \cS'(\R^d)$ be positive definite. Then, the following statements are equivalent.
\begin{itemize}
\item[(i)] $\omega \in \mathcal{DTBM}(\R^d)$.
\item[(ii)] $\widehat{\omega} \in \cS_{\infty}'(\R^d)$.
\item[(iii)] For all $f \in \cS(\R^d)$ there exists some constant $C>0$ such that
\[
\big|\omega(e^{2 \pi \im t \cdot} f)\big| \leqslant C
\]
for all $t \in \R^d$.
\item[(iv)] There exists some $f \in \cS(\R^d)$ and $C>0$ with $f\neq 0$, $\widehat{f} \geqslant 0$ such that
\[
\big|\omega(e^{2 \pi \im t \cdot} f)\big| \leqslant C
\]
for all $t \in \R^d$.
\end{itemize}
\end{proposition}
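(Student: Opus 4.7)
The plan is to prove the cyclic chain $(i)\Rightarrow(ii)\Rightarrow(iii)\Rightarrow(iv)\Rightarrow(i)$. The workhorse is the Parseval identity $\omega(\psi)=\widehat{\omega}(\reallywidecheck{\psi})$, valid for every $\psi\in\cS(\R^d)$. Applied to $\psi_t(x):=e^{2\pi\im t\cdot x}f(x)$, a short calculation gives $\reallywidecheck{\psi_t}(y)=\reallywidecheck{f}(y+t)=(T_{-t}\reallywidecheck{f})(y)$, and therefore
\[
\omega\bigl(e^{2\pi\im t\cdot}f\bigr)=\widehat{\omega}(T_{-t}\reallywidecheck{f})=(\widehat{f}*\widehat{\omega})(-t),
\]
the convolution being interpreted distributionally in general and as an honest integral when $\widehat{\omega}$ is a measure. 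This single identity links all four conditions.

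\textbf{The easy implications.} For (i)$\Rightarrow$(ii), given $g\in\cS(\R^d)$ I would combine the rapid decay of $g$ with Lemma~\ref{lem:distr_2} applied to $|\widehat{\omega}|$, together with the translation-invariance $\|T_{-x}\widehat{\omega}\|_K=\|\widehat{\omega}\|_K$, to bound $|(g*\widehat{\omega})(x)|$ uniformly in $x$; hence $\widehat{\omega}\in\cS_\infty'(\R^d)$. For (ii)$\Rightarrow$(iii), the identity above yields $|\omega(e^{2\pi\im t\cdot}f)|=|(\widehat{f}*\widehat{\omega})(-t)|$, and this is uniformly bounded in $t$ because $\widehat{f}\in\cS(\R^d)$ and $\widehat{\omega}\in\cS_\infty'(\R^d)$. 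For (iii)$\Rightarrow$(iv), take the Gaussian $f(x):=e^{-\pi|x|^2}$, which is non-zero and satisfies $\widehat{f}=f\geq 0$.

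\textbf{The key step, (iv)$\Rightarrow$(i).} By the Bochner--Schwartz theorem, positive definiteness of $\omega$ makes $\widehat{\omega}$ a positive tempered measure; positivity plus temperedness means $\widehat{\omega}$ is slowly increasing and hence integrates every Schwartz function, so the Parseval identity specializes to
\[
\omega(e^{2\pi\im t\cdot}f)=\int_{\R^d}\reallywidecheck{f}(y+t)\,d\widehat{\omega}(y).
\]
The hypothesis $\widehat{f}\geq 0$ forces $\reallywidecheck{f}(y)=\widehat{f}(-y)\geq 0$, and since $f\neq 0$ the continuous function $\reallywidecheck{f}$ is not identically zero; so there exist $y_0\in\R^d$, an open ball $B$ about $y_0$ and $\epsilon>0$ with $\reallywidecheck{f}\geq\epsilon$ on $B$. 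Both factors being non-negative, the integral is itself a non-negative real, and
\[
C\geq|\omega(e^{2\pi\im t\cdot}f)|=\int_{\R^d}\reallywidecheck{f}(y+t)\,d\widehat{\omega}(y)\geq\epsilon\,\widehat{\omega}(B-t)\quad\text{for every }t\in\R^d.
\]
As $t$ ranges over $\R^d$, $B-t$ covers every translate of $B$, so $\sup_{s\in\R^d}\widehat{\omega}(s+B)\leq C/\epsilon<\infty$. Since $B$ is pre-compact with non-empty interior, the remark in the preliminaries promotes this to translation boundedness of $\widehat{\omega}$, giving (i).

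\textbf{Main obstacle.} The only non-routine direction is (iv)$\Rightarrow$(i); the other three are Fourier bookkeeping together with the choice of a single test function. The crux is the joint use of both positivity hypotheses: Bochner--Schwartz delivers $\widehat{\omega}\geq 0$, while $\widehat{f}\geq 0$ delivers $\reallywidecheck{f}\geq 0$, and together they rule out cancellations in the integral so that an upper bound on the a priori complex quantity $|\omega(e^{2\pi\im t\cdot}f)|$ translates into a \emph{uniform} upper bound on $\widehat{\omega}$ over translates of a fixed ball. Without either positivity assumption, cancellations would block this step.
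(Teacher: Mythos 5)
Your proof is correct and follows essentially the same route as the paper: the identity $\omega(e^{2\pi\im t\cdot}f)=(\widehat{f}*\widehat{\omega})(\pm t)$ drives (ii)$\Rightarrow$(iii), and your (iv)$\Rightarrow$(i) is exactly the paper's argument (positivity of $\widehat{\omega}$ via Bochner--Schwartz plus a lower bound $\reallywidecheck{f}\geqslant c$ on a ball, yielding a uniform bound on $\widehat{\omega}$ over translates of that ball). The only cosmetic difference is that you prove (i)$\Rightarrow$(ii) directly from Lemma~\ref{lem:distr_2} and close a cycle, whereas the paper cites the equivalence (i)$\iff$(ii) from the literature.
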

\begin{proof}
The equivalence (i)$\iff$(ii) follows from \cite[Prop.~2.5]{ST}.

\medskip

\noindent (ii)$\implies$(iii) Let $f \in \cS(\R^d)$, and let $C:= \| \widehat{f}*\widehat{\omega} \|_\infty < \infty$. The number $C$ is finite because $\widehat{\omega} \in \cS_{\infty}'(\R^d)$.  Then, for all $t \in \R^d$ we have
\[
\big|\omega(e^{-2 \pi \im t \cdot} f)\big|
    = \big|\widehat{\omega}(\reallywidecheck{e^{-2 \pi \im t \cdot} f})\big|
    = \big| (\widehat{f} * \widehat{\omega}) (t) \big| \leqslant C \,.
\]

\smallskip

\noindent (iii)$\implies$(iv) This is obvious.

\medskip

\noindent (iv)$\implies$(i) First, $f \neq 0$ implies $\reallywidecheck{f} \neq 0$. Therefore, there exists some $s \in \R^d$ such that $\reallywidecheck{f}(s) \neq 0$. Since $\reallywidecheck{f}(s)=\widehat{f}(-s) \geqslant 0$, we get $\reallywidecheck{f}(s) >0$. Therefore, there exists some $r>0$ and $c>0$ such that
\[
\reallywidecheck{f}(x) \geqslant c \qquad \text{ for all } x \in B_r(s) \,.
\]

Now, let $\mu := \widehat{\omega}$, which is a positive measure since $\omega$ is positive definite. Since $\reallywidecheck{f}$ and $\mu$ are positive, we have
\begin{align*}
c\, \mu(B_r(x))
    &= \int_{\R^d} c\, 1_{B_r(x)}(y)\ \dd\mu(y)
     = \int_{\R^d} c\, 1_{B_r(s)}(y-x+s)\ \dd\mu(y) \\
    &\leqslant \int_{\R^d} \reallywidecheck{f} (y-x+s)\ \dd \mu(y)
     =\int_{\R^d} \reallywidecheck{\e^{2\pi\im(s-x)\cdot}f}\,(y)\ \dd
       \widehat{\omega}(y) \\
    &=\omega(e^{2 \pi \im (s-x) \cdot} f) \leqslant C
\end{align*}
for all $x \in \R^d$.
This gives that
\[
\| \mu \|_{B_{r}(0)} = \sup_{x \in \R^d} \mu(B_r(x)) \leqslant \frac{C}{c} \,,
\]
which proves the claim.
\end{proof}

Next, by repeating the arguments of \cite{SS2}, we can give the following characterization of $\mathcal{DTBM}(\R^d)$.

\begin{proposition}
Let $\omega \in \cS'(\R^d)$.
Let $B:= \{ \varphi \in \Cc^\infty(\R^d)\, :\, \supp(\varphi) \subseteq [-1,1]^d \}$.
Then, the following statements are equivalent.
\begin{itemize}
\item[(i)] $\omega \in \mathcal{DTBM}(\R^d)$.
\item[(ii)] $\widehat{\omega} \in \cS_\infty'(\R^d)$ and the operator
\[
T: B \to \Cu(\R^d) \,, \qquad  \varphi \mapsto \varphi*\widehat{\omega} \,,
\]
is a continuous operator.
\item[(iii)] There exists a constant $C>0$ such that, for all $t \in \R^d$ and $\varphi \in\Cc^\infty(\R^d)$ with $|\varphi| \leqslant 1_{[-1,1]^d}$, we have
\[
\big|\omega(e^{2 \pi \im t \cdot} \reallywidecheck{\varphi})\big| \leqslant C \,.
\]
\item[(iv)] There exist positive definite $\omega_1, \omega_2, \omega_3, \omega_4 \in \mathcal{DTBM}(\R^d)$ such that
\[
\omega=\omega_1-\omega_2+\im(\omega_3-\omega_4) \,.
\]
\end{itemize}
\end{proposition}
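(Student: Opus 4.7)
The plan is to run the cycle (i) $\Rightarrow$ (ii) $\Rightarrow$ (iii) $\Rightarrow$ (i) along the lines of the translation-bounded-measure characterisations in \cite{SS2}, and to handle (iv) $\iff$ (i) separately via the Bochner--Schwartz theorem and the Jordan decomposition. The common thread for the first three conditions is the identity
\[
\omega\bigl(e^{2\pi\im t\cdot}\reallywidecheck{\varphi}\bigr) \;=\; (\varphi * \widehat{\omega})(-t),\qquad \varphi\in\Cc^\infty(\R^d),\ t\in\R^d,
\]
which follows at once from $\omega(\eta)=\widehat{\omega}(\reallywidecheck{\eta})$ together with the computation $\reallywidecheck{e^{2\pi\im t\cdot}\reallywidecheck{\varphi}}(y)=\varphi(-(y+t))$. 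Under this identity, condition (iii) is precisely $\|\varphi*\widehat{\omega}\|_\infty\leqslant C$ for all $\varphi\in\Cc^\infty$ with $|\varphi|\leqslant 1_{[-1,1]^d}$.

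Two of the implications are immediate. For (i) $\Rightarrow$ (ii): if $\widehat\omega\in\cM^\infty(\R^d)$ then $\widehat\omega\in\cS_\infty'(\R^d)$ (translation bounded measures are translation bounded as distributions), and for $\varphi\in B$ the estimate $\|\varphi*\widehat\omega\|_\infty\leqslant\|\varphi\|_\infty\,\|\widehat\omega\|_{[-1,1]^d}$ shows that $T$ is continuous with respect to the sup-norm; the same estimate restricted to $|\varphi|\leqslant 1_{[-1,1]^d}$ together with the identity above gives (i) $\Rightarrow$ (iii). Conversely, (ii) $\Rightarrow$ (iii) extracts from the continuity of $T$ a uniform bound $\|T\varphi\|_\infty\leqslant C$ on $\{\varphi\in B:|\varphi|\leqslant 1_{[-1,1]^d}\}$ and substitutes into the backbone identity.

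The core of the proof is (iii) $\Rightarrow$ (i). Setting $t=0$ in the identity reads (iii) as $|\widehat\omega(\varphi^\dagger)|\leqslant C$ for all $\varphi\in\Cc^\infty$ with $|\varphi|\leqslant 1_{[-1,1]^d}$; since $\varphi\mapsto\varphi^\dagger$ preserves the constraint, equivalently $|\widehat\omega(\psi)|\leqslant C$ for all such $\psi$. Running the identity with general $t$ yields the translated analogue on every cube $x+[-1,1]^d$ with the \emph{same} constant $C$, and scaling gives $|\widehat\omega(\psi)|\leqslant C\|\psi\|_\infty$ for $\psi\in\Cc^\infty$ supported in any such cube. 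Covering an arbitrary compact $K$ by finitely many cube translates and applying a smooth partition of unity promotes this to a bound $|\widehat\omega(\psi)|\leqslant C_K\|\psi\|_\infty$ on $\Cc_K^\infty(\R^d)$; density of $\Cc_K^\infty$ in $\Cc_K(\R^d)$ in the sup norm extends $\widehat\omega$ uniquely to a complex Radon measure, and the uniform-in-$t$ constant $C$ then forces this measure to be translation bounded, establishing (i).

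Finally, (iv) $\Rightarrow$ (i) is trivial since $\cM^\infty(\R^d)$ is a complex vector space. For (i) $\Rightarrow$ (iv): decompose the complex translation bounded measure $\widehat\omega$ into real and imaginary parts, apply the Jordan decomposition to each, obtaining four positive translation bounded measures $\nu_1,\nu_2,\nu_3,\nu_4$ with $\widehat\omega=\nu_1-\nu_2+\im(\nu_3-\nu_4)$. Each $\nu_j$ is tempered by Corollary~\ref{cor tb implies tempered}, so by the Bochner--Schwartz theorem $\omega_j:=\reallywidecheck{\nu_j}$ is a positive definite tempered distribution with $\widehat{\omega_j}=\nu_j\in\cM^\infty(\R^d)$, hence $\omega_j\in\mathcal{DTBM}(\R^d)$, which yields the required decomposition. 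The main obstacle is the density-plus-partition-of-unity step in (iii) $\Rightarrow$ (i), where one only has a priori distributional bounds on test functions and must carefully assemble them into a genuine Radon measure while preserving the uniform constant across all cube translates, so as to yield translation boundedness rather than mere local finiteness.
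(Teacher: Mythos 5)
Your proposal is correct and follows essentially the same route as the paper: the backbone identity $\omega(e^{2\pi\im t\cdot}\reallywidecheck{\varphi})=(\varphi*\widehat{\omega})(\pm t)$, the norm estimate $\|\varphi*\widehat{\omega}\|_\infty\leqslant\|\widehat{\omega}\|_{[-1,1]^d}\|\varphi\|_\infty$ for the easy directions, the partition-of-unity/density construction of the Radon measure with uniform constants for the hard direction, and the Jordan decomposition for (iv). The only cosmetic difference is that you close the cycle as (iii) $\Rightarrow$ (i) directly while the paper routes it through (iii) $\Rightarrow$ (ii) $\Rightarrow$ (i) and cites \cite[Cor.~3.4]{SS2} for the final translation-boundedness step, which you argue by hand.
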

\begin{proof}
(i)$\implies$(iv)  This is immediate. Indeed, if $\mu = \widehat{\omega} \in \cM^\infty(\R^d)$, then there exist positive measures $\mu_1,\mu_2,\mu_3, \mu_4 \in \cM^\infty(\R^d)$ such that
\[
\mu=\mu_1-\mu_2+\im(\mu_3-\mu_4) \,.
\]
As usual, for all $1 \leqslant j \leqslant 4$, there exists some $\omega_j \in \mathcal{DTBM}(\R^d)$ such that $\widehat{\omega_j}= \mu_j$. The claim follows.

\medskip

\noindent (iv)$\implies$(i) This is also immediate, as a (finite) linear combination of translation bounded measures is translation bounded.

\medskip

\noindent (i)$\implies$(ii) Let $\mu =\widehat{\omega} \in \cM^\infty(\R^d)$. Then, $\widehat{\omega} \in \cS_\infty'(\R^d)$ by  \cite[Cor.~2.1]{ST}. Moreover, we have
\[
\|T(\varphi) \|_\infty=\| \varphi*\mu \|_\infty \leqslant \| \mu \|_{ [-1,1]^d } \|\varphi \|_\infty
\]
for all $\varphi \in B$.

\medskip

\noindent (ii)$\implies$(iii) Let $\varphi\in \Cc^\infty(\R^d)$ with $|\varphi| \leqslant 1_{[-1,1]^d}$. Then, $\varphi \in B$ and $\| \varphi \|_\infty \leqslant 1$. Therefore,
\[
\big|\omega(e^{2\pi\im t \cdot} \reallywidecheck{\varphi})\big| =|(\varphi*\widehat{\omega}) (t) | \leqslant \| \varphi* \widehat{\omega} \|_\infty \leqslant  \|T  \|\,  \|\varphi \|_\infty = \|T \| \,.
\]

\smallskip

\noindent (iii)$\implies$(ii) Let $\varphi \in B$. Define
\[
\phi=
\begin{cases}
  \frac{1}{\|\varphi\|_\infty} \varphi & \mbox{ if } \|\varphi \|_\infty \neq 0\,,  \\
  0 & \mbox{ if } \|\varphi \|_\infty = 0 \,.
\end{cases}
\]
Then, $\phi \in B, |\phi| \leqslant 1_{[-1,1]^d}$ and $\varphi= \|\varphi \|_\infty \phi$. Therefore,  we have
\[
| (\varphi*\widehat{\omega})(t)|
    = \|\varphi\|_\infty \left| (\phi*\widehat{\omega})(t) \right|
    =\|\varphi\|_\infty \left|\omega(e^{2 \pi \im t \cdot}\, \reallywidecheck{\phi}
      )\right| \leqslant C \| \varphi \|_\infty
\]
for all $t \in \R^d$.

\medskip

\noindent (ii)$\implies$ (i) Let $N \in \N$ be arbitrary and let $K_N := [-N,N]^d$. Via a standard partition of unity argument, there exist some $\phi_1,\ldots,\phi_k \in \Cc^\infty(\R^d)$ and $t_1,\ldots,t_k \in \R^d$ such that $0 \leqslant \phi_j(x) \leqslant 1$ for all $x \in \R^d$, $\supp(\phi_j)  \subseteq [-1,1]^d$ and
\[
\sum_{j=1}^k T_{t_j}\phi_j (x) = 1
\]
for all $x \in K_N$. Then, for all $\varphi \in \Cc^\infty(\R^d:K_N):=\{ \psi \in \Cc^\infty(\R^d)\, :\, \supp(\psi) \subseteq K_N\}$, we have
\[
\varphi=\sum_{j=1}^k \varphi T_{t_j}\phi_j \,.
\]
Therefore, we obtain
\[
| \widehat{\omega}(\varphi)|
    = | (\widehat{\omega}*\varphi^{\dagger})(0)|
      \leqslant \sum_{j=1}^k | (\widehat{\omega}*(\varphi T_{t_j}\phi_j)^{\dagger})(0)|
      =  \sum_{j=1}^k \left|(\widehat{\omega}*(\phi_jT_{-t_j}\varphi)^{\dagger})
       (t_j) \right|  \,.
\]
Therefore, since $\supp((\phi_jT_{-t_j}\varphi)^{\dagger}) \subseteq [-1,1]^d$, (ii) implies
\begin{align*}
| \widehat{\omega}(\varphi)|
    &\leqslant \sum_{j=1}^k \left|(\widehat{\omega}*(\phi_jT_{-t_j}
       \varphi)^{\dagger}) (t_j) \right|
      \leqslant  \sum_{j=1}^k \|T\|\, \| (\phi_jT_{-t_j}\varphi)^{\dagger}
        \|_\infty  \\
    &\leqslant   \sum_{j=1}^k \|T\|\, \|T_{-t_j}\varphi \|_\infty
      = C_N\, \|\varphi \|_\infty \,,
\end{align*}
where $C_N:= k \|T \|$ depends only on $N$ and the choice of $\phi_1,\ldots,\phi_k$.

Since $\Cc^\infty(\R^d:K_N)$ is dense in $\Cc(\R^d:K_N)=\{ \psi \in \Cc(\R^d) : \supp(\psi) \subseteq K_N \}$, it follows that for all $N$, $\widehat{\omega}$ can be uniquely extended to a continuous functional on $\Cc(\R^d:K_N)$. Therefore, there exists a measure $\mu_N$ supported inside $[-N,N]^d$ such that
\[
\mu_N(\varphi) = \widehat{\omega}(\varphi)
\]
for all $\varphi \in \Cc^\infty(\R^d)$ with $\supp(\varphi) \subseteq [-N,N]^d$. It is easy to see that $\mu_{N}= \mu_{N+1}|_{[-N,N]^d}$. We can then define $\mu: \Cc(\R^d) \to \C$ via
\[
\mu(\psi) = \mu_N(\psi) \qquad \mbox{ with } \supp(\psi) \subseteq [-N,N]^d \,,
\]
and the definition does not depend on the choice of $N$. It is easy to see that $\mu$ is linear, and
\[
| \mu(\psi) | \leqslant C_N \|\psi \|_\infty
\]
for all $\psi \in \Cc(\R^d)$ with $\supp(\psi) \subseteq [-N,N]^d$.
This shows that $\mu$ is a measure and
\[
\mu(\phi)= \widehat{\omega}(\phi) \qquad \text{ for all } \phi \in \Cc^\infty(\R^d) \,.
\]

Finally, setting $\cF:= \{ \psi \in \Cc^\infty(\R^d)\, :\, |\psi| \leqslant 1_{[-1,1]^d} \}$, \cite[Cor.~3.4]{SS2} gives
\[
\| \mu \|_{[-1,1]^d}
    = \sup_{\psi \in \cF} \| \psi*\mu \|_\infty
    =\sup_{\psi \in \cF} \| \psi*\widehat{\omega} \|_\infty
    = \sup_{\psi \in \cF} \| T\psi) \|_\infty \leqslant  \|T \| \,.
\]
This shows that $\mu =\widehat{\omega}$ is a translation bounded measure.
\end{proof}

Next, we give a more explicit description of $\mathcal{DTBM}(\R^d)$ in terms of derivatives of functions in the Fourier--Stieltjes algebra 
\[
B(\R^d):= \{ \widehat{\rho}\, :\, \rho \mbox{ is a finite measure on  } \R^d \} \,. 
\]
In particular, we will show that each tempered distribution $\omega \in \mathcal{DTBM}(\R^d)$ is a distribution of order (at most) $2d$.

\begin{theorem}\label{Prop 2}
Let $\omega \in \mathcal{DTBM}(\R^d)$, and let $\nu=\widehat{\omega}$. Then, there exists some function $h  \in B(\R^d)\subset \Cu(\R^d)$ such that
\begin{itemize}
\item[(a)] $\mu:= h \lm $ is a translation bounded measure and hence a tempered distribution.
\item[(b)] For every $\varphi \in \Cc(\R^d)$, one has
\[
\mu(\varphi) = \int_{\R^d} \frac{\reallywidecheck{\varphi}(x)}{(2\pi\im x_1)^{2d}+\ldots+(2\pi\im x_d)^{2d}+(-1)^d}\ \dd \nu(x) \,.
\]
\item[(c)] Let $\Psi_{d}:=(\partial_{x_1})^{2d}+\ldots+ (\partial_{x_d})^{2d}+(-1)^d$. Then, $\Psi_{d}\mu$ is a tempered distribution and
\[
\Psi_{d}\mu = \omega \,.
\]
\item[(d)] For all $f \in \cS(\R^d)$, we have
\[
\left| \omega (f) \right| \leqslant \|h \|_\infty  \Big( \|f \|_1 + \sum_{j=1}^d \| (\partial_{x_j})^{2d} f \|_1 \Big) \,.
\]
\item[(e)] $\omega$ is a distribution of order $2d$.
\end{itemize}
\end{theorem}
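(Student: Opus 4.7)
The plan is to produce an explicit primitive of $\omega$ under the operator $\Psi_{d}$. The key observation is that the corollary to Lemma~\ref{lem:distr_2} makes
\[
\rho := \frac{1}{(2\pi\im(\cdot)_1)^{2d}+\ldots+(2\pi\im(\cdot)_d)^{2d}+(-1)^d}\, \nu
\]
into a \emph{finite} measure on $\R^d$. Setting $h:=\reallywidecheck{\rho}$ therefore produces an element of $B(\R^d)\subseteq \Cu(\R^d)$ with $\|h\|_\infty\leqslant |\rho|(\R^d)$. This is the function which will serve in parts (a)--(e); the rest is verification.

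Part (a) is immediate since $h$ is bounded and continuous: $\|h\lm\|_K\leqslant \|h\|_\infty\vol(K)$ for every compact $K$, and Corollary~\ref{cor tb implies tempered} then provides temperedness. Part (b) follows from Proposition~\ref{P1} applied to the finite measure $\rho$: for every $\varphi\in\Cc(\R^d)$ one has $\mu(\varphi)=\rho(\reallywidecheck{\varphi})$, and unpacking $\dd \rho$ in terms of $\dd \nu$ yields the stated integral identity.

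For part (c) I will pass to the Fourier side. A Fubini argument, licit because $\rho$ is finite and $\widehat{f}$ is Schwartz, gives $\widehat{\mu}(f)=\rho(f)$ for all $f\in\cS(\R^d)$; that is, $\widehat{\mu}=\rho$ as tempered distributions. Since the Fourier transform on $\cS'(\R^d)$ intertwines $\partial_{x_j}$ with multiplication by $2\pi\im x_j$, and multiplication by the polynomial $P(x):=\sum_{j=1}^d(2\pi\im x_j)^{2d}+(-1)^d$ is continuous on $\cS'(\R^d)$, I obtain
\[
\widehat{\Psi_{d}\mu}\;=\;P\cdot\widehat{\mu}\;=\;P\cdot\rho\;=\;\nu\;=\;\widehat{\omega},
\]
and injectivity of the Fourier transform on $\cS'(\R^d)$ forces $\Psi_{d}\mu=\omega$. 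Part (d) then follows because $\Psi_{d}$ is formally self-adjoint (each $\partial_{x_j}^{2d}$ is of even order, and the zero-order term is scalar): for $f\in\cS(\R^d)$ one has $\omega(f)=\mu(\Psi_{d}f)=\int_{\R^d} h(x)\,\Psi_{d}f(x)\,\dd x$, and the triangle inequality applied to $\Psi_{d}f=\sum_j\partial_{x_j}^{2d}f+(-1)^d f$ gives the desired $L^\infty$--$L^1$ estimate.

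Finally, (e) is a direct consequence of (d): for $f\in\Cc^{\infty}(\R^d)$ with $\supp(f)\subseteq K$ compact, $\|f\|_1\leqslant \vol(K)\|f\|_\infty$ and $\|\partial_{x_j}^{2d}f\|_1\leqslant \vol(K)\|\partial_{x_j}^{2d}f\|_\infty$, giving a bound on $|\omega(f)|$ by $\|h\|_\infty\,\vol(K)$ times an order-$2d$ sup seminorm. The main obstacle is really just the initial ansatz: recognising that $1/P$ is exactly the weight which turns $\nu$ into a finite measure via Lemma~\ref{lem:distr_2}, and that inverse Fourier transforming that finite measure produces a function whose associated tempered distribution is mapped by $\Psi_{d}$ (whose Fourier symbol is precisely $P$) back to $\omega$. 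Once this ansatz is in place, each of (a)--(e) is a short calculation.
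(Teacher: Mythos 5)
Your proposal is correct and follows essentially the same route as the paper: both define $\rho=\nu/P$ (finite by Lemma~\ref{lem:distr_2}), take $h=\reallywidecheck{\rho}$, and verify (a)--(e) via Proposition~\ref{P1} and the fact that $P$ is the Fourier symbol of $\Psi_d$. The only cosmetic difference is that you establish (c) on the Fourier side ($\widehat{\Psi_d\mu}=P\rho=\nu$ plus injectivity of $\mathcal{F}$ on $\cS'(\R^d)$) whereas the paper computes $\omega(f)=\mu(\Psi_d f)$ directly against test functions; the two computations are interchangeable.
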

\begin{proof}
 Let $\rho:=\frac{1}{(2\pi\im(\cdot)_1)^{2d}+\ldots+(2\pi\im(\cdot)_d)^{2d}+(-1)^d} \nu$. By Lemma~\ref{lem:distr_2}, $\rho$ is a finite measure, and hence $h:= \reallywidecheck{\rho} \in B(\R^d) \subset \Cu(\R^d)$.

\medskip

\noindent (a) Since $f \in \Cu(\R^d)$, we have $\mu \in \cM^\infty(\R^d)$. In particular, $\mu$ is a tempered distribution.

\medskip

\noindent (b) This follows from Proposition~\ref{P1} with $\rho$ as in the proof of (a).

\medskip

\noindent (c) Let $f \in \cS(\R^d)$. Then,
\begin{align*}
\omega(f)
    &= \widehat{\theta}(\reallywidecheck{f})=\nu(\reallywidecheck{f})
     = \int_{\R^d} \frac{((2\pi\im x_1)^{2d}+\ldots+(2\pi\im x_d)^{2d}+(-1)^d)
         \,\reallywidecheck{f}
       (x)}{(2\pi\im x_1)^{2d}+\ldots+(2\pi\im x_d)^{2d}+(-1)^d}\, \dd\nu(x)  \\
    &= \int_{\R^d} \frac{\reallywidecheck{\Psi_{d}f}\,(x)}{(2\pi\im x_1)^{2d}
        +\ldots+(2\pi\im x_d)^{2d}+(-1)^d} \, \dd\nu(x)
     = \mu(\Psi_{d}f) = (\Psi_{d}\mu)(f) \,.
\end{align*}
Thus, $\omega=\Psi_{d}\mu$ as tempered distributions.

\medskip

\noindent (d) Let $f \in \cS(\R^d)$. Then,
\begin{align*}
\omega(f)
    &= (\Psi_{d}\mu )f= \mu\left(\big((\partial_{x_1})^{2d}+\ldots +
       (\partial_{x_d})^{2d}+(-1)^d\big)f \right) \\
    &=\int_{R^d} h(x) \big(\big((\partial_{x_1})^{2d} + \ldots +
        (\partial_{x_d})^{2d} +(-1)^d\big)f \big)(x)\ \dd \lm (x) \,.
\end{align*}
Therefore,
\begin{align*}
| \omega (f) |
     &\leqslant  \int_{\R^d} \int_{R^d} |h(x)| \big((\partial_{x_1})^{2d}f(x)
       |+\ldots+ |(\partial_{x_d})^{2d}f(x)|+|f(x)| \big) \dd \lm (x) \\
     &\leqslant \|h\|_\infty \Big( \|f \|_1 + \sum_{j=1}^d \|(\partial_{x_j}
        )^{2d} f \|_1 \Big) \,.
\end{align*}

\smallskip

\noindent (e) Let $K$ be a compact subset of $\R^d$, and let $\phi\in\Cc^{\infty}(\R^d)$ with $\text{supp}(\phi)\subseteq K$. Now, (d) and Lemma~\ref{lem:distr_2} imply
\[
\left|\omega(\phi)\right|
    \leqslant \|h\|_\infty \Big( \|\phi \|_1 + \sum_{j=1}^d \|
      (\partial_{x_j})^{2d} \phi \|_1 \Big)
    \leqslant \|h\|_\infty  \,\lm(K) \,\Big(\sum_{j=1}^d\|(\partial_{x_j})^{2d}
      \phi\|_{\infty} + \|\phi\|_{\infty}\Big)  \,.
\]
Thus, $\omega$ is a distribution of order $2d$.
\end{proof}

Let us now look at a very simple example.

\begin{example}
Let $\omega=\delta_{\Z}$. Then, by Poisson's summation formula $\nu:=\widehat{\omega}=\delta_{\Z}$, and hence $\omega \in \mathcal{DTBM}(\R^d)$.
In this case $h(x)=-\frac{1}{2\pi}\sum_{k\in\Z}e^{-\frac{|k-x|}{2\pi}}$.

To see this, we should first remind the reader that
\[
\frac{1}{x^2+1} = \widehat{\phi}(x)  \qquad \text{ with }\ \phi(x) = \frac{1}{2\pi} \e^{-\frac{|x|}{2\pi}} \,.
\]
Now, an application of Poisson's summation formula gives
\begin{align*}
h(x)
    &= -\int_{\R} \e^{-2\pi\im xy} \frac{1}{y^2+1}\, \dd\delta_{\Z}(y)
     = -\sum_{k\in\Z} \frac{1}{k^2+1} \e^{-2\pi\im kx}  \\
    &= -\sum_{k\in\Z} \widehat{\phi}(k) \e^{-2\pi\im kx}
     = -\sum_{k\in\Z} \phi(k-x)
     = -\frac{1}{2\pi}\sum_{k\in\Z}e^{-\frac{|k-x|}{2\pi}} \,.
\end{align*}
\end{example}

\medskip

Let us note in passing that the proofs in this section can be used to prove the following results about tempered distributions with (not necessarily translation bounded) measure Fourier transform.

\begin{lemma}
\begin{itemize}
\item[(a)] Let $h\in B(\R^d)$ and let
\[
\omega=\Psi_{d}(h \lm) \,.
\]
Then, $\omega$ is a tempered distribution and $\nu=\widehat{\omega}$ is a measure satisfying
\[
\int_{\R^d} \frac{1}{(2\pi x_1)^{2d}+\ldots+(2\pi x_d)^{2d}+1}
       \, \dd|\nu|(x) < \infty \,.
\]
\item[(b)] Let $\nu$ be a measure satisfying
\[
\int_{\R^d} \frac{1}{(2\pi x_1)^{2d}+\ldots+(2\pi x_d)^{2d}+1}
       \, \dd|\nu|(x) < \infty \,.
\]
Then, there exists some $h\in B(\R^d)$ such that $\omega=\Psi_{d}(h \lm) $ is a tempered distribution and $\nu=\widehat{\omega}$.
\end{itemize}
\end{lemma}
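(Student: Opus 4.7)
The plan is to mirror the construction in Theorem~\ref{Prop 2}, dropping the translation boundedness of $\nu$ and keeping only the pointwise integrability. The engine in both directions is the identity
\[
\widehat{\Psi_{d}(h\lm)} = P \cdot \widehat{h\lm} \,, \qquad P(\xi) := (2\pi\im \xi_1)^{2d}+\ldots+(2\pi\im \xi_d)^{2d}+(-1)^d \,,
\]
valid for any $h \in \Cu(\R^d)$ (so that $h\lm$ is a tempered distribution), together with the observation $|P(\xi)| = (2\pi \xi_1)^{2d}+\ldots+(2\pi \xi_d)^{2d}+1$. This identity follows at once from the fact that $\Psi_d$ is formally self-adjoint (since $2d$ is even) and from the standard relation $\partial_{\xi_j}^{2d}\widehat{\phi} = \widehat{(2\pi\im x_j)^{2d}\phi}$ applied to test functions.

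For part (a), I would start with a finite measure $\rho$ realising $h = \widehat{\rho}$. Then $h \in \Cu(\R^d)$, so $h\lm \in \cM^\infty(\R^d)$ is tempered, and hence $\omega := \Psi_d(h\lm)$ is a tempered distribution. Fourier inversion gives $\widehat{h\lm} = \rho^{\dagger}$, and the identity above yields $\widehat{\omega} = P \cdot \rho^{\dagger}$, which is a (complex) measure. The integrability estimate then reduces to
\[
\int_{\R^d}\frac{1}{(2\pi x_1)^{2d}+\ldots+(2\pi x_d)^{2d}+1}\, \dd|\widehat{\omega}|(x) = \int_{\R^d}\frac{|P(x)|}{(2\pi x_1)^{2d}+\ldots+(2\pi x_d)^{2d}+1}\, \dd|\rho^{\dagger}|(x) = |\rho|(\R^d)<\infty \,,
\]
using that $|P|$ is exactly the denominator.

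For part (b), given $\nu$ satisfying the integrability, I would define $\rho := \frac{1}{P}\nu$, which is a finite (complex) measure by hypothesis, set $h := \widehat{\rho^{\dagger}} \in B(\R^d)\subset \Cu(\R^d)$, and let $\omega := \Psi_d(h\lm)$. Part (a) applied to this $h$ shows that $\omega$ is a tempered distribution with $\widehat{\omega} = P\cdot(\rho^{\dagger})^{\dagger} = P\rho = \nu$, which is what we want.

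The only real obstacle is bookkeeping: one must be careful with the Fourier inversion signs and with the dagger operation so that $\rho$ and $\rho^{\dagger}$ end up on the correct sides, and one must verify that $P\cdot\rho^{\dagger}$ makes sense as a tempered distribution/measure (which it does, as a polynomial times a finite measure is always a tempered measure of strong type). No analytic difficulty beyond Lemma~\ref{lem:distr_2} and Proposition~\ref{P1} is required.
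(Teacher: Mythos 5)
Your proposal is correct and follows essentially the same route as the paper: represent $h$ as the Fourier (or inverse Fourier) transform of a finite measure, use $\widehat{\Psi_d(h\lm)}=P\cdot\widehat{h\lm}$ together with $|P(x)|=(2\pi x_1)^{2d}+\ldots+(2\pi x_d)^{2d}+1$, and observe that the weighted integral is exactly the total mass of that finite measure; part (b) is the same construction run backwards, dividing $\nu$ by $P$. The only difference is cosmetic bookkeeping (the paper writes $h=\reallywidecheck{\mu}$ so that $\widehat{h\lm}=\mu$ with no dagger), and your sign/dagger accounting is consistent.
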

\begin{proof}
(a) Since $h \in B(\R^d)$, there exists a finite measure $\mu$ on $\R^d$ such that $h=\reallywidecheck{\mu}$.
Let
\[
\nu=\left((2\pi\im(\cdot)_1)^{2d}+\ldots+(2\pi\im(\cdot)_d)^{2d}
       +(-1)^d \right)\mu \,.
\]
Then, $\nu$ is a measure.

Now, since $h=\reallywidecheck{\mu}$ and $\mu$ is a finite measure, we have
\begin{equation}\label{eq1121}
\widehat{h \lm} = \mu
\end{equation}
as measures \cite[Lemma~4.9.15]{MoSt}. In particular, \eqref{eq1121} holds as tempered distributions by Thm.~\ref{FT measure dist}.

Finally, we obtain
\begin{align*}
\widehat{\omega}
    &=\reallywidehat{\Psi_{d}(h \lm)}
     =\left( (2\pi\im(\cdot)_1)^{2d}+\ldots+(2\pi\im(\cdot)_d)^{2d} +(-1)^d
       \right) \widehat{h \lm}  \\
    &= \left((2\pi\im(\cdot)_1)^{2d}+\ldots+(2\pi\im(\cdot)_d)^{2d}
       +(-1)^d \right) \mu = \nu \,.
\end{align*}
Moreover,
\[
\int_{\R^d} \frac{1}{(2\pi x_1)^{2d}+\ldots+(2\pi x_d)^{2d}+1}
       \, \dd|\nu|(x)= \left| \mu \right|(\R^d) < \infty  \,,
\]
since $\mu$ is a finite measure.

\medskip

\noindent (b) This is similar to the proof of Proposition~\ref{Prop 2}.
\end{proof}

\medskip

Exactly the same way as in Theorem~\ref{Prop 2}, we can also prove the following result. Since the proof is identical to the one of Theorem~\ref{Prop 2}, we skip it.

\begin{proposition}
Let $\omega \in \mathcal{DTBM}(\R^d)$, and let $\nu=\widehat{\omega}$. Then, there exists some function $h\in B(\R^d)\subset \Cu(\R^d)$ such that

\begin{itemize}
\item[(a)] $\mu:= h \lm $ is a translation bounded measure and hence a tempered distribution.
\item[(b)] For every $\varphi\in \Cc(\R^d)$, one has
\[
\mu(\varphi) = \int_{\R^d} \frac{\reallywidecheck{\varphi}(x)}{((2\pi\im x_1)^{2}+\ldots+(2\pi\im x_d)^{2})^d+(-1)^d} \, \dd\nu(x) \,.
\]
\item[(c)]Let $\Phi_{d}:=\left((\partial_{x_1})^{2}+\ldots+ (\partial_{x_d})^{2}\right)^{d}+(-1)^d$. Then, $\Phi_{d}\mu$ is a tempered distribution and
\[
\Phi_{d}\mu = \omega \,.
\]
\item[(d)] For all $f \in \cS(\R^d)$ we have
\[
\left| \omega (f) \right| \leqslant \|h \|_\infty  \Big( \|f \|_1 +  \Big\| \Big( \sum_{j=1}^d (\partial_{x_j})^{2} \Big)^d f \Big\|_1 \Big) \,.
\]
\item[(e)] $\omega$ is a distribution of order $2d$.  \qed
\end{itemize}
\end{proposition}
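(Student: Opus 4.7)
The plan is to follow the proof of Theorem~\ref{Prop 2} line by line, replacing the ``separable'' polynomial symbol $(2\pi\im x_1)^{2d}+\ldots+(2\pi\im x_d)^{2d}$ by the iterated Laplacian symbol $\left((2\pi\im x_1)^2+\ldots+(2\pi\im x_d)^2\right)^d$. The only genuine input that must be swapped is the integrability/finite-measure fact: instead of invoking Lemma~\ref{lem:distr_2} directly, one invokes its corollary, which gives precisely that
\[
\rho := \frac{1}{\left((2\pi\im(\cdot)_1)^{2}+\ldots+(2\pi\im(\cdot)_d)^{2}\right)^{d}+(-1)^d}\,\nu
\]
is a finite measure on $\R^d$ for any $\nu\in\mathcal{M}^\infty(\R^d)$. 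Set $h:=\reallywidecheck{\rho}$; by the definition of $B(\R^d)$ one has $h\in B(\R^d)\subset\Cu(\R^d)$, which immediately gives (a) since bounded continuous functions yield translation bounded measures $\mu:=h\lm$.

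For (b), I would apply Proposition~\ref{P1} to $\rho$ and $h=\reallywidecheck{\rho}$ to obtain $\mu(\varphi)=\rho(\reallywidecheck{\varphi})$ for all $\varphi\in\Cc(\R^d)$, then substitute the explicit formula for $\rho$ to get the displayed integral. For (c), take $f\in\cS(\R^d)$ and compute
\[
(\Phi_d\mu)(f)=\mu(\Phi_d f)=\rho\bigl(\reallywidecheck{\Phi_d f}\bigr),
\]
using that the inverse Fourier transform turns $\partial_{x_j}$ into multiplication by $-2\pi\im x_j$, so squaring makes the sign immaterial and
\[
\reallywidecheck{\Phi_d f}(x)=\left(\left((2\pi\im x_1)^2+\ldots+(2\pi\im x_d)^2\right)^d+(-1)^d\right)\reallywidecheck{f}(x).
\]
Cancelling the symbol against the denominator of $\rho$ leaves $\nu(\reallywidecheck{f})=\widehat{\omega}(\reallywidecheck{f})=\omega(f)$, which is the required identity $\Phi_d\mu=\omega$ as tempered distributions.

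For (d), expand $\omega(f)=(\Phi_d\mu)(f)=\mu(\Phi_d f)=\int h\cdot\Phi_d f\,\dd\lm$ and apply the triangle inequality together with $\|h\|_\infty<\infty$ to get
\[
|\omega(f)|\leqslant \|h\|_\infty\Bigl(\|f\|_1+\Bigl\|\Bigl(\sum_{j=1}^d(\partial_{x_j})^2\Bigr)^d f\Bigr\|_1\Bigr).
\]
Finally (e) follows from (d) by restricting to test functions $\phi\in\Cc^\infty(\R^d)$ supported in a fixed compact $K$: bound each $L^1$-norm by $\lm(K)$ times the corresponding sup-norm, exhibiting $\omega$ as a continuous functional on $\Cc^{2d}(\R^d:K)$, hence a distribution of order at most $2d$.

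There is no real obstacle here beyond recognising that the only analytic ingredient specific to the separable symbol in Theorem~\ref{Prop 2} was Lemma~\ref{lem:distr_2}, and that its corollary supplies the same finiteness statement for the iterated-Laplacian symbol; once this swap is made, the proof is verbatim, which is exactly why the authors remark that it may be skipped.
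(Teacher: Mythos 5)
Your proposal is correct and is essentially the paper's own proof: the authors explicitly skip the argument because it is identical to that of Theorem~\ref{Prop 2}, with the separable symbol replaced by the iterated Laplacian symbol and Lemma~\ref{lem:distr_2} replaced by its corollary, which is exactly the substitution you make. All the individual steps (the use of Proposition~\ref{P1}, the even-order cancellation of signs under the inverse Fourier transform, and the order estimate in (e)) match the intended argument.
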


\section{The existence of a generalized Eberlein decomposition}

We can now prove the existence of generalized Eberlein decomposition at the level of distributions of order $2d$.

We start with the following simple results.

\begin{proposition}\label{gen ebe dist}
Let $\omega \in \mathcal{DTBM}(\R^d)$. Then, there exist unique distributions $\omega_{\operatorname{s}}, \omega_{\operatorname{0a}}$ and $\omega_{\operatorname{0s}} \in \mathcal{DTBM}(\R^d)$ such that $\omega =\omega_{\operatorname{s}}+\omega_{\operatorname{0a}}+\omega_{\operatorname{0s}}$ as well as
\[
\widehat{\,\omega_{\operatorname{s}}\,}
    = (\widehat{\omega})_{\operatorname{pp}} \,,  \qquad
        \widehat{\omega_{\operatorname{0a}}}
    = (\widehat{\omega})_{\operatorname{ac}} \qquad \text{ and } \qquad
        \widehat{\omega_{\operatorname{0s}}}
    = (\widehat{\omega})_{\operatorname{sc}} \,.
\]

Moreover, there exist functions $h_{\operatorname{s}}$, $h_{\operatorname{0a}}$ and $h_{\operatorname{0s}} \in B(\R^d)$  such that
\[
\omega_{\operatorname{s}}=\Psi_{d}\left(h_{\operatorname{s}} \lm\right) \,,\qquad \omega_{\operatorname{0a}}=\Psi_{d}\left(h_{\operatorname{0a}} \lm\right)  \qquad \operatorname{ and } \qquad \omega_{\operatorname{0s}}=\Psi_{d}\left(h_{\operatorname{0s}} \lm\right)  \,,
\]
Finally, with $\omega_{\operatorname{0}}=\omega_{\operatorname{0a}}+\omega_{\operatorname{0s}}$ the decomposition
$$
\omega=\omega_{\operatorname{s}}+\omega_{0}
$$
is the decomposition of \eqref{EQ2}.
\end{proposition}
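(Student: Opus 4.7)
The plan is to derive the decomposition by pulling back the Lebesgue decomposition of $\widehat{\omega}$ through the Fourier transform, using Fact~\ref{Fact 1}. Concretely, set $\nu := \widehat{\omega} \in \cM^\infty(\R^d)$ and take the Lebesgue decomposition $\nu = \nu_{\operatorname{pp}} + \nu_{\operatorname{ac}} + \nu_{\operatorname{sc}}$. The first thing to verify is that each of the three components is itself translation bounded; this is immediate from $|\nu_{\operatorname{pp}}|, |\nu_{\operatorname{ac}}|, |\nu_{\operatorname{sc}}| \leqslant |\nu|$ as positive Borel measures, so each component lies in $\cM^\infty(\R^d)$.

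By Fact~\ref{Fact 1}, the Fourier transform is a bijection between $\mathcal{DTBM}(\R^d)$ and $\cM^\infty(\R^d)$. Hence there exist unique $\omega_{\operatorname{s}}, \omega_{\operatorname{0a}}, \omega_{\operatorname{0s}} \in \mathcal{DTBM}(\R^d)$ with $\widehat{\omega_{\operatorname{s}}} = \nu_{\operatorname{pp}}$, $\widehat{\omega_{\operatorname{0a}}} = \nu_{\operatorname{ac}}$, $\widehat{\omega_{\operatorname{0s}}} = \nu_{\operatorname{sc}}$. Linearity and injectivity of the Fourier transform on $\cS'(\R^d)$ combined with $\widehat{\omega_{\operatorname{s}}+\omega_{\operatorname{0a}}+\omega_{\operatorname{0s}}} = \nu = \widehat{\omega}$ then yield $\omega = \omega_{\operatorname{s}} + \omega_{\operatorname{0a}} + \omega_{\operatorname{0s}}$, and uniqueness of the three pieces follows from the uniqueness of the Lebesgue decomposition together with the injectivity of the Fourier transform.

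The representation as $\Psi_{d}(h_{\bullet}\lm)$ is then obtained by applying Theorem~\ref{Prop 2}(c) separately to each of $\omega_{\operatorname{s}}, \omega_{\operatorname{0a}}, \omega_{\operatorname{0s}} \in \mathcal{DTBM}(\R^d)$, which supplies functions $h_{\operatorname{s}}, h_{\operatorname{0a}}, h_{\operatorname{0s}} \in B(\R^d)$ with the required property.

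Finally, to show that $\omega_{\operatorname{s}} + \omega_{0}$ coincides with the Eberlein decomposition \eqref{EQ2}, recall that $\mathcal{DTBM}(\R^d) \subseteq \fWAP(\R^d)$, so $\omega$ admits the unique decomposition $\omega = \omega_{\operatorname{s}}' + \omega_{0}'$ of \eqref{EQ2} with $\omega_{\operatorname{s}}' \in \fSAP(\R^d)$ and $\omega_{0}' \in \fWAP_0(\R^d)$. By Theorem~\ref{t2}, $\widehat{\omega_{\operatorname{s}}'} = (\widehat{\omega})_{\operatorname{pp}} = \nu_{\operatorname{pp}} = \widehat{\omega_{\operatorname{s}}}$ and $\widehat{\omega_{0}'} = (\widehat{\omega})_{\operatorname{c}} = \nu_{\operatorname{ac}} + \nu_{\operatorname{sc}} = \widehat{\omega_{\operatorname{0a}} + \omega_{\operatorname{0s}}} = \widehat{\omega_{0}}$. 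Injectivity of the Fourier transform now yields $\omega_{\operatorname{s}}' = \omega_{\operatorname{s}}$ and $\omega_{0}' = \omega_{0}$, as required. The only mildly delicate point in this plan is confirming that the Lebesgue components of a translation bounded measure remain translation bounded, but this is a direct consequence of the domination of each component by $|\nu|$.
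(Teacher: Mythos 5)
Your proposal is correct and follows essentially the same route as the paper: pull back the Lebesgue decomposition of $\widehat{\omega}$ through the bijection of Fact~\ref{Fact 1}, invoke Theorem~\ref{Prop 2} for the $\Psi_d(h\lm)$ representation, and use Theorem~\ref{t2} to match with the decomposition \eqref{EQ2}. The only difference is that you justify the translation boundedness of $\nu_{\operatorname{pp}}, \nu_{\operatorname{ac}}, \nu_{\operatorname{sc}}$ directly via domination by $|\nu|$ (valid, by mutual singularity of the components), where the paper simply cites \cite[Lem.~3.12]{SS2}.
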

\begin{proof}
Since $\omega \in \mathcal{DTBM}(\R^d)$ we have $\nu:= \widehat{\omega} \in \cM^\infty(\R^d)$, and hence $\nu_{\operatorname{pp}}, \nu_{\operatorname{ac}}, \nu_{\operatorname{sc}} \in \cM^\infty(\R^d)$ by \cite[Lem.~3.12]{SS2}. The existence of  $\omega_{\operatorname{s}}, \omega_{\operatorname{0a}}$ and $\omega_{\operatorname{0s}} \in \mathcal{DTBM}(\R^d)$ follows now from Fact~\ref{Fact 1}.
The existence of  $h_{\operatorname{s}}$, $h_{\operatorname{0a}}$ and $h_{\operatorname{0s}}$ follows from Theorem~\ref{Prop 2}.
The last claim follows from Theorem~\ref{t2}.
\end{proof}

\begin{remark}
Let $\omega \in \mathcal{DTBM}(\R^d)$ be positive definite and let $\nu =\widehat{\omega}$. We can ask wether or not the distributions $\omega_{\operatorname{s}}$, $\omega_{\operatorname{0a}}$ and $\omega_{\operatorname{0s}}$ can be chosen as measures.
However, for the distributions to be measures, it is necessary that $\nu_{\operatorname{pp}}$, $\nu_{\operatorname{ac}}$ and $\nu_{\operatorname{sc}}$ are weakly almost periodic \cite[Thm.~4.11.12]{MoSt}. So, in general the answer is `no'.
For example, if $\nu = \lm|_{[0, \infty)}$ and $\omega =\reallywidecheck{\nu}$, it is clear that $\omega=\omega_{\operatorname{0a}}$ is not a measure.
Similarly, with $\omega= \reallywidecheck{\delta_{\N}}$, the distribution $\omega=\omega_{\operatorname{s}}$ is not a measure.
\end{remark}

In the case of Fourier transformable measures, Proposition~\ref{gen ebe dist} yields the following consequence.

\begin{theorem} \label{coro:mainrd}
Let $\gamma$ be measure on $\R^d$, which is Fourier transformable as a measure. Then, there exist unique Fourier transformable measure $\gamma_{\operatorname{s}}$, tempered distributions $\omega_{\operatorname{0a}}, \omega_{\operatorname{0s}} \in \mathcal{DTBM}(\R^d)$ of order $2d$ and functions $h_{\operatorname{s}}$, $h_{\operatorname{0a}}$ and $h_{\operatorname{0s}} \in B(\R^d)$  such that
\begin{itemize}
\item[(a)]$\gamma_0:=\omega_{\operatorname{0a}}+\omega_{\operatorname{0s}}$ is a Fourier transformable measure.
\item[(b)] $\gamma= \gamma_{\operatorname{s}}+\omega_{\operatorname{0a}}+ \omega_{\operatorname{0s}}$.
\item[(c)] $\gamma=\gamma_{\operatorname{s}}+\gamma_0$ is the Eberlein decomposition of Proposition~\ref{prop ebe}.
\item[(d)] $\widehat{\,\gamma_{\operatorname{s}}\,}= (\widehat{\gamma})_{\operatorname{pp}}$, $\widehat{\omega_{\operatorname{0a}}}= (\widehat{\gamma})_{\operatorname{ac}}$  and  $\widehat{\omega_{\operatorname{0s}}}= (\widehat{\gamma})_{\operatorname{sc}}$.
\item[(e)] $\gamma_{\operatorname{s}}=\Psi_{d}\left(h_{\operatorname{s}} \lm\right)$, $\omega_{\operatorname{0a}}=\Psi_{d}\left(h_{\operatorname{0a}} \lm\right)$   and $\omega_{\operatorname{0s}}=\Psi_{d}\left(h_{\operatorname{0s}} \lm\right)$.
\item[(f)] If $\gamma$ is positive definite, then $\gamma_{\operatorname{s}},\gamma_0$ are positive definite measures and $\omega_{\operatorname{0a}}, \omega_{\operatorname{0s}}$ are positive definite tempered distributions.
\end{itemize}
\end{theorem}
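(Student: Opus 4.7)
The plan is to view the Fourier transformable measure $\gamma$ as an element of $\mathcal{DTBM}(\R^d)$ (via Theorem~\ref{FT measure dist}) and then apply Proposition~\ref{gen ebe dist} directly, after which all that remains is to reconcile the resulting distributional decomposition with the measure-level Eberlein decomposition provided by Theorem~\ref{t1}.

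Concretely, I would proceed as follows. First, by Theorem~\ref{FT measure dist}, $\gamma$ is a tempered distribution whose Fourier transform equals the measure Fourier transform $\widehat{\gamma} \in \cM^\infty(\R^d)$, so $\gamma \in \mathcal{DTBM}(\R^d)$. Proposition~\ref{gen ebe dist} then yields a decomposition $\gamma = \omega_{\operatorname{s}} + \omega_{\operatorname{0a}} + \omega_{\operatorname{0s}}$ in $\mathcal{DTBM}(\R^d)$ with
\[
\widehat{\omega_{\operatorname{s}}} = (\widehat{\gamma})_{\operatorname{pp}}, \qquad \widehat{\omega_{\operatorname{0a}}} = (\widehat{\gamma})_{\operatorname{ac}}, \qquad \widehat{\omega_{\operatorname{0s}}} = (\widehat{\gamma})_{\operatorname{sc}},
\]
together with the representations $\omega_{\operatorname{s}} = \Psi_d(h_{\operatorname{s}}\lm)$, $\omega_{\operatorname{0a}} = \Psi_d(h_{\operatorname{0a}}\lm)$ and $\omega_{\operatorname{0s}} = \Psi_d(h_{\operatorname{0s}}\lm)$ for some $h_{\operatorname{s}}, h_{\operatorname{0a}}, h_{\operatorname{0s}} \in B(\R^d)$, by Theorem~\ref{Prop 2}. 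This proves (b) and (e), and gives the order $2d$ bound on the distributional components.

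Next, I would independently apply Theorem~\ref{t1} to $\gamma$ to obtain a measure-level Eberlein decomposition $\gamma = \gamma_{\operatorname{s}} + \gamma_{0}$, where $\gamma_{\operatorname{s}}$ and $\gamma_{0}$ are Fourier transformable measures with $\widehat{\gamma_{\operatorname{s}}} = (\widehat{\gamma})_{\operatorname{pp}}$ and $\widehat{\gamma_{0}} = (\widehat{\gamma})_{\operatorname{c}} = (\widehat{\gamma})_{\operatorname{ac}} + (\widehat{\gamma})_{\operatorname{sc}}$. The key reconciliation step is then to invoke Fact~\ref{Fact 1}: since $\omega_{\operatorname{s}}$ and $\gamma_{\operatorname{s}}$ both lie in $\mathcal{DTBM}(\R^d)$ and share the same Fourier transform $(\widehat{\gamma})_{\operatorname{pp}}$, they must be equal, so $\omega_{\operatorname{s}} = \gamma_{\operatorname{s}}$ is a Fourier transformable measure. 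Similarly, $\widehat{\omega_{\operatorname{0a}} + \omega_{\operatorname{0s}}} = (\widehat{\gamma})_{\operatorname{ac}} + (\widehat{\gamma})_{\operatorname{sc}} = (\widehat{\gamma})_{\operatorname{c}} = \widehat{\gamma_{0}}$, so $\omega_{\operatorname{0a}} + \omega_{\operatorname{0s}} = \gamma_{0}$ is a Fourier transformable measure. This establishes (a), (c), (d), and the uniqueness statements (uniqueness of the $\omega$'s comes from Fact~\ref{Fact 1}, and of the $h$'s from the explicit formula in Theorem~\ref{Prop 2}(b) applied to $\omega_{\operatorname{s}}, \omega_{\operatorname{0a}}, \omega_{\operatorname{0s}}$).

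Finally, for (f), assume $\gamma$ is positive definite, so $\widehat{\gamma}$ is a positive measure. Then each of the three components $(\widehat{\gamma})_{\operatorname{pp}}, (\widehat{\gamma})_{\operatorname{ac}}, (\widehat{\gamma})_{\operatorname{sc}}$ of its Lebesgue decomposition is a positive measure, hence a positive tempered distribution, and by the Bochner--Schwartz theorem their inverse Fourier transforms $\gamma_{\operatorname{s}}$, $\omega_{\operatorname{0a}}$, $\omega_{\operatorname{0s}}$ are positive definite tempered distributions; since $\gamma_{\operatorname{s}}$ and $\gamma_{0}$ are also measures, they are positive definite measures. There is really no serious obstacle in this argument — everything is assembled from the previous results — but the one conceptual point that must not be glossed over is the identification of the distributional decomposition of Proposition~\ref{gen ebe dist} with the measure-level decomposition of Theorem~\ref{t1}, which is exactly what Fact~\ref{Fact 1} (injectivity of the Fourier transform on $\mathcal{DTBM}(\R^d)$) is designed to handle.
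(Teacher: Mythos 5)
Your proposal is correct and follows essentially the same route as the paper: both obtain the distributional decomposition from Proposition~\ref{gen ebe dist}, the measure-level Eberlein decomposition from Theorem~\ref{t1}, and then identify $\gamma_{\operatorname{s}}=\omega_{\operatorname{s}}$ and $\gamma_{0}=\omega_{\operatorname{0a}}+\omega_{\operatorname{0s}}$ by injectivity of the Fourier transform on tempered distributions, with Theorem~\ref{Prop 2} supplying the functions $h$ and part (f) following from positivity of the components of $\widehat{\gamma}$. The only cosmetic difference is that you cite the Bochner--Schwartz theorem explicitly for (f), where the paper simply observes that the Lebesgue components of the positive measure $\widehat{\gamma}$ are positive.
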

\begin{proof}
Since $\gamma$ is a positive definite measure, we have $\gamma \in \mathcal{DTBM}(\R^d)$ by Theorem~\ref{FT measure dist}.

Now, let
\[
\gamma=\gamma_{\operatorname{s}}+ \gamma_0
\]
be the usual Eberlein decomposition of $\gamma$ from Theorem~\ref{t1}, and let
\[
\gamma=\omega_{\operatorname{s}}+\omega_{\operatorname{0a}}+\omega_{\operatorname{0s}}
\]
be the decomposition of Proposition~\ref{gen ebe dist}. Then, we have
\[
\widehat{\,\gamma_{\operatorname{s}}\,}
    = \left( \widehat{\gamma} \right)_{\operatorname{pp}}
     = \widehat{\,\omega_{\operatorname{s}}\,} \qquad \text{ and }
       \qquad  \widehat{\,\gamma_{0}\,}
    = \left( \widehat{\gamma} \right)_{\operatorname{c}}
     =\left( \widehat{\gamma} \right)_{\operatorname{ac}}
       +\left( \widehat{\gamma} \right)_{\operatorname{sc}}
     = \widehat{\omega_{\operatorname{0s}}}+\widehat{\omega_{\operatorname{0a}}}
\]
in the sense of tempered distributions. The injectivity of the Fourier transform then gives
\[
\gamma_{\operatorname{s}} =\omega_{\operatorname{s}}  \qquad \text{ and } \qquad
\gamma_{0} = \omega_{\operatorname{0s}}+\omega_{\operatorname{0a}} \,.
\]

Note here that $\omega_{\operatorname{0a}}, \omega_{\operatorname{0s}} \in \mathcal{DTBM}(\R^d)$ by Proposition~\ref{gen ebe dist}. Also, by Theorem~\ref{FT measure dist}, we have $\gamma_{\operatorname{s}} \in \mathcal{DTBM}(\R^d)$. In particular, Theorem~\ref{Prop 2} gives the existence of $h_{\operatorname{s}}$, $h_{\operatorname{0a}}$ and $h_{\operatorname{0s}} \in B(\R^d)$ such that (e) holds.

The injectivity of the Fourier transform for tempered distributions and uniqueness of the Lebesgue decomposition for $\widehat{\gamma}$ gives the uniqueness of $\gamma_{\operatorname{s}}, \gamma_0,\omega_{\operatorname{0a}}, \omega_{\operatorname{0s}}$.

Finally, if $\gamma$ is a positive definite measure, then $\widehat{\gamma}$ is a positive measure and hence so are $(\widehat{\gamma})_{\operatorname{pp}}, (\widehat{\gamma})_{\operatorname{c}}, (\widehat{\gamma})_{\operatorname{ac}},(\widehat{\gamma})_{\operatorname{sc}}$. This gives (f) and completes the proof.
\end{proof}

\medskip 
Now, Proposition~\ref{gen ebe dist} makes the following definitions natural.

\begin{definition}
\begin{align*}
\mathcal{DTBM}_{\operatorname{s}}(\R^d) &:=\{ \omega \in \mathcal{DTBM}(\R^d)\, :\, \widehat{\omega} \mbox{ is pure point} \} \\
\mathcal{DTBM}_{\operatorname{0s}}(\R^d) &:=\{ \omega \in \mathcal{DTBM}(\R^d)\, : \, \widehat{\omega} \mbox{ is  singular continuous } \} \\
\mathcal{DTBM}_{\operatorname{0a}}(\R^d) &:=\{ \omega \in \mathcal{DTBM}(\R^d)\, : \, \widehat{\omega} \mbox{ is absolutely continuous} \} \\
\mathcal{DTBM}_{0}(\R^d) &:=\{ \omega \in \mathcal{DTBM}(\R^d)\, :\, \widehat{\omega} \mbox{ is  continuous} \}
\end{align*}
\end{definition}

As usual, we denote by $\cM^\infty_{\operatorname{pp}}(\R^d)$, $\cM^\infty_{\operatorname{c}}(\R^d)$, $\cM^\infty_{\operatorname{ac}}(\R^d)$, $\cM^\infty_{\operatorname{sc}}(\R^d)$
the spaces of translation bounded pure point measures, translation bounded continuous measures, translation bounded absolutely continuous measures and translation bounded singular continuous measures, respectively.

\begin{proposition}\label{prop: GED}
These spaces just defined have the following properties.
\begin{itemize}
\item[(a)] $\mathcal{DTBM}(\R^d) \subseteq \fWAP(\R^d)$, $\mathcal{DTBM}_{\operatorname{s}}(\R^d) \subseteq \fSAP(\R^d)$, $\mathcal{DTBM}_{0}(\R^d) \subseteq \fWAP_0(\R^d)$.
\item[(b)] $\mathcal{DTBM}(\R^d) =\mathcal{DTBM}_{\operatorname{s}}(\R^d) \oplus \mathcal{DTBM}_{0}(\R^d)$.
\item[(c)] $\mathcal{DTBM}_{0}(\R^d) =\mathcal{DTBM}_{\operatorname{0s}}(\R^d) \oplus \mathcal{DTBM}_{\operatorname{0a}}(\R^d)$.
\item[(d)] $\mathcal{DTBM}_{\operatorname{s}}(\R^d)= \mathcal{DTBM}(\R^d) \cap \fSAP(\R^d)$.
\item[(e)] $\mathcal{DTBM}_{0}(\R^d)= \mathcal{DTBM}(\R^d) \cap \fWAP_0(\R^d)$.
\item[(f)] The Fourier transform induces the following bijections:

\begin{tikzpicture}
  \matrix (m) [matrix of math nodes,row sep=2em,column sep=.4em,minimum width=2em]
  { \mathcal{DTBM}(\R^d) &= \mathcal{DTBM}_{\operatorname{s}}(\R^d) &\oplus \mathcal{DTBM}_{\operatorname{0s}}(\R^d) & \oplus \mathcal{DTBM}_{\operatorname{0a}}(\R^d) \\
    \cM^\infty(\R^d)&=\cM^\infty_{\operatorname{pp}}(\R^d)&\oplus  \cM^\infty_{\operatorname{sc}}(\R^d)&\oplus \cM^\infty_{\operatorname{ac}}(\R^d) \\};
  \path[<->]
    (m-1-1) edge node [left] {$\mathcal{F}$} (m-2-1)
    (m-1-2) edge node [left] {$\mathcal{F}$} (m-2-2)
    (m-1-3) edge node [left] {$\mathcal{F}$} (m-2-3)
    (m-1-4) edge node [left] {$\mathcal{F}$} (m-2-4);
\end{tikzpicture}

\end{itemize}
\end{proposition}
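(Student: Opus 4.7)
The plan is to derive all six items essentially from two inputs that are already available in the paper: Fact~\ref{Fact 1} (the Fourier transform is a bijection $\mathcal{DTBM}(\R^d) \leftrightarrow \cM^\infty(\R^d)$), and Theorem~\ref{t2} together with the uniqueness of the Eberlein decomposition $\fWAP(\R^d) = \fSAP(\R^d) \oplus \fWAP_0(\R^d)$ from Proposition~\ref{prop ebe}(b). The key observation throughout is that the Lebesgue decomposition of the translation bounded measure $\widehat{\omega}$ into $(\widehat{\omega})_{\operatorname{pp}}+(\widehat{\omega})_{\operatorname{sc}}+(\widehat{\omega})_{\operatorname{ac}}$ has translation bounded components by \cite[Lem.~3.12]{SS2}, so each component lies in $\cM^\infty(\R^d)$ and pulls back under $\mathcal{F}$ to an element of $\mathcal{DTBM}(\R^d)$.

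For (a), the inclusion $\mathcal{DTBM}(\R^d)\subseteq\fWAP(\R^d)$ is already noted right after the definition of $\mathcal{DTBM}(\R^d)$ (it is Theorem~\ref{t2}). For the finer inclusions, I take $\omega \in \mathcal{DTBM}_{\operatorname{s}}(\R^d)$, apply Theorem~\ref{t2} to get $\omega = \omega_{\operatorname{s}}+\omega_0$ with $\widehat{\omega_0} = (\widehat{\omega})_{\operatorname{c}} = 0$, and conclude $\omega_0 = 0$ by injectivity of the distributional Fourier transform; hence $\omega = \omega_{\operatorname{s}}\in\fSAP(\R^d)$. The proof that $\mathcal{DTBM}_0(\R^d)\subseteq\fWAP_0(\R^d)$ is symmetric, using $(\widehat{\omega})_{\operatorname{pp}}=0$.

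For (b) and (c), I use Fact~\ref{Fact 1} to transport the measure-level decompositions $\cM^\infty(\R^d) = \cM^\infty_{\operatorname{pp}}(\R^d)\oplus\cM^\infty_{\operatorname{c}}(\R^d)$ and $\cM^\infty_{\operatorname{c}}(\R^d) = \cM^\infty_{\operatorname{sc}}(\R^d)\oplus\cM^\infty_{\operatorname{ac}}(\R^d)$ to $\mathcal{DTBM}(\R^d)$; the direct-sum property on the distribution side is immediate from injectivity of $\mathcal{F}$. For (d) and (e), the inclusions $\subseteq$ are just (a). For the reverse inclusion in (d), if $\omega\in\mathcal{DTBM}(\R^d)\cap\fSAP(\R^d)$, the Eberlein splitting $\omega = \omega_{\operatorname{s}}+\omega_0$ from Proposition~\ref{prop ebe}(b) is unique, and since $\omega\in\fSAP(\R^d)$ we must have $\omega_0 = 0$; then $(\widehat{\omega})_{\operatorname{c}} = \widehat{\omega_0} = 0$, so $\widehat{\omega}$ is pure point and $\omega\in\mathcal{DTBM}_{\operatorname{s}}(\R^d)$. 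The argument for (e) is the mirror image, using $\omega_{\operatorname{s}}=0$ and $(\widehat{\omega})_{\operatorname{pp}}=0$.

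Finally, (f) is just a repackaging: combine (b), (c), and Fact~\ref{Fact 1} to read off that $\mathcal{F}$ restricts to bijections $\mathcal{DTBM}_{\operatorname{s}}\leftrightarrow\cM^\infty_{\operatorname{pp}}$, $\mathcal{DTBM}_{\operatorname{0s}}\leftrightarrow\cM^\infty_{\operatorname{sc}}$ and $\mathcal{DTBM}_{\operatorname{0a}}\leftrightarrow\cM^\infty_{\operatorname{ac}}$, which is exactly what the displayed diagram asserts. There is no genuine obstacle in this proof; the only place where care is needed is (d) and (e), where one must remember that the Eberlein decomposition at the distribution level is \emph{unique} (Proposition~\ref{prop ebe}(b)) so that membership in $\fSAP(\R^d)$ (resp.\ $\fWAP_0(\R^d)$) forces the other component to vanish, at which point Fourier inversion converts an equation at the level of distributions into a statement about the Lebesgue type of $\widehat{\omega}$.
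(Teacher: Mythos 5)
Your proof is correct, and for parts (a), (d) and (e) it takes a genuinely different route from the paper. The paper disposes of the finer inclusions $\mathcal{DTBM}_{\operatorname{s}}(\R^d)\subseteq\fSAP(\R^d)$, $\mathcal{DTBM}_{0}(\R^d)\subseteq\fWAP_0(\R^d)$ and of the reverse inclusions in (d) and (e) by citing \cite[Prop.~6.1]{ST} and \cite[Prop.~6.2]{ST}, which are essentially the statements ``$\widehat{\omega}$ pure point $\iff$ $\omega\in\fSAP(\R^d)$'' and ``$\widehat{\omega}$ continuous $\iff$ $\omega\in\fWAP_0(\R^d)$'' for distributions with measure Fourier transform. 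You instead rederive exactly these equivalences from Theorem~\ref{t2} combined with the uniqueness of the decomposition $\fWAP(\R^d)=\fSAP(\R^d)\oplus\fWAP_0(\R^d)$ and the injectivity of the distributional Fourier transform: if $\widehat{\omega}$ is pure point then $\widehat{\omega_0}=(\widehat{\omega})_{\operatorname{c}}=0$ forces $\omega_0=0$, and conversely membership in $\fSAP(\R^d)$ forces $\omega_0=0$ and hence $(\widehat{\omega})_{\operatorname{c}}=0$. This buys a self-contained argument that only uses results already quoted in the paper (Theorem~\ref{t2} and Proposition~\ref{prop ebe}(b)) at the cost of a little more writing; the paper's version is shorter but leans on two external propositions. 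Your treatment of (b), (c) and (f) — transporting the Lebesgue decomposition through Fact~\ref{Fact 1} using the translation boundedness of the components from \cite[Lem.~3.12]{SS2} — coincides with the paper's, which routes these through Proposition~\ref{gen ebe dist}.
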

\begin{proof}
(a) is a consequence of Theorem~\ref{t2}, \cite[Prop.~6.1]{ST} and \cite[Prop.~6.2]{ST}. (b) and (c) are an immediate consequence of Proposition~\ref{gen ebe dist}, Fact~\ref{Fact 1} and \cite[Lem.~3.12]{SS2}.
(d) and (e) follow from \cite[Prop.~6.1]{ST} and \cite[Prop.~6.2]{ST}, respectively. (f) is a consequence of Fact~\ref{Fact 1}.
\end{proof}

\section{On the components of the generalized Eberlein decomposition}\label{sect last}

Let $\gamma$ be the autocorrelation of $\omega\in\cM^{\infty}(\R^d)$. Then,  $\gamma \in \cM^\infty(\R^d) \cap \mathcal{DTBM}(\R^d)$.
In order to better understand the pure point, continuous, absolute continuous and singular continuous spectrum, respectively, we need a better understanding of the components
of the generalized Eberlein decomposition of $\gamma$. In particular, we need to understand the spaces $\mathcal{DTBM}_{\operatorname{s}}(\R^d)$, $\mathcal{DTBM}_{0}(\R^d)$, $\mathcal{DTBM}_{\operatorname{0s}}(\R^d)$ and $\mathcal{DTBM}_{\operatorname{0a}}(\R^d)$.

\smallskip

The subspaces $\mathcal{DTBM}_{\operatorname{s}}(\R^d)$, $\mathcal{DTBM}_{0}(\R^d)$ are characterized by Proposition~\ref{prop: GED}. Indeed, for a distribution $\omega \in \mathcal{DTBM}(\R^d)$, one has
\begin{itemize}
\item {} $\omega \in \mathcal{DTBM}_{\operatorname{s}}(\R^d)$ if and only if $f*\omega \in SAP(\R^d)$ for all $f \in \cS(\R^d)$.
\item {} $\omega \in \mathcal{DTBM}_{0}(\R^d)$ if and only if $f*\omega \in WAP_0(\R^d)$ for all $f \in \cS(\R^d)$.
\end{itemize}

While we can't constructively obtain the (restricted) Eberlein decomposition this way, it can be used to check if a given decomposition is the Eberlein decomposition.
Indeed, we have the folowing result, compare \cite[Prop.~5.8.7.]{NS11}.

\begin{lemma}
Let $\omega, \omega_1, \omega_2 \in \mathcal{DTBM}(\R^d)$ be such that $\omega=\omega_1+\omega_2$. Then, $\omega_1=\omega_{\operatorname{s}}$ and $\omega_2=\omega_{0}$ if and only if, for all $f \in \cS(\R^d)$, the following two conditions hold:
\begin{itemize}
\item{} $f*\omega_1 \in SAP(\R^d)$.
\item{} $\displaystyle \lim_{n\to\infty} \frac{1}{(2n)^d} \int_{[-n,n]^d} | (f*\omega_2)(x)|\  \dd x =0$.
\end{itemize}
\end{lemma}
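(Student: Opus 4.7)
The plan is to prove both directions by directly unpacking the definitions of $\fSAP(\R^d)$ and $\fWAP_0(\R^d)$ in terms of the convolutions $f*\omega_i$, and then invoking uniqueness of the (distributional) Eberlein decomposition from Proposition~\ref{prop ebe} (b).

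For the forward direction, assume $\omega_1 = \omega_{\operatorname{s}}$ and $\omega_2 = \omega_0$. Then $\omega_1 \in \fSAP(\R^d)$ and $\omega_2 \in \fWAP_0(\R^d)$ by definition of the Eberlein decomposition, so $f*\omega_1 \in SAP(\R^d)$ and $f*\omega_2 \in WAP_0(\R^d)$ for all $f \in \cS(\R^d)$. The first conclusion is exactly the first bullet. For the second bullet, $f*\omega_2 \in WAP_0(\R^d)$ means $M(|f*\omega_2|) = 0$, and since $f*\omega_2 \in WAP(\R^d)$ the mean exists uniformly in the center of averaging (as recalled in the preliminaries, citing \cite[Thm.~14.1]{Eb} and \cite[Prop.~4.5.9]{MoSt}). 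Specializing to the center $s = 0$ gives the desired limit.

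For the converse, assume the two bullets hold. The first bullet says precisely that $\omega_1 \in \fSAP(\R^d)$. Since by hypothesis $\omega_1 \in \mathcal{DTBM}(\R^d)$, we get $\omega_1 \in \mathcal{DTBM}(\R^d) \cap \fSAP(\R^d) = \mathcal{DTBM}_{\operatorname{s}}(\R^d)$ by Proposition~\ref{prop: GED}(d). For $\omega_2$, observe that $\omega_2 = \omega - \omega_1 \in \mathcal{DTBM}(\R^d) \subseteq \fWAP(\R^d)$ by Theorem~\ref{t2}, so $f*\omega_2 \in WAP(\R^d)$ for every $f \in \cS(\R^d)$. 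Consequently $|f*\omega_2| \in WAP(\R^d)$ as well, so the mean $M(|f*\omega_2|)$ exists uniformly in the center. The assumed limit (centered at $s=0$) therefore equals this uniform mean, forcing $M(|f*\omega_2|) = 0$. Hence $f*\omega_2 \in WAP_0(\R^d)$ for all $f \in \cS(\R^d)$, i.e.\ $\omega_2 \in \fWAP_0(\R^d)$, and so $\omega_2 \in \mathcal{DTBM}_0(\R^d)$ by Proposition~\ref{prop: GED}(e).

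Having placed $\omega_1 \in \mathcal{DTBM}_{\operatorname{s}}(\R^d) \subseteq \fSAP(\R^d)$ and $\omega_2 \in \mathcal{DTBM}_0(\R^d) \subseteq \fWAP_0(\R^d)$, the equation $\omega = \omega_1 + \omega_2$ together with the uniqueness of the decomposition in Proposition~\ref{prop ebe}(b) (the direct-sum decomposition $\fWAP(\R^d) = \fSAP(\R^d) \oplus \fWAP_0(\R^d)$) yields $\omega_1 = \omega_{\operatorname{s}}$ and $\omega_2 = \omega_0$. The only nontrivial point in the argument is the step of upgrading the one-point limit in the second bullet to the uniform mean $M(|f*\omega_2|) = 0$; this is handled by the \emph{a priori} information $\omega_2 \in \fWAP(\R^d)$, which is where membership of both $\omega$ and $\omega_1$ in $\mathcal{DTBM}(\R^d)$ is used in an essential way.
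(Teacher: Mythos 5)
Your proof is correct and follows essentially the same route as the paper: the forward direction is definitional, and the converse places $\omega_1 \in \fSAP(\R^d)$ and $\omega_2 \in \fWAP_0(\R^d)$ (using $\mathcal{DTBM}(\R^d) \subseteq \fWAP(\R^d)$ to upgrade the one-point limit to $M(|f*\omega_2|)=0$) before concluding by uniqueness of the Eberlein decomposition. The only cosmetic difference is that you invoke the direct sum $\fWAP(\R^d)=\fSAP(\R^d)\oplus\fWAP_0(\R^d)$ directly at the distribution level, whereas the paper passes through the function-level decomposition of $f*\omega$ and the fact that the Eberlein decomposition commutes with convolution.
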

\begin{proof}
$\Longrightarrow$ For all $f \in \cS(\R^d)$, we have $f*\omega_1=f*\omega_{\operatorname{s}} \in SAP(\R^d)$ and $f*\omega_2=f*\omega_{0} \in WAP_0(\R^d)$. Therefore, we have
\[
\lim_{n\to\infty} \frac{1}{(2n)^d} \int_{[-n,n]^d} |( f*\omega_2)(x) |\  \dd x =0 \,.
\]

\smallskip

\noindent $\Longleftarrow$ Let $f \in \cS(\R^d)$. Then  $f*\omega_1 \in SAP(\R^d)$. Moreover, by Proposition~\ref{gen ebe dist}(a), we have $f*\omega_2 \in WAP(\R^d)$ and hence $f*\omega_2 \in WAP_0(\R^d)$ by assumption.
Thus, $f*\omega=f*\omega_1+f*\omega_2$ is the Eberlein decomposition of $f*\omega \in WAP(\R^d)$. Therefore, by \cite[Thm.~5.2]{ST} we have
\[
f*\omega_1 =(f*\omega)_{\operatorname{s}}=f*(\omega_{\operatorname{s}}) \qquad \text{ and } \qquad
f*\omega_2 =(f*\omega)_0=f*(\omega_{0})\,.
\]
As this holds for all $f \in \cS(\R^d)$, we get $\omega_1=\omega_{\operatorname{s}}, \omega_2=\omega_{0}$.
\end{proof}

Consequently, we obtain the following simpler characterisation of $\omega \in \mathcal{DTBM}_{0}(\R^d)$. 

\begin{corollary} 
Let $\omega \in \mathcal{DTBM}(\R^d)$. Then, $\omega \in \mathcal{DTBM}_0(\R^d)$ if and only if  
\[
 \lim_{n\to\infty} \frac{1}{(2n)^d} \int_{[-n,n]^d} | (f*\omega)(x)|\  \dd x =0 
\]
for all $f \in \cS(\R^d)$. \qed
\end{corollary}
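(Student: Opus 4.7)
The plan is to deduce this directly from the preceding Lemma by applying it to the trivial decomposition $\omega = 0 + \omega$, i.e., by setting $\omega_1 := 0$ and $\omega_2 := \omega$. Both of these lie in $\mathcal{DTBM}(\R^d)$ (the zero distribution has zero Fourier transform, which is trivially a translation bounded measure), so the hypotheses of the Lemma are satisfied.

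With this choice, the first condition in the preceding Lemma, namely $f*\omega_1 \in SAP(\R^d)$ for every $f \in \cS(\R^d)$, is automatic since $f*0 = 0$. The Lemma's equivalence therefore collapses to the assertion that the limit condition
\[
\lim_{n\to\infty} \frac{1}{(2n)^d} \int_{[-n,n]^d} |(f*\omega)(x)|\ \dd x = 0 \quad \text{for all } f \in \cS(\R^d)
\]
holds if and only if $\omega_1 = \omega_{\operatorname{s}}$ and $\omega_2 = \omega_0$, which under our choice of $\omega_1, \omega_2$ is equivalent to $\omega_{\operatorname{s}} = 0$.

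To close the argument I observe that $\omega_{\operatorname{s}} = 0$ is equivalent to $\omega \in \mathcal{DTBM}_0(\R^d)$. Indeed, by Proposition~\ref{gen ebe dist} we have $\widehat{\omega_{\operatorname{s}}} = (\widehat{\omega})_{\operatorname{pp}}$, and since the Fourier transform is injective on $\mathcal{DTBM}(\R^d)$ (Fact~\ref{Fact 1}), the vanishing of $\omega_{\operatorname{s}}$ is equivalent to $(\widehat{\omega})_{\operatorname{pp}} = 0$, i.e., to $\widehat{\omega}$ being continuous, which is exactly the definition of $\omega \in \mathcal{DTBM}_0(\R^d)$.

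Since the entire argument is a direct application of the preceding Lemma via a trivial decomposition together with the definition of $\mathcal{DTBM}_0(\R^d)$, I do not foresee any technical obstacle; the substantive content is already carried by that Lemma, and the corollary amounts essentially to stripping away the unneeded $\omega_1$-component.
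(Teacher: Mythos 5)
Your proof is correct and is precisely the intended derivation: the paper states this corollary with no written proof because it follows immediately from the preceding lemma, and the natural way to extract it is exactly your choice $\omega_1=0$, $\omega_2=\omega$, together with the observation that $\omega_{\operatorname{s}}=0$ is equivalent to $(\widehat{\omega})_{\operatorname{pp}}=0$, i.e.\ to $\omega\in\mathcal{DTBM}_0(\R^d)$. All the auxiliary checks you make (that $0\in\mathcal{DTBM}(\R^d)$, that $f*0=0\in SAP(\R^d)$, and the injectivity of the Fourier transform) are valid.
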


Going further into the generalized Eberlein decomposition, the problem becomes much more complicated. The main issues is that the space $\mathcal{DTBM}_{\operatorname{0s}}(\R^d)$ is very mysterious, and there is not much we can say about it right now.

Instead, we will take a closer look at $\mathcal{DTBM}_{\operatorname{0a}}(\R^d)$. Via an application of the Riemann--Lebesgue lemma and Plancherel's theorem, we find necessary conditions and sufficient conditions for a tempered distribution $\omega\in \mathcal{DTBM}(\R^d)$ to belong in this space. Finding a necessary and sufficient condition seems to be a difficult problem, which is likely related to finding an intrinsic characterisation of the \emph{Fourier algebra}
\[
A(\widehat{\R^d})=\{ \widehat{f}\, :\, f \in L^1(\R^d) \} \,.
\]

\smallskip

Let us start with necessary conditions. Here, we follow the approach of \cite[Thm.~27]{SS3}.

\begin{proposition}\label{RL}
Let $\omega \in \mathcal{DTBM}_{\operatorname{0a}}(\R^d)$. Then, for all $f \in \cS(\R^d)$, we have $f * \omega \in  \Cz(\R^d)$.
\end{proposition}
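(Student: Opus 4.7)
The plan is to dualize to the Fourier side and apply the Riemann--Lebesgue lemma. By assumption, $\widehat{\omega}$ is an absolutely continuous translation bounded measure, so there is a locally integrable function $g$ with $\widehat{\omega}=g\,\lm$ and $g\,\lm\in\cM^{\infty}(\R^d)$.

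The main step is to show that, for every $f\in\cS(\R^d)$, the product $\widehat{f}\cdot g$ lies in $L^1(\R^d)$. Let $P(x):=(2\pi x_1)^{2d}+\ldots+(2\pi x_d)^{2d}$. Then
\begin{equation*}
\int_{\R^d}|\widehat{f}(x)\,g(x)|\,\dd x
 = \int_{\R^d}(1+P(x))|\widehat{f}(x)|\cdot\frac{|g(x)|}{1+P(x)}\,\dd x
 \leqslant \|(1+P)\widehat{f}\|_{\infty}\int_{\R^d}\frac{1}{1+P(x)}\,\dd|\widehat{\omega}|(x).
\end{equation*}
The first factor is finite because $\widehat{f}\in\cS(\R^d)$, and the second factor is finite by Lemma~\ref{lem:distr_2} applied to $\nu=\widehat{\omega}$. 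Hence $\widehat{f}\,g\in L^1(\R^d)$.

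Next, since $f\in\cS(\R^d)$ and $\omega\in\cS'(\R^d)$, the standard distributional identity gives $\widehat{f*\omega}=\widehat{f}\cdot\widehat{\omega}=\widehat{f}\,g\,\lm$ in $\cS'(\R^d)$. Applying Fourier inversion for tempered distributions yields $f*\omega=(\widehat{f}\,g)^{\vee}$, where the right-hand side is computed pointwise as an ordinary $L^1$ inverse Fourier transform since $\widehat{f}\,g\in L^1(\R^d)$. By the Riemann--Lebesgue lemma, $(\widehat{f}\,g)^{\vee}\in\Cz(\R^d)$. Because $f*\omega$ is already a continuous function (as noted in the definition of $\fWAP$ for distributions), and it agrees with $(\widehat{f}\,g)^{\vee}$ as a distribution, the two agree pointwise, so $f*\omega\in\Cz(\R^d)$.

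The argument is essentially routine once Lemma~\ref{lem:distr_2} is available; the only genuine input is the observation that translation boundedness of $\widehat{\omega}$ gives just enough integrability of the density $g$ against Schwartz test functions to place $\widehat{f}\,g$ in $L^1$. No serious obstacle arises; the bookkeeping about identifying $f*\omega$ with an $L^1$ inverse Fourier transform is standard.
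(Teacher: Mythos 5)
Your proof is correct and follows essentially the same route as the paper: both establish $\widehat{f}\,g\in L^1(\R^d)$ by weighting against $1+P$ with $P(x)=(2\pi x_1)^{2d}+\ldots+(2\pi x_d)^{2d}$ and invoking Lemma~\ref{lem:distr_2} for the translation bounded measure $\widehat{\omega}=g\,\lm$, then conclude via the Riemann--Lebesgue lemma that $f*\omega=\reallywidecheck{(\widehat{f}g)}\in\Cz(\R^d)$. The extra bookkeeping you include about identifying the distributional and pointwise inverse transforms is a harmless elaboration of what the paper leaves implicit.
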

\begin{proof}
Let $g \in L^1_{\operatorname{loc}}(\R^d)$ be such that $ \widehat{\omega}= g \lm$, and let $f \in \cS(\R^d)$. Since $\nu:= g \lm \in \cM^\infty(\R^d)$, we have
\begin{align*}
\int_{\R^d} \Big| \widehat{f}(x)\, g(x) \Big|\ \dd x
    &=\int_{\R^d} \Big| \widehat{f}(x)\, g(x) \Big|\ \dd x \\
    &\leqslant \Big\| \Big(\sum_{j=1}^d(2\pi (\cdot)_j)^{2d}+1 \Big) \widehat{f}
       \Big\|_\infty \int_{\R^d} \frac{1}{\sum_{j=1}^d(2\pi (x)_j)^{2d}+1}\
       \dd|\nu|(x)  \\
    &< \infty
\end{align*}
by Lemma~\ref{lem:distr_2}.
Therefore, $\widehat{f}\, g \in L^1(\R^d)$. The Riemann--Lebesgue lemma now gives
\[
f * \omega = \reallywidecheck{ (\widehat{f} g)} \in \Cz(\R^d) \,.
\]
\end{proof}

\begin{remark}
\begin{itemize}
\item[(a)] The converse of Proposition~\ref{RL} is not true \cite[Rem.~28]{SS3}.
\item[(b)] Let $\omega \in \cS'(\R^d)$ be such that $\widehat{\omega}$ is absolutely continuous (but not necessarily translation bounded). Then, exactly as in Proposition~\ref{RL}, one can show that $\reallywidecheck{\varphi} * \omega \in \Cz(\R^d)$, for all $\varphi \in \Cc^\infty(\R^d)$.
\end{itemize}
\end{remark}

Next, we give a sufficient condition for the Fourier transform of a tempered distribution to be a continuous measure with $L_{\text{loc}}^2(\R^d)$-density function.

\begin{proposition}\label{propL2}
Let $\omega \in \cS'(\R^d)$. Then, there exists some $f \in L^2_{\operatorname{loc}}(\R^d)$ such that $\widehat{\omega} =f \lm$ if and only if  $\reallywidecheck{\varphi}*\omega \in L^2(\R^d)$ for all $\varphi \in \Cc^\infty(\R^d)$.
\end{proposition}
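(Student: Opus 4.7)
The plan is to prove both directions by combining the distributional identity $\widehat{\reallywidecheck{\varphi}*\omega} = \varphi\,\widehat{\omega}$ with the Plancherel theorem, together with a routine localization/gluing argument for the backward direction.

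For the forward direction ($\Rightarrow$), I would start by noting that since $\reallywidecheck{\varphi}\in\cS(\R^d)$, the convolution $\reallywidecheck{\varphi}*\omega$ is a smooth tempered distribution, and taking Fourier transforms yields
\[
\widehat{\reallywidecheck{\varphi}*\omega} \;=\; \widehat{\reallywidecheck{\varphi}}\,\widehat{\omega} \;=\; \varphi\cdot f\lm \;=\;(\varphi f)\lm.
\]
Because $\varphi$ is bounded with compact support and $f\in L^2_{\operatorname{loc}}(\R^d)$, the product $\varphi f$ lies in $L^2(\R^d)$. The Plancherel theorem then gives $\reallywidecheck{\varphi}*\omega=\reallywidecheck{(\varphi f)}\in L^2(\R^d)$, which is the desired conclusion.

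For the backward direction ($\Leftarrow$), I would apply the same identity in reverse. For each $\varphi\in \Cc^\infty(\R^d)$, the assumption together with Plancherel shows that $\varphi\,\widehat{\omega}=\widehat{\reallywidecheck{\varphi}*\omega}$ is a tempered distribution represented by an $L^2$ function $f_\varphi$. The remaining work is to patch the family $(f_\varphi)_\varphi$ into a single $f\in L^2_{\operatorname{loc}}(\R^d)$ representing $\widehat{\omega}$. I would exhaust $\R^d$ by balls $U_n:=B(0,n)$, choose cut-offs $\varphi_n\in \Cc^\infty(\R^d)$ with $\varphi_n\equiv 1$ on $U_n$, and set $f_n:=f_{\varphi_n}\big|_{U_n}$. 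The consistency $f_n=f_m$ a.e.\ on $U_m$ for $m\leqslant n$ follows from a localization observation: if $\varphi_1,\varphi_2\in \Cc^\infty(\R^d)$ agree on an open set $V$, then for any $\eta\in \Cc^\infty(V)$ one has $\int \eta (f_{\varphi_1}-f_{\varphi_2})\,\dd\lm=\widehat{\omega}\bigl((\varphi_1-\varphi_2)\eta\bigr)=0$, forcing $f_{\varphi_1}=f_{\varphi_2}$ a.e.\ on $V$. This yields a well-defined $f\in L^2_{\operatorname{loc}}(\R^d)$. To verify $\widehat{\omega}=f\lm$, for any $\eta\in\Cc^\infty(\R^d)$ I would pick $n$ with $\supp(\eta)\subseteq U_n$ and compute
\[
\widehat{\omega}(\eta)=\widehat{\omega}(\varphi_n \eta)=(\varphi_n\widehat{\omega})(\eta)=\int_{\R^d}\eta\, f_{\varphi_n}\,\dd\lm=\int_{\R^d}\eta\, f\,\dd\lm,
\]
and since $\Cc^\infty(\R^d)$ is dense in $\cS(\R^d)$, this extends to all Schwartz test functions.

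The main subtlety, rather than a genuine obstacle, is bookkeeping: checking that multiplication of the tempered distribution $\widehat{\omega}$ by a $\Cc^\infty$ cut-off is compatible with viewing the product as an $L^2$ function (via the standard embedding $L^2\hookrightarrow \cS'(\R^d)$), and that the pointwise-a.e.\ equalities $f_{\varphi_1}=f_{\varphi_2}$ on overlaps are enough to produce a genuinely $L^2_{\operatorname{loc}}$ function. Both points are standard but should be spelled out to make the gluing rigorous.
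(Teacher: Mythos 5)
Your proof is correct and follows essentially the same route as the paper: the forward direction via Plancherel applied to $\varphi f\in L^2(\R^d)$, and the backward direction by taking the $L^2$-Fourier transforms $f_\varphi$ of $\reallywidecheck{\varphi}*\omega$ and gluing them into an $L^2_{\operatorname{loc}}$ density. The only cosmetic difference is that the paper glues over the disjoint unit cubes $n+[0,1)^d$, $n\in\Z^d$ (so no overlap consistency is needed), whereas you use an exhaustion by balls together with an explicit consistency check on overlaps, which is an equally valid (and arguably cleaner) bookkeeping choice.
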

\begin{proof}
 $\Longrightarrow$ Let $\varphi \in \Cc^\infty(\R^d)$, and let $K \subset \R^d$ be the support of $\varphi$. Let $g:=f \varphi$.
Then,
\[
\int_{\R^d} |g(x)|^2\ \dd x
    = \int_{K} |f(x)|^2\,|\varphi(x)|^2\ \dd x
    \leqslant \| \varphi \|_\infty^2 \int_{K} |f(x)|^2\ \dd x < \infty \,.
\]
Therefore, $g \in L^2(\R^d)$, and hence, by Plancharel's theorem, there exists some $h \in L^2(\R^d)$ such that $\widehat{h}=g$.

Now, by \cite[Thm.~2.2]{ARMA1}, $h \lm$ is Fourier transformable as a measure and $\widehat{h \lm}= g \lm$. In particular, $h \lm $ is a tempered distribution by Theorem~\ref{FT measure dist}, and as tempered distributions we have
\[
\widehat{h \lm}= g\lm= f \varphi \lm =\reallywidehat{ (\reallywidecheck{\varphi} * \omega)\lm}
\,.
\]
Therefore, the functions $h$ and $\omega * \reallywidecheck{\varphi}$ agree as tempered distributions, and hence agree almost everywhere.

Finally, $h \in L^2(\R^d)$ implies $\omega*\reallywidecheck{\varphi} \in L^2(\R^d)$
as claimed.

\medskip

\noindent $\Longleftarrow$ For each $\varphi \in \Cc^\infty(\R^d)$, we have $\omega * \reallywidecheck{\varphi} \in L^2(\R^d)$. Let $f_{\varphi} \in L^2(\R^d)$ be the $L^2$-Fourier transform of $\omega * \reallywidecheck{\varphi}$. Then, exactly as above, as tempered distributions we have
\begin{equation} \label{eq:vwo}
\varphi \widehat{\omega}= \reallywidehat{(\omega*\reallywidecheck{\varphi}) \lm }= f_{\varphi} \lm \,.
\end{equation}

Next, for each $n\in\Z^d$, fix some $\varphi_n\in\Cc(\R^d)$ such that $\varphi_n(x) =1$ for all $x\in n+[0,1)^d$. Now, define
\begin{equation} \label{eq:ffv}
f(x):= f_{\varphi_n}(x) \qquad \text{ if } x\in n+[0,1)^d \,.
\end{equation}
Then, $\varphi_n\in L^2(\R^d)$ implies $f|_{n+[0,1)^d}\in L^2(\R^d)$. As every compact set be covered by finitely many sets of the form $n+[0,1)^d$, we obtain $f\in L_{\text{loc}}^2(\R^d)$.

Finally, if $\varphi\in\Cc(\R^d)$ such that $\supp(\varphi)\subseteq n+[0,1)^d$ for some $n$, we have $\varphi=\varphi\varphi_n$, and hence by \eqref{eq:vwo} and \eqref{eq:ffv}
\[
\widehat{\omega}(\varphi) = (\varphi_n\widehat{\omega})(\varphi) = (f_{\varphi_n}\lm)(\varphi) = \int_{n+[0,1)^d} f_{\varphi_n}(x)\, \varphi(x)\ \dd x = (f\lm)(\varphi) \,.
\]

Now, via a partition of unity argument, we can write each $\phi \in \Cc^\infty(\R^d)$ as a linear combination of $\varphi \in \Cc^\infty(\R^d)$ with $\supp(\varphi) \subseteq n+[0,1)^d$  and hence $\omega = f \lm$ on $\Cc^\infty(\R^d)$. Therefore, by the density of $\Cc^\infty(\R^d)$ in $\cS(\R^d)$, the two tempered distributions agree.
\end{proof}

Cauchy--Schwarz' or H\"older's inequality imply that $L^2_{\text{loc}}(\R^d) \subseteq L^1_{\text{loc}}(\R^d)$, which immediately leads to the following sufficient conditions.

\begin{corollary}
Let $\omega \in \mathcal{DTBM}(\R^d)$. If $\widehat{\varphi}*\omega \in L^2(\R^d)$ for all $\varphi \in \Cc^\infty(\R^d)$, then $\omega \in \mathcal{DTBM}_{\operatorname{0a}}(\R^d)$. \qed
\end{corollary}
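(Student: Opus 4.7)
The plan is to reduce everything to Proposition~\ref{propL2}. First I would observe that the hypotheses stated in the corollary and in Proposition~\ref{propL2} are equivalent: since $\reallywidecheck{\varphi}(x) = \widehat{\varphi}(-x) = \widehat{\varphi^{\dagger}}(x)$ and $\Cc^\infty(\R^d)$ is closed under the involution $\varphi \mapsto \varphi^{\dagger}$, the assumption $\widehat{\varphi}*\omega \in L^2(\R^d)$ for every $\varphi \in \Cc^\infty(\R^d)$ is identical to $\reallywidecheck{\psi}*\omega \in L^2(\R^d)$ for every $\psi \in \Cc^\infty(\R^d)$.

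Having matched the hypothesis to the one of Proposition~\ref{propL2}, I would apply that proposition to produce a function $f \in L^2_{\operatorname{loc}}(\R^d)$ with $\widehat{\omega} = f\lm$ as tempered distributions. Since $\omega \in \mathcal{DTBM}(\R^d)$, we already know from the definition of this space that $\widehat{\omega}$ is a translation bounded measure; combining this with the distributional identity $\widehat{\omega} = f\lm$ gives that $f\lm$ is in fact a translation bounded measure.

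Finally, I would invoke the inclusion $L^2_{\operatorname{loc}}(\R^d) \subseteq L^1_{\operatorname{loc}}(\R^d)$ (which the authors mention right before the statement, and which follows from Cauchy--Schwarz applied on bounded sets). This ensures that $f$ is a bona fide locally integrable density, so $f\lm$ is absolutely continuous with respect to Lebesgue measure. Hence $\widehat{\omega}$ is a translation bounded absolutely continuous measure, which by definition means $\omega \in \mathcal{DTBM}_{\operatorname{0a}}(\R^d)$.

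There is really no obstacle here: the whole content is already packaged in Proposition~\ref{propL2}, and the corollary is essentially a one-line consequence combining that proposition with the trivial set-theoretic containment $L^2_{\operatorname{loc}} \subseteq L^1_{\operatorname{loc}}$ and the defining property of $\mathcal{DTBM}(\R^d)$.
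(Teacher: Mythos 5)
Your proof is correct and follows exactly the route the paper intends: the corollary is stated with a \qed precisely because it is the immediate combination of Proposition~\ref{propL2} with the inclusion $L^2_{\operatorname{loc}}(\R^d)\subseteq L^1_{\operatorname{loc}}(\R^d)$ and the fact that $\widehat{\omega}$ is already known to be a translation bounded measure. Your extra remark identifying $\reallywidecheck{\varphi}=\widehat{\varphi^{\dagger}}$ to match the hypothesis of Proposition~\ref{propL2} is a correct (and welcome) piece of bookkeeping that the paper leaves implicit.
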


\begin{corollary}\label{cor3}
Let $\omega \in \mathcal{DTBM}(\R^d)$. If $g*\omega \in L^2(\R^d)$ for all $g \in \cS(\R^d)$, then $\omega \in \mathcal{DTBM}_{\operatorname{0a}}(\R^d)$. \qed
\end{corollary}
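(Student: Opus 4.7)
The plan is to reduce Corollary~\ref{cor3} directly to the preceding corollary, whose hypothesis requires only the weaker pointwise statement that $\widehat{\varphi}*\omega \in L^2(\R^d)$ for every $\varphi \in \Cc^\infty(\R^d)$. The key observation is that the Fourier transform maps $\cS(\R^d)$ into $\cS(\R^d)$ bijectively, and $\Cc^\infty(\R^d) \subseteq \cS(\R^d)$, so for every $\varphi \in \Cc^\infty(\R^d)$ the function $\widehat{\varphi}$ is itself a Schwartz function.

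Concretely, I would fix an arbitrary $\varphi \in \Cc^\infty(\R^d)$ and set $g := \widehat{\varphi}$. Since $\varphi \in \cS(\R^d)$, we have $g \in \cS(\R^d)$. By the standing hypothesis of Corollary~\ref{cor3}, $g*\omega \in L^2(\R^d)$; but $g*\omega = \widehat{\varphi}*\omega$, which is exactly the quantity appearing in the hypothesis of the preceding corollary. Thus that corollary applies and yields $\omega \in \mathcal{DTBM}_{\operatorname{0a}}(\R^d)$.

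There is no real obstacle here: the content of the statement lies entirely in Proposition~\ref{propL2} and its corollary, and Corollary~\ref{cor3} is just the convenient reformulation where one tests against all Schwartz functions rather than only against Fourier transforms of $\Cc^\infty$ functions. The only thing worth mentioning in the proof is the invariance $\widehat{\Cc^\infty(\R^d)} \subseteq \cS(\R^d)$, which makes the specialization $g = \widehat{\varphi}$ legitimate.
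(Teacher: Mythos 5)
Your proposal is correct and matches the paper's (omitted, immediate) argument: since $\widehat{\varphi}\in\cS(\R^d)$ for every $\varphi\in\Cc^\infty(\R^d)$, the hypothesis of Corollary~\ref{cor3} is strictly stronger than that of the preceding corollary, which then gives $\omega\in\mathcal{DTBM}_{\operatorname{0a}}(\R^d)$. Nothing further is needed.
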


Consider now a Fourier transformable measure $\mu$ with uniformly discrete support. If we could strengthen Corollary~\ref{cor3} by replacing `for all $g \in \cS(\R^d)$' by `for all $g \in \Cc^\infty(\R^d)$', then a simple computation allows us to replace this condition by a much simpler restriction on the coefficients of $\mu$. This will in turn have interesting consequences for Fourier transformable measures with Meyer set support.

In order to do this, let us introduce the following definition.

\begin{definition}
A measurable function $f : \R^d \to \C$ is called \emph{Schwartz to $L^2$ compatible} if $gf \in L^2(\R^d)$ for all $g \in \cS(\R^d)$.
We denote the space of Schwartz to $L^2$ compatible functions by\footnote{We reversed the order of letters here since $SL_2$ is used for the special linear group.} $\mathcal{LS}_2(\R^d)$.

A measurable function $f : \R^d \to \C$ is called \emph{weakly Schwartz to $L^2$ compatible} if $\widehat{\varphi}f \in L^2(\R^d)$ for all $\varphi \in \Cc^\infty(\R^d)$.
We denote the space of weakly Schwartz to $L^2$ compatible functions by $\mathcal{WLS}_2(\R^d)$.
\end{definition}

Let us start with some straight forward consequences of the definition.

\begin{lemma}\label{lemma1}
\begin{itemize}
\item[(a)] $\mathcal{LS}_2(\R^d) \subseteq \mathcal{WLS}_2(\R^d) \subseteq L^2_{\operatorname{loc}}(\R^d) \subseteq L^1_{\operatorname{loc}}(\R^d)$.
\item[(b)] Let $f \in L^1_{\operatorname{loc}}(\R^d)$. If there exists some $k \in \N$ and some $2 \leqslant p \leqslant \infty$ such that $\frac{1}{(1+|\cdot|^2)^k}\, f\in L^p(\R^d)$, then $f \in \mathcal{LS}_2(\R^d)$.
\item[(c)] $L^\infty(\R^d) \subseteq \mathcal{LS}_2(\R^d)$.
\end{itemize}
\end{lemma}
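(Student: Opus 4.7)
The three parts are essentially bookkeeping once one has the right ``approximate identity'' trick for the middle inclusion in (a); I will handle (a) in three separate steps, then derive (b) by a single application of Hölder's inequality, and get (c) as a special case of (b).

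For the inclusion $\mathcal{LS}_2(\R^d) \subseteq \mathcal{WLS}_2(\R^d)$ of (a), I just observe that for $\varphi \in \Cc^\infty(\R^d) \subseteq \cS(\R^d)$ one has $\widehat{\varphi} \in \cS(\R^d)$, so the defining condition of $\mathcal{LS}_2(\R^d)$ applied with $g := \widehat{\varphi}$ is precisely the defining condition of $\mathcal{WLS}_2(\R^d)$. For $L^2_{\text{loc}}(\R^d) \subseteq L^1_{\text{loc}}(\R^d)$ I quote Cauchy--Schwarz on each compact set. The nontrivial step is $\mathcal{WLS}_2(\R^d) \subseteq L^2_{\text{loc}}(\R^d)$, which I expect to be the main (though modest) obstacle: given $f \in \mathcal{WLS}_2(\R^d)$ and a compact set $K \subset \R^d$, I need some $\varphi \in \Cc^\infty(\R^d)$ with $|\widehat{\varphi}|$ bounded away from zero on $K$. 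The idea is to use a scaled bump: fix $\psi \in \Cc^\infty(\R^d)$ with $\int \psi \, \dd\lm = 1$ and set $\psi_\varepsilon(x) := \varepsilon^{-d}\psi(x/\varepsilon)$, so that $\widehat{\psi_\varepsilon}(y) = \widehat{\psi}(\varepsilon y)$. Since $\widehat{\psi}$ is continuous with $\widehat{\psi}(0)=1$, compactness of $K$ forces $|\widehat{\psi_\varepsilon}| \geqslant \tfrac{1}{2}$ on $K$ for all sufficiently small $\varepsilon$. Then
\[
\int_K |f(x)|^2 \, \dd x \leqslant 4 \int_{\R^d} |\widehat{\psi_\varepsilon}(x)\, f(x)|^2 \, \dd x < \infty \,,
\]
where the finiteness on the right is exactly the defining property of $\mathcal{WLS}_2(\R^d)$ applied to $\varphi := \psi_\varepsilon$.

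For part (b), let $f \in L^1_{\text{loc}}(\R^d)$ with $\tfrac{1}{(1+|\cdot|^2)^k} f \in L^p(\R^d)$ for some $k \in \N$ and $2 \leqslant p \leqslant \infty$, and let $g \in \cS(\R^d)$. The plan is to factor
\[
f(x)\, g(x) \;=\; \frac{f(x)}{(1+|x|^2)^k}\cdot \bigl((1+|x|^2)^k g(x)\bigr) \,,
\]
where the first factor lies in $L^p(\R^d)$ by hypothesis, and the second factor is again a Schwartz function (multiplication by a polynomial preserves $\cS(\R^d)$) and hence lies in every $L^q(\R^d)$ for $1 \leqslant q \leqslant \infty$. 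Choosing the conjugate exponent $q$ with $\tfrac{1}{p}+\tfrac{1}{q} = \tfrac{1}{2}$ (which is admissible precisely because $p \geqslant 2$), Hölder's inequality yields $fg \in L^2(\R^d)$, proving $f \in \mathcal{LS}_2(\R^d)$.

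Finally, for part (c), if $f \in L^\infty(\R^d)$ then $f \in L^1_{\text{loc}}(\R^d)$ and $\tfrac{1}{(1+|\cdot|^2)^0}\, f = f \in L^\infty(\R^d)$, so this is simply the case $k = 0$, $p = \infty$ of (b). (Equivalently, the estimate $\|fg\|_2 \leqslant \|f\|_\infty \|g\|_2$ together with $\cS(\R^d) \subseteq L^2(\R^d)$ gives the conclusion directly.)
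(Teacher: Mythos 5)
Your proposal is correct and follows essentially the same route as the paper: the first inclusion in (a) via $\widehat{\varphi}\in\cS(\R^d)$, the middle inclusion via a test function whose Fourier transform is bounded below on a given compact set, the last via Cauchy--Schwarz, part (b) via H\"older with exponents satisfying $\tfrac{1}{p}+\tfrac{1}{q}=\tfrac{1}{2}$, and (c) as a special case of (b). The only cosmetic difference is that you construct the required test function explicitly as a scaled bump $\psi_\varepsilon$ with $|\widehat{\psi_\varepsilon}|\geqslant\tfrac12$ on $K$, where the paper simply asserts the existence of $\varphi$ with $\widehat{\varphi}\geqslant 1_K$; both suffice.
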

\begin{proof}
(a) $\mathcal{LS}_2(\R^d) \subseteq \mathcal{WLS}_2(\R^d)$ follows from the fact that $\widehat{\varphi} \in \cS(\R^d)$ for all $\varphi \in \Cc^\infty(\R^d)$.

Next, let $f \in \mathcal{WLS}_2(\R^d)$. Let $K \subseteq \R^d$ be any compact set. It is easy to see that there is some $\varphi \in \Cc^\infty(\R^d)$ such that $\widehat{\varphi} \geqslant 1_{K}$. Then,
\[
\int_{K} | f(x)|^2\ \dd x
    \leqslant   \int_{\R^d} | f(x) |^2\, |\widehat{\varphi}(x)|^2\ \dd x
    < \infty \,.
\]
This shows that $ \mathcal{WLS}_2(\R^d) \subseteq L^2_{\operatorname{loc}}(\R^d)$. As mentioned above, $L^2_{\operatorname{loc}}(\R^d) \subseteq L^1_{\operatorname{loc}}(\R^d)$
follows from Cauchy--Schwarz' or H\"older's inequality.

\medskip

\noindent (b) Let $g \in \cS(\R^d)$, and let $2 \leqslant q  \leqslant \infty$ be so that $\frac{2}{p}+\frac{2}{q}=1$. Such a $q$ exists, since $p\geqslant2$. Then, an application of H\"older's inequality gives
\begin{align*}
\| gf \|^2_2
    &= \Big\| \Big| \frac{1}{(1+|\cdot|^2)^k} f \Big|^2\,  \Big| (1+|\cdot|^2)^k
       g \Big|^2 \Big\|_1 \\
    &\leqslant \Big\| \Big| \frac{1}{(1+|\cdot|^2)^k} f \Big|^2 \Big\|_\frac{p}{2}
       \ \Big\| \Big| (1+|\cdot|^2)^k g\Big|^2 \Big\|_{\frac{q}{2}} \\
    &= \Big\|  \frac{1}{(1+|\cdot|^2)^k} f\Big\|_p\ \Big\| (1+|\cdot|^2)^k g
       \Big \|_q < \infty
\end{align*}
because of the assumption on $f$ and $g \in \cS(\R^d)$.

\medskip

\noindent (c) This follows from (b).
\end{proof}

\begin{remark}
\begin{itemize}
\item[(a)] Let
\[
f(x)=
\begin{cases}
\frac{1}{\sqrt{x}} &\mbox{ if } 0< x \leqslant 1 \,, \\
0 & \mbox{ otherwise}\,.
\end{cases}
\]
Then, $f \in L^1(\R) \subseteq L^1_{loc}(\R)$ and $f \in L^p(\R)$ for all $1 \leqslant p <2$.

Next, if $g \in \cS(\R)$ is any function such that $g \geq 1$ on $[0,1]$, then $fg \notin L^2(\R)$. This shows that $f \notin \mathcal{LS}_2(\R^d) $ and $f \notin \mathcal{WLS}_2(\R^d) $.
In particular, the condition $2 \leqslant p \leqslant \infty$ in Lemma~\ref{lemma1}(b) is sharp.

\item[(b)] Let $f \in L^1_{\operatorname{loc}}(\R^d)$ be such that $f \lm$ is a tempered measure in strong sense. Then, there exists some $n\in\N$ such that
\[
\int_{\R^d} \frac{1}{(1+|x|^2)^n}\, |f(x)|\ \dd x < \infty \,,
\]
which is the condition from Lemma~\ref{lemma1}(b) for $p=1$. Unfortunately, as pointed above, this is not enough to conclude that $f \in \mathcal{LS}_2(\R^d)$.
\end{itemize}
\end{remark}

\medskip

The next result is an immediate consequence of Lemma~\ref{lemma1}.

\begin{corollary}\label{Cor212}
We have the following equality of spaces.
\begin{align*}
L^{\infty}(\R^d)
    &=L^1_{\operatorname{loc}}(\R^d)\, \cap\, L^\infty(\R^d)
      =L^2_{\operatorname{loc}}(\R^d) \, \cap\, L^\infty(\R^d) \\
    &= \mathcal{WLS}_2(\R^d)\, \cap\, L^\infty(\R^d)= \mathcal{LS}_2(\R^d)
       \, \cap\, L^\infty(\R^d)   \,.
\end{align*}
\end{corollary}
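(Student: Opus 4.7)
The plan is to verify this by showing that $L^{\infty}(\R^d)$ is already contained in each of the four spaces $L^1_{\operatorname{loc}}(\R^d)$, $L^2_{\operatorname{loc}}(\R^d)$, $\mathcal{WLS}_2(\R^d)$, and $\mathcal{LS}_2(\R^d)$, so that intersecting with $L^\infty(\R^d)$ in each case yields $L^\infty(\R^d)$ itself. Every equality then follows trivially.

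For the first two equalities, I would simply note the standard inclusion $L^\infty(\R^d) \subseteq L^p_{\operatorname{loc}}(\R^d)$ for $p = 1, 2$: if $f \in L^\infty(\R^d)$ and $K \subset \R^d$ is compact, then $\int_K |f|^p \, \dd x \leqslant \|f\|_\infty^p \, \lm(K) < \infty$. This gives $L^\infty(\R^d) \cap L^1_{\operatorname{loc}}(\R^d) = L^\infty(\R^d)$ and $L^\infty(\R^d) \cap L^2_{\operatorname{loc}}(\R^d) = L^\infty(\R^d)$.

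For the last two equalities, I would invoke Lemma~\ref{lemma1}(c) directly to obtain $L^\infty(\R^d) \subseteq \mathcal{LS}_2(\R^d)$, and then use Lemma~\ref{lemma1}(a) which gives $\mathcal{LS}_2(\R^d) \subseteq \mathcal{WLS}_2(\R^d)$. Consequently $L^\infty(\R^d) \subseteq \mathcal{WLS}_2(\R^d)$ as well, so both $L^\infty(\R^d) \cap \mathcal{LS}_2(\R^d)$ and $L^\infty(\R^d) \cap \mathcal{WLS}_2(\R^d)$ reduce to $L^\infty(\R^d)$.

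There is no real obstacle here; the corollary is a bookkeeping consequence of Lemma~\ref{lemma1} (particularly parts (a) and (c)) together with the elementary inclusion $L^\infty \subseteq L^p_{\operatorname{loc}}$. The only thing worth emphasizing in the writeup is that the inclusions $L^\infty \subseteq \mathcal{LS}_2 \subseteq \mathcal{WLS}_2 \subseteq L^2_{\operatorname{loc}} \subseteq L^1_{\operatorname{loc}}$ established in Lemma~\ref{lemma1} are all that is being used, so the four intersections collapse simultaneously.
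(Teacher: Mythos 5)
Your proposal is correct and matches the paper's argument: the paper likewise combines Lemma~\ref{lemma1}(c) with the chain of inclusions $L^\infty(\R^d) \subseteq \mathcal{LS}_2(\R^d) \subseteq \mathcal{WLS}_2(\R^d) \subseteq L^2_{\operatorname{loc}}(\R^d) \subseteq L^1_{\operatorname{loc}}(\R^d)$ from Lemma~\ref{lemma1}(a), so that each intersection with $L^\infty(\R^d)$ collapses to $L^\infty(\R^d)$. Your direct verification of $L^\infty \subseteq L^p_{\operatorname{loc}}$ is a harmless shortcut that the paper obtains instead from the same chain.
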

\begin{proof}
Lemma~\ref{lemma1}(c) implies
\[
L^\infty(\R^d) \subseteq \mathcal{LS}_2(\R^d) \subseteq \mathcal{WLS}_2(\R^d) \subseteq L^2_{\operatorname{loc}}(\R^d) \subseteq L^1_{\operatorname{loc}}(\R^d) \,.
\]
 The claim follows.
\end{proof}

Now, we can prove the following result. Note here that the only difference between this result and Proposition~\ref{propL2}, is that the definition of $\mathcal{LS}_2(\R^d)$ and $\mathcal{WLS}_2(\R^d)$, respectively, allows us replace in the proof of Proposition~\ref{propL2} the function $\reallywidecheck{g}$ for $g \in \Cc^\infty(\R^d)$ by $h \in \cS(\R^d)$ and $\varphi \in \Cc^\infty(\R^d)$, respectively. Repeating the proof with these changes, we get the following result. As the conversion of the proof is straight forward, we skip it.

\begin{proposition}\label{T2}
Let $\omega \in \cS'(\R^d)$.
\begin{itemize}
\item[(a)] There exists some $f \in \mathcal{LS}_2(\R^d)$ such that $\widehat{\omega}=f \lm$ if and only if $h * \omega \in L^2(\R^d)$ for all $h \in \cS(\R^d)$.
\item[(b)] There exists some $f \in \mathcal{WLS}_2(\R^d)$ such that $\widehat{\omega}=f \lm$ if and only if $\varphi * \omega \in L^2(\R^d)$ for all $\varphi \in \Cc^\infty(\R^d)$.
\end{itemize}
\qed
\end{proposition}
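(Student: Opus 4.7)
The plan is to adapt the proof of Proposition~\ref{propL2}: the forward direction in both parts is an immediate Plancherel computation, while the backward direction first reduces the hypothesis to that of Proposition~\ref{propL2} via a short localization argument and then upgrades the $L^2_{\operatorname{loc}}$ density to one in $\mathcal{LS}_2$ or $\mathcal{WLS}_2$.

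For the forward direction, starting from $\widehat{\omega} = f\lm$ with $f$ in the appropriate class, I would use the tempered-distribution identity
\[
\widehat{h * \omega} = \widehat{h}\,\widehat{\omega} = \widehat{h}\,f\lm,
\]
valid for any $h\in\cS(\R^d)$. The $\mathcal{LS}_2$ condition in (a) yields $\widehat{h}f \in L^2(\R^d)$ for every $h\in\cS(\R^d)$ (since $\widehat{h}\in\cS(\R^d)$), and the $\mathcal{WLS}_2$ condition in (b) yields $\widehat{\varphi}f \in L^2(\R^d)$ for every $\varphi \in \Cc^\infty(\R^d)$. Plancherel then produces the respective convolution as an $L^2$ function.

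For the backward direction of (b), Plancherel shows that the hypothesis ``$\varphi*\omega \in L^2(\R^d)$ for all $\varphi \in \Cc^\infty(\R^d)$'' is equivalent to ``$g\,\widehat{\omega} \in L^2(\R^d)$ for every Paley--Wiener function $g = \widehat{\varphi}$.'' For a given $\psi\in\Cc^\infty(\R^d)$, I would then construct a Paley--Wiener $g$ that is continuous and nowhere zero on a neighborhood of $\supp\psi$---for instance $g = \widehat{\varphi_\epsilon}$, where $\varphi_\epsilon$ is a rescaled, modulated bump chosen so that $\widehat{\varphi_\epsilon}$ stays close to the strictly positive value $\widehat{\varphi_\epsilon}(0)$ on $\supp\psi$. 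Then $\psi/g\in\Cc^\infty(\R^d)\subseteq L^\infty(\R^d)$, and
\[
\psi\,\widehat{\omega} = (\psi/g)\,\bigl(g\,\widehat{\omega}\bigr) \in L^2(\R^d).
\]
Equivalently, $\reallywidecheck{\psi}*\omega \in L^2(\R^d)$ for every $\psi\in\Cc^\infty(\R^d)$, so Proposition~\ref{propL2} yields $\widehat{\omega}=f\lm$ with $f \in L^2_{\operatorname{loc}}(\R^d)$. Membership $f\in\mathcal{WLS}_2(\R^d)$ is then automatic, because the $L^2$-Fourier transform of $\varphi*\omega$ equals $\widehat{\varphi}f$, which is thus in $L^2(\R^d)$ by assumption.

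Part (a) follows from (b): since $\Cc^\infty(\R^d)\subseteq\cS(\R^d)$, the hypothesis of (a) implies that of (b), so $\widehat{\omega}=f\lm$ with $f \in \mathcal{WLS}_2(\R^d)$; applying the Plancherel identity to arbitrary $h\in\cS(\R^d)$ gives $\widehat{h}f = \widehat{h*\omega} \in L^2(\R^d)$, and since the Fourier transform is a bijection on $\cS(\R^d)$ this is exactly $f \in \mathcal{LS}_2(\R^d)$. The main obstacle is the Paley--Wiener localization step in (b): the original proof of Proposition~\ref{propL2} relied on cut-offs equal to $1$ on cubes, which have no Paley--Wiener counterpart; the rescaling-and-modulation construction above supplies the necessary nowhere-vanishing Paley--Wiener replacement and is the only genuinely new ingredient.
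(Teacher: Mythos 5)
Your proof is correct, and it follows the route the paper itself prescribes --- rerunning the argument of Proposition~\ref{propL2} with the new multiplier classes --- but you have rightly noticed that the conversion is not quite as ``straightforward'' as the paper's one-line justification suggests for part (b). The proof of Proposition~\ref{propL2} patches the density $f$ together from cut-offs $\varphi_n\in\Cc^\infty(\R^d)$ that are identically $1$ on the cubes $n+[0,1)^d$; in part (b) the available multipliers on the Fourier side are the Paley--Wiener functions $\widehat{\varphi}$ with $\varphi\in\Cc^\infty(\R^d)$, and no such function can equal $1$ on a cube (being analytic, it would then be constant). Your fix --- choosing $\varphi_\epsilon$ so that $\widehat{\varphi_\epsilon}$ is bounded away from zero on a neighbourhood of the relevant compact set and writing $\psi\widehat{\omega}=(\psi/\widehat{\varphi_\epsilon})\,(\widehat{\varphi_\epsilon}\widehat{\omega})$ --- supplies exactly the missing localization and correctly reduces (b) to Proposition~\ref{propL2}; the subsequent observation that $f\in\mathcal{WLS}_2(\R^d)$ is then automatic (its product with each $\widehat{\varphi}$ is the Plancherel transform of $\varphi*\omega\in L^2(\R^d)$) is also right. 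For part (a) you deviate mildly from the paper by deducing it from (b) together with the surjectivity of the Fourier transform on $\cS(\R^d)$, rather than repeating the cube argument with the Schwartz multipliers $h_n=\reallywidecheck{\varphi_n}$ (which does work verbatim, since a Schwartz function on the Fourier side can be $1$ on a cube); your version is equally valid and avoids duplicating the patching step. The forward directions are the same Plancherel computation in both treatments.
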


\medskip
We can now prove the following result, which has interesting consequences for measures with Meyer sets support.

\begin{theorem}\label{T4} Let $\Lambda \subset \R^d$ be a uniformly discrete set, and let
\[
\mu= \sum_{x \in \Lambda} c_x \delta_x
\]
be a tempered measure. Then, the following statements are equivalent.
\begin{itemize}
\item[(i)] There exists some $f \in \mathcal{WLS}_2(\R^d)$ such that $\widehat{\mu}=f \lm$.
\item[(ii)] $\varphi * \mu \in L^2(\R^d)$ for all $\varphi \in \Cc^\infty(\R^d)$.
\item[(iii)] $\sum_{x \in \Lambda } |c_x|^2 < \infty$.
\end{itemize}
\end{theorem}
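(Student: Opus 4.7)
The plan is to observe that the equivalence \textup{(i)}$\Longleftrightarrow$\textup{(ii)} is immediate from Proposition~\ref{T2}(b), which characterises exactly the distributions whose Fourier transform is a density in $\mathcal{WLS}_2(\R^d)$. So the substance of the proof is to establish \textup{(ii)}$\Longleftrightarrow$\textup{(iii)}, and both directions will exploit the uniform discreteness of $\Lambda$, which decouples the local behaviour of $\varphi*\mu$.

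For \textup{(ii)}$\Longrightarrow$\textup{(iii)}, I would fix an open neighbourhood $U$ of $0$ witnessing the uniform discreteness, i.e.\ such that $(x+U)\cap (y+U) = \varnothing$ for distinct $x,y \in \Lambda$. Choose a nonzero $\varphi \in \Cc^\infty(\R^d)$ with $\supp(\varphi) \subseteq \tfrac{1}{2}U$, so that the translates $x + \supp(\varphi)$ are pairwise disjoint. Then the expansion $(\varphi*\mu)(y) = \sum_{x \in \Lambda} c_x\, \varphi(y-x)$ has pairwise disjoint supports in $y$, and hence
\[
\|\varphi*\mu\|_2^2 \;=\; \sum_{x \in \Lambda} |c_x|^2\, \|\varphi\|_2^2.
\]
Since $\varphi*\mu \in L^2(\R^d)$ by hypothesis and $\|\varphi\|_2 > 0$, this forces $\sum_{x \in \Lambda} |c_x|^2 < \infty$.

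For \textup{(iii)}$\Longrightarrow$\textup{(ii)}, let $\varphi \in \Cc^\infty(\R^d)$ with compact support $K$. Uniform discreteness of $\Lambda$ yields a finite constant $N := \sup_{y \in \R^d} \#(\Lambda \cap (y - K)) < \infty$, obtained by covering $K$ by finitely many translates of a packing neighbourhood of $0$. For each $y \in \R^d$, the sum $(\varphi*\mu)(y) = \sum_{x \in \Lambda \cap (y-K)} c_x\, \varphi(y-x)$ has at most $N$ nonzero terms, so the Cauchy--Schwarz inequality (applied to the counting measure on this finite index set) gives
\[
|(\varphi*\mu)(y)|^2 \;\leqslant\; N \sum_{x \in \Lambda} |c_x|^2\, |\varphi(y-x)|^2.
\]
Integrating in $y$ yields $\|\varphi*\mu\|_2^2 \leqslant N\, \|\varphi\|_2^2 \sum_{x \in \Lambda} |c_x|^2 < \infty$, which establishes \textup{(ii)}.

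There is no serious obstacle here: uniform discreteness cleanly separates the contributions of distinct $x \in \Lambda$ in both directions, and everything reduces to elementary $L^2$ bookkeeping. The only minor point to verify is that the packing constant $N$ is indeed finite, which follows directly from the definition of uniform discreteness together with compactness of $K$.
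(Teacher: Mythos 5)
Your proof is correct and follows the same overall strategy as the paper: the equivalence (i)$\iff$(ii) is delegated to Proposition~\ref{T2}(b), and the equivalence with (iii) is extracted from the disjointness of translates forced by uniform discreteness. The only genuine difference is in (iii)$\implies$(ii): the paper splits an arbitrary $\varphi$ by a partition of unity into pieces $\varphi_j$ supported inside translates of $U$, so that $(\mu*\varphi_j)(y)$ has at most one nonzero term for each $y$ and $\|\mu*\varphi_j\|_2^2$ can be computed exactly via Fubini, whereas you keep $\varphi$ whole, bound the number of points of $\Lambda$ in $y-\supp(\varphi)$ by a packing constant $N$, and apply Cauchy--Schwarz to get $\|\varphi*\mu\|_2^2 \leqslant N\,\|\varphi\|_2^2 \sum_{x\in\Lambda}|c_x|^2$. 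Both routes are elementary; yours trades the partition of unity for the (equally routine) verification that $N<\infty$. In (ii)$\implies$(iii) the two arguments are essentially identical --- the paper bounds $|(\mu*\varphi)(y)|^2$ from below by $\sum_x |c_x|^2\, 1_K(y-x)$ for a test function with $\varphi\geqslant 1_K$, while you compute $\|\varphi*\mu\|_2^2$ exactly; in either version you should say explicitly that the interchange of sum and integral is justified by monotone convergence (the paper does this), and note that $\supp(\varphi)\subseteq U$ already suffices for the disjointness of the translates, so the factor $\tfrac12$ is unnecessary (and $\tfrac12 U\subseteq U$ need not even hold for a general neighbourhood $U$ unless one first shrinks $U$ to a ball).
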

\begin{proof}
(i)$\iff$(ii): This is a consequence of Proposition~\ref{T2}.

\medskip

\noindent (ii)$\implies$ (iii): Let $U \subseteq \R^d$ be such that $\Lambda$ is $U$-uniformly discrete. Pick $\varphi \in \Cc^\infty(\R^d)$ such that $\supp(\varphi) \subseteq U$ and $\phi \geqslant 1$ on some compact set $K \subseteq U$ of positive Lebesgue measure. Then, by \cite[Lem. 5.8.3]{NS11}, we have
\[
| \mu* \varphi |= | \mu | * |\varphi| \,.
\]
Moreover, by the $U$-uniform discreteness of $\Lambda$ (compare the proof of \cite[Lem. 5.8.3]{NS11}), for each $y\in \R^d$, there exists at most one $x \in\Lambda$ such that
$|\varphi(y-x)| \neq 0$. This immediately gives
\[
| (\mu* \varphi) (y)|^2 =\sum_{x \in \Lambda} |c_x|^2\, |\varphi(y-x)|^2 \geqslant \sum_{x \in \Lambda} |c_x|^2\, |1_{K}(y-x)|  \qquad \text{ for all } y \in \Lambda
\,.
\]
Note here that if $\Lambda$ is finite then,
\[
\lambda(K) \sum_{x \in \Lambda} |c_x|^2 = \sum_{x \in \Lambda} |c_x|^2 \int_{\R^d} |1_{K}(y-x)|\ \dd y =\int_{\R^d} \sum_{x \in \Lambda} |c_x|^2 \,  |1_{K}(y-x)|\ \dd y  \,.
\]
On another hand, if $\Lambda$ is infinite, it is countable by uniform discreteness. Let $\{ x_n \}$ be an enumeration of $\Lambda$. Then, the sequence of functions
$f_n:=\sum_{k=1}^n |c_{x_k}|^2 |1_{K}(\cdot-x_k)| \in L^1(\R^d)$ is increasing, and hence
\[
\lambda(K) \sum_{x \in \Lambda} |c_x|^2
    = \lim_{n\to\infty} \int_{\R^d} f_n(y)\ \dd y
    = \int_{\R^d} \lim_{n\to\infty} f_n(y)\ \dd y
    = \int_{\R^d} \left( \sum_{x \in \Lambda} |c_x|^2\, |1_{K}(y-x)| \right) \dd y \,,
\]
where we applied the monotone convergence theorem.
Therefore, in both cases, we obtain
\[
\lambda(K) \sum_{x \in \Lambda} |c_x|^2
    = \int_{\R^d} \left( \sum_{x \in \Lambda} |c_x|^2  |1_{K}(y-x)| \right) \dd y
    \leqslant  \int_{\R^d} | (\mu* \varphi)(y)|^2\  \dd y < \infty  \,.
\]

\smallskip

\noindent (iii)$\implies$(ii): Note first that (iii) implies that
\[
\nu:= \sum_{x \in \Lambda} |c_x|^2 \delta_x
\]
is a finite positive measure on $\R^d$.

Let again $U \subseteq \R^d$ be such that $\Lambda$ is $U$-uniformly discrete. Let $\varphi \in \Cc^\infty(\R^d)$.
By a standard partition of unity argument, there exist $\varphi_1,\ldots,\varphi_k \in \Cc^\infty(\R^d)$ and $y_1,\ldots,y_k \in \R^d$ such that
\[
 \varphi =\sum_{j=1}^k\varphi_j \qquad \text{ with } \qquad
  \supp(\varphi_j) \subseteq y_j+U \ \text{ for all } 1 \leqslant j \leqslant k \,.
\]
By \cite[Lem.~5.8.3]{NS11} and its proof, we have
\[
| (\mu* \varphi_j)(y)| = (| \mu | * |\varphi_j|)(y) =\sum_{x \in \Lambda} |c_x|\, |\varphi_j(y-x)|
\]
with $|c_x||\varphi_j(y-x)|=0$,  for all $1 \leqslant j \leqslant k$ and all $y \in \R^d$, for all but at most one $x \in \Lambda$. In particular, this implies
\[
| (\mu* \varphi_j)(x) |^2  =\sum_{x \in \Lambda} |c_x|^2\, |\varphi_j(y-x)|^2 = \int_{\R^d} |\varphi_j(y-x)|^2\ \dd \nu(x)
\]
for all $1 \leqslant j \leqslant k$ and $y \in \R^d$.
Therefore, an application of Fubini's theorem gives
\begin{align*}
\int_{\R^d} | (\mu* \varphi_j)(y) |^2 \dd y
    &=\int_{\R^d} \int_{\R^d} |\varphi_j(y-x)|^2\ \dd \nu(x)\, \dd y\\
    &=\int_{\R^d} \int_{\R^d} |\varphi_j(y-x)|^2\ \dd y\ \dd \nu(x)
     =\| \varphi_j\|_2^2\int_{\R^d} 1\ \dd \nu(x) \\
    &=\| \varphi_j\|_2^2\,  \sum_{x \in \Lambda} |c_x|^2  < \infty \,.
\end{align*}
This completes the proof.
\end{proof}

The next corollary is an immediate consequence.

\begin{corollary}\label{cor234}
Let $\Lambda \subset \R^d$ be a uniformly discrete set, and let
\[
\mu= \sum_{x \in \Lambda} c_x \delta_x
\]
be a tempered measure such that $\sum_{x \in \Lambda } |c_x|^2 < \infty$. Then, its Fourier transform in the tempered distribution sense is an absolutely continuous measure.

In particular, if $\mu$ is Fourier transformable as a measure, $\widehat{\mu}$ is absolutely continuous.
\end{corollary}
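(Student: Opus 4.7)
The plan is to read this as a direct corollary of Theorem~\ref{T4}, so the proof should be quite short and the challenge is mainly organisational (identifying which statement of Theorem~\ref{T4} to invoke and then promoting the tempered-distribution result to a measure-Fourier-transform result).

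First, I would observe that the hypothesis $\sum_{x\in\Lambda}|c_x|^2<\infty$ is precisely condition (iii) of Theorem~\ref{T4}. Since that theorem requires $\mu$ to be a tempered measure supported on the uniformly discrete set $\Lambda$, which is exactly what we have, the equivalence (iii)$\iff$(i) yields the existence of some $f\in\mathcal{WLS}_2(\R^d)$ with $\widehat{\mu}=f\lm$ in the sense of tempered distributions.

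Next, I would use Lemma~\ref{lemma1}(a) to note $\mathcal{WLS}_2(\R^d)\subseteq L^2_{\operatorname{loc}}(\R^d)\subseteq L^1_{\operatorname{loc}}(\R^d)$. Thus $f$ is locally integrable, and hence $f\lm$ is a genuine (absolutely continuous) measure on $\R^d$. This shows that the Fourier transform of $\mu$ in the tempered distribution sense is represented by an absolutely continuous measure, which is the first assertion.

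For the final "in particular" clause, I would appeal to Theorem~\ref{FT measure dist}: if $\mu$ is Fourier transformable as a measure, then its measure Fourier transform coincides with the distributional Fourier transform $f\lm$ just produced. Therefore $\widehat{\mu}$ (in the measure sense) equals $f\lm$, which is absolutely continuous. There is no real obstacle; the only thing to be slightly careful about is that Theorem~\ref{T4} already delivers an $\mathcal{WLS}_2$-density (rather than merely an $L^1_{\operatorname{loc}}$ one), so we immediately get absolute continuity without any further argument, and no additional translation-boundedness hypothesis on $\mu$ is needed at this step.
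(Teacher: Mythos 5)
Your proof is correct and follows exactly the route the paper intends: the paper presents this corollary as an immediate consequence of Theorem~\ref{T4} (via (iii)$\implies$(i), the inclusion $\mathcal{WLS}_2(\R^d)\subseteq L^1_{\operatorname{loc}}(\R^d)$ from Lemma~\ref{lemma1}, and Theorem~\ref{FT measure dist} for the measure-sense statement), and gives no further argument. Nothing is missing.
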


\medskip

Fourier transformable measures with Meyer set support are of special interest in some branches of mathematics like Aperiodic Order. Therefore, we are going to list some consequences for this special class of measures.
To do so, we will recall the following result.

\begin{theorem}\label{T5}\cite[Thm.~4.1]{NS20a} \cite[Thm.~5.7]{NS21}
Let $\mu$ be a Fourier transformable measure supported inside a Meyer set $\Gamma \subset \R^d$. Then, there exists some Meyer set $\Lambda \subset \R^d$ and some function $c: \Lambda \to \C$ such that
\[
\gamma_{\operatorname{0a}}= \sum_{x \in \Lambda} c(x)\, \delta_x \,.
\]  \qed
\end{theorem}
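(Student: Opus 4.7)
The plan is to exploit the cut-and-project structure of Meyer sets together with the distributional generalized Eberlein decomposition obtained in Theorem~\ref{coro:mainrd}. Interpreting $\gamma$ as the autocorrelation of $\mu$, the assumption $\supp(\mu)\subseteq\Gamma$ yields that $\gamma$ is a positive definite Fourier transformable measure supported on the uniformly discrete Meyer set $\Gamma-\Gamma$.

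First, I would invoke the structural fact that every Meyer set embeds into a model set: there exists a cut-and-project scheme $(\R^d, H, \mathcal{L})$ and a relatively compact window $W\subset H$ such that $\Gamma\subseteq\oplam(W)$, and consequently $\Gamma-\Gamma\subseteq\oplam(W-W)$. Applying Theorem~\ref{coro:mainrd} to $\gamma$ produces
\[
\gamma = \gamma_{\operatorname{s}} + \omega_{\operatorname{0a}} + \omega_{\operatorname{0s}}
\]
as tempered distributions of order at most $2d$, with $\widehat{\omega_{\operatorname{0a}}}= (\widehat{\gamma})_{\operatorname{ac}}$. The task is then to upgrade $\omega_{\operatorname{0a}}$ from a distribution to a pure point measure supported on a Meyer set $\Lambda$.

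Next, I would take $\Lambda$ to be a Meyer set containing the support of $\gamma$ (for instance $\Lambda = \Gamma-\Gamma$, possibly enlarged to $\oplam(W-W)$), and use uniform discreteness to localize $\omega_{\operatorname{0a}}$: for each $x\in\Lambda$, pick a smooth cutoff $\varphi_x \in \Cc^\infty(\R^d)$ with $\varphi_x(x)=1$, supported in a small neighborhood of $x$ avoiding the rest of $\Lambda$, and set $c(x) := \omega_{\operatorname{0a}}(\varphi_x)$. The key claims to verify are (i) that $c(x)$ is well-defined independent of the choice of $\varphi_x$, (ii) that $\omega_{\operatorname{0a}}$ restricted to $\R^d\setminus\Lambda$ vanishes as a distribution, and (iii) that the resulting sum $\sum_{x\in\Lambda}c(x)\delta_x$ reproduces $\omega_{\operatorname{0a}}$ in $\cS'(\R^d)$. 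Step (ii) is the crux: since $\gamma$ itself is supported on $\Lambda$, one must show that the subtraction $\omega_{\operatorname{0a}}=\gamma-\gamma_{\operatorname{s}}-\omega_{\operatorname{0s}}$ does not spread the support, using the cut-and-project Fourier duality to control the supports of $\gamma_{\operatorname{s}}$ and $\omega_{\operatorname{0s}}$ spectrally.

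The main obstacle is precisely controlling the support of the individual components: while $\gamma_{\operatorname{s}}$ is strongly almost periodic, its support need not sit inside $\Lambda$, so one has to argue that the ``spill'' of $\gamma_{\operatorname{s}}$ outside $\Lambda$ is exactly cancelled by $\omega_{\operatorname{0s}}$ rather than by $\omega_{\operatorname{0a}}$. This delicate separation is what requires the full strength of the cut-and-project analysis in \cite{NS20a,NS21}: the absolutely continuous component of $\widehat{\gamma}$ is identified with a concrete $L^1$-density obtained from the window function, whose inverse Fourier transform can be shown by direct computation to live on $\Lambda$. Once the support is confined to $\Lambda$, square-summability of the coefficients $c(x)$ follows from Theorem~\ref{T4} applied in the reverse direction to the absolutely continuous Fourier transform $(\widehat{\gamma})_{\operatorname{ac}}$, completing the proof.
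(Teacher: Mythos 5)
First, note that the paper does not prove this statement at all: it is imported verbatim from \cite{NS20a} and \cite{NS21} (hence the immediate \qed), so there is no internal proof to compare against. Judged on its own, your proposal is a plan rather than a proof, and the decisive steps are missing. The central claim --- that $\omega_{\operatorname{0a}}$ is supported inside a Meyer set, i.e.\ that the ``spill'' of $\gamma_{\operatorname{s}}$ outside the support of $\gamma$ is cancelled by $\omega_{\operatorname{0s}}$ and not by $\omega_{\operatorname{0a}}$ --- is exactly the content of the theorem, and you defer it to ``the full strength of the cut-and-project analysis in \cite{NS20a,NS21}'', which is circular: those are the references whose result you are trying to reprove. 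There is no ``concrete $L^1$-density obtained from the window function'' in this generality; $\mu$ is an arbitrary Fourier transformable measure with Meyer set support, and the identification of $(\widehat{\mu})_{\operatorname{ac}}$ requires the whole almost-periodicity machinery of those papers, not a direct computation.

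Two further gaps would remain even if support in a Meyer set $\Lambda$ were granted. First, Theorem~\ref{coro:mainrd} only gives that $\omega_{\operatorname{0a}}$ is a tempered distribution of order at most $2d$; a distribution of positive order supported on a uniformly discrete set may contain derivatives $\partial^{\alpha}\delta_x$ up to that order, and your steps (i)--(iii) never exclude such terms, so the conclusion that $\omega_{\operatorname{0a}}$ is a pure point \emph{measure} does not follow from localization alone. Second, the closing claim that square-summability of the $c(x)$ ``follows from Theorem~\ref{T4} applied in the reverse direction'' is both unnecessary (Theorem~\ref{T5} asserts nothing about summability) and false as stated: the implication from absolute continuity of $\widehat{\mu}$ to $\sum_x |c_x|^2<\infty$ requires the density to lie in $\mathcal{WLS}_2(\R^d)$, which is an additional hypothesis --- this is precisely why Theorem~\ref{T6}(c) assumes $f\in L^\infty(\R^d)$. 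Finally, a smaller point: in context (compare Theorem~\ref{T6}(a), where the same object is written $\mu_{\operatorname{0a}}$) the theorem concerns the generalized Eberlein component of $\mu$ itself, not of its autocorrelation, so introducing $\Gamma-\Gamma$ is a misreading, albeit one that does not change the nature of the difficulty.
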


By combining this result with Theorem~\ref{T4} and \cite[Cor.~35]{SS3}, we obtain the following consequence.

\begin{theorem}\label{T6}
Let $\mu$ be a Fourier transformable measure supported inside a Meyer set $\Gamma \subset \R^d$. Then, there exists some Meyer set $\Lambda \subset \R^d$ and some bounded function $c: \Lambda \to \C$ such that
\begin{itemize}
\item[(a)] $\mu_{\operatorname{0a}}= \sum_{x \in \Lambda} c(x)\, \delta_x$.
\item[(b)] $\lim_{ \|x \| \to \infty } |c(x)| =0$.
\item[(c)] If there exists some $f \in L^1_{\operatorname{loc}}(\R^d) \cap L^\infty(\R^d)$ such that $\left(\widehat{\mu}\right)_{\operatorname{ac}}=f \lm$, then
\[
\sum_{x \in \Lambda} |c(x)|^2 < \infty \,.
\]
\end{itemize}
\end{theorem}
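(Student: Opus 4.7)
The plan is to combine Theorem~\ref{T5} with the bump function technique and Proposition~\ref{RL} to handle (a) and (b), and then combine Corollary~\ref{Cor212} with Theorem~\ref{T4} to handle (c). First, apply Theorem~\ref{T5} to $\mu$ to obtain a Meyer set $\Lambda \subset \R^d$ and a function $c: \Lambda \to \C$ such that
\[
\mu_{\operatorname{0a}}=\sum_{x \in \Lambda} c(x)\, \delta_x \,,
\]
which is (a). Note that $\mu_{\operatorname{0a}} \in \mathcal{DTBM}_{\operatorname{0a}}(\R^d)$, and in particular $\mu_{\operatorname{0a}} \in \cM^\infty(\R^d)$ by Proposition~\ref{prop: GED}(f).

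Next, I would prove boundedness and the decay in (b) simultaneously. Since $\Lambda - \Lambda$ is uniformly discrete, one can fix an open symmetric neighborhood $U$ of $0$ with $(x+U)\cap (y+U) =\varnothing$ for all distinct $x,y \in \Lambda$. Pick $\varphi \in \Cc^\infty(\R^d)$ with $\supp(\varphi) \subseteq U$ and $\varphi(0)=1$. Using the uniform discreteness of $\Lambda$, for each $x_0 \in \Lambda$ and each $y \in x_0 + U$, the sum
\[
(\varphi * \mu_{\operatorname{0a}})(y) = \sum_{x \in \Lambda} c(x)\, \varphi(y-x)
\]
collapses to the single term $c(x_0)\, \varphi(y-x_0)$; in particular, $(\varphi * \mu_{\operatorname{0a}})(x_0) = c(x_0)$. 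Since $\mu_{\operatorname{0a}} \in \mathcal{DTBM}_{\operatorname{0a}}(\R^d)$ and $\varphi \in \cS(\R^d)$, Proposition~\ref{RL} gives $\varphi * \mu_{\operatorname{0a}} \in \Cz(\R^d)$, so $c$ is bounded on $\Lambda$ and $|c(x_0)| \to 0$ as $\|x_0\| \to \infty$, which proves (b).

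Finally, for (c), assume $(\widehat{\mu})_{\operatorname{ac}} = f\lm$ with $f \in L^1_{\operatorname{loc}}(\R^d) \cap L^\infty(\R^d)$. By Theorem~\ref{coro:mainrd}(d), we have $\widehat{\mu_{\operatorname{0a}}} = (\widehat{\mu})_{\operatorname{ac}} = f\lm$. Corollary~\ref{Cor212} yields $L^\infty(\R^d) \subseteq \mathcal{LS}_2(\R^d) \subseteq \mathcal{WLS}_2(\R^d)$, and since $\Lambda$ is in particular uniformly discrete, I can apply the implication (i)$\implies$(iii) of Theorem~\ref{T4} to $\mu_{\operatorname{0a}}$ to conclude $\sum_{x \in \Lambda} |c(x)|^2 < \infty$. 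The only point that requires some care is verifying that the convolution $\varphi * \mu_{\operatorname{0a}}$ indeed collapses to a single term on each cluster $x_0+U$; this is a standard argument using the uniform discreteness of $\Lambda$ as in the proof of \cite[Lem.~5.8.3]{NS11}, and it is the place where the Meyer set hypothesis (through uniform discreteness of $\Lambda$) is genuinely used in (b).
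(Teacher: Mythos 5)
Your proposal is correct, and parts (a) and (c) coincide with the paper's proof: (a) is exactly Theorem~\ref{T5}, and (c) is exactly the chain Corollary~\ref{Cor212} $\Rightarrow f\in\mathcal{WLS}_2(\R^d)$ followed by Theorem~\ref{T4}, (i)$\implies$(iii). The one place you diverge is (b): the paper simply cites \cite[Cor.~35]{SS3}, whereas you give a self-contained argument, evaluating $\varphi*\mu_{\operatorname{0a}}$ at the points of $\Lambda$ with a bump function $\varphi$ supported in a separating neighbourhood $U$ and invoking Proposition~\ref{RL} to get $\varphi*\mu_{\operatorname{0a}}\in \Cz(\R^d)$, hence $c(x_0)=(\varphi*\mu_{\operatorname{0a}})(x_0)\to 0$. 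This is a legitimate and arguably preferable route, since it keeps the proof internal to the paper (it is essentially the Riemann--Lebesgue mechanism behind the cited corollary); the paper's citation buys brevity, yours buys self-containedness. One small inaccuracy worth fixing: you justify $\mu_{\operatorname{0a}}\in\cM^\infty(\R^d)$ by Proposition~\ref{prop: GED}(f), but that statement only tells you that $\widehat{\mu_{\operatorname{0a}}}=(\widehat{\mu})_{\operatorname{ac}}$ is a translation bounded measure, not that $\mu_{\operatorname{0a}}$ itself is one (elements of $\mathcal{DTBM}_{\operatorname{0a}}(\R^d)$ need not be measures). Fortunately your argument never uses translation boundedness of $\mu_{\operatorname{0a}}$: what you actually need is that $\mu_{\operatorname{0a}}$ is the measure $\sum_{x\in\Lambda}c(x)\delta_x$ (Theorem~\ref{T5}) and that $\mu_{\operatorname{0a}}\in\mathcal{DTBM}_{\operatorname{0a}}(\R^d)$ so that Proposition~\ref{RL} applies; both hold, so the slip is cosmetic. (In fact, once (b) is established, boundedness of $c$ together with uniform discreteness of $\Lambda$ does give $\mu_{\operatorname{0a}}\in\cM^\infty(\R^d)$ a posteriori.)
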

\begin{proof} (a) This follows from Theorem~\ref{T5}.

\medskip

\noindent (b) This is a consequence of \cite[Cor.~35]{SS3}.

\medskip

\noindent (c) By Corollary~\ref{Cor212}, we have $f \in \mathcal{WLS}_2(\R^d)$. The claim follows now from Theorem~\ref{T4}.
\end{proof}

\section*{Acknowledgements}

This work was supported by the German Research Foundation (DFG) via research grant 415818660 as well as by the CRC 1283 at Bielefeld University
(TS), and by the Natural Sciences and Engineering Council of Canada
(NSERC), via grant 2020-00038 (NS).

\end{document}